\newcommand{\beq}{\begin{equation}}
\newcommand{\eeq}{\end{equation}}
\newcommand{\bea}{\begin{eqnarray}}
\newcommand{\eea}{\end{eqnarray}}
\newcommand{\beas}{\begin{eqnarray*}}
\newcommand{\eeas}{\end{eqnarray*}}
\newtheorem{theorem}{Theorem}[section]
\newtheorem{assumption}[theorem]{Assumption}
\newtheorem{proposition}[theorem]{Proposition}
\newtheorem{lemma}[theorem]{Lemma}
\newtheorem{remark}[theorem]{Remark}
\newtheorem{example}[theorem]{Example}
\newtheorem{examples}[theorem]{Examples}
\newtheorem{foo}[theorem]{Remarks}
\def\msh{{\mathscr H}}
\def\me{{\mathbb  E}}
\def\md{{\mathbb D}}
\def\mr{{\mathbb  R}}
\def\mp{{\mathbb  P}}
\numberwithin{equation}{section}
\title[]{Small-time kernel expansion for solutions of stochastic differential equations driven by fractional Brownian motions}
\author{Fabrice Baudoin}
\address{Department of Mathematics\\
       Purdue University \\ West Lafayette, IN 47907}
\email{fbaudoin\@@math.purdue.edu}
\thanks{The first author of this research was supported in part by
NSF Grant DMS 0907326}
\author{Cheng Ouyang}
\address{Department of Mathematics\\
       Purdue University \\ West Lafayette, IN 47907}
\email{couyang\@@math.purdue.edu}
\subjclass{28D05, 60D58}
\begin{document}
\maketitle

\begin{abstract}

The goal of this paper is to show that under some assumptions, for a $d$-dimensional fractional Brownian motion with Hurst parameter $H>1/2$, the density of the solution of the stochastic differential equation
\[
X^{x}_t =x + \sum_{i=1}^d \int_0^t V_i
(X^{x}_s) dB^i_s,
\] 
 admits the following asymptotics in small times
\[
p(t;x,y)=\frac{1}{(t^H)^{d}}e^{-\frac{d^2(x,y)}{2t^{2H}}}\bigg(\sum_{i=0}^N c_i(x,y)t^{2iH}+O(t^{ 2(N+1)H})\bigg).
\]
\end{abstract}

\tableofcontents

\section{Introduction}

In this paper, we are interested in the study in small times of
stochastic differential equations on $\mathbb{R}^d$
\begin{equation}
\label{SDEyoung_intro} X^{x}_t =x + \sum_{i=1}^d \int_0^t V_i
(X^{x}_s) dB^i_s
\end{equation}
where $V_i$'s are $C^{\infty}$-bounded vector fields on
$\mathbb{R}^d$ and $B$ is a $d$-dimensional fractional Brownian
motion with Hurst parameter $H >1/2$. 
Since $H>1/2$, the integrals
$\int_0^t V_i (X^{x}_s) dB^i_s$
are understood in the sense of Young's integration (see \cite{Yo}
and \cite{Za}), and it is known (see by e.g. \cite{rascanu}) that an equation like (\ref{SDEyoung_intro}) has one and only one solution. Moreover if for every $x \in \mathbb{R}^d$, the vectors $V_1(x),\cdots,V_d(x)$ form a basis of $\mathbb{R}^d$, then this solution has for every $t>0$, a smooth density with respect to the Lebesgue measure (see \cite{baudoin-hairer} and \cite{saus}).

\

Our main result is the following:

\begin{theorem}\label{mainT}
Let us assume that:
\begin{itemize}
\item For every $x \in \mathbb{R}^d$, the vectors $V_1(x),\cdots,V_d(x)$ form a basis of $\mathbb{R}^d$.
\item There exist smooth and bounded functions $\omega_{ij}^l$ such that:
\[
[V_i,V_j]=\sum_{l=1}^d \omega_{ij}^l V_l,
\]
and
\[
\omega_{ij}^l =-\omega_{il}^j.
\]
\end{itemize}
Then,  in a neighborhood $V$ of $x$, the density function $p(t;x,y)$ of $X^x_t$ in (\ref{SDEyoung_intro}) has the following asymptotic expansion near $t=0$
\begin{align*}
p(t;x,y)=\frac{1}{(t^H)^{d}}e^{-\frac{d^2(x,y)}{2t^{2H}}}\bigg(\sum_{i=0}^N c_i(x,y)t^{2iH}+r_{N+1}(t,x,y)t^{2(N+1)H}\bigg),\quad\quad y\in V.
\end{align*}
Here $d(x,y)$ is the Riemannian distance between $x$ and $y$ determined by the vector fields $V_1,...,V_d$. Moreover, we can chose $V$ such that $c_i(x,y)$ are $C^\infty$ in  $ V\times V\subset \mr^d\times \mr^d$,   and for all multi-indices $\alpha$ and $\beta$ 
$$\sup_{t\leq t_0}\sup_{(x,y)\in V\times V}|\partial^\alpha_x\partial^\alpha_y\partial^k_t r_{N+1}(t,x,y)|<\infty$$
for some $t_0>0$.
\end{theorem}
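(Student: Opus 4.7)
The plan is to adapt the classical Ben Arous--Léandre approach to heat-kernel expansions, replacing Brownian motion by fractional Brownian motion. The starting point is the self-similarity of $B$: since $(B_{ts})_{s\in[0,1]}$ has the same law as $(t^H B_s)_{s\in[0,1]}$, setting $\varepsilon=t^H$ the law of $X^x_t$ coincides with that of $X^{x,\varepsilon}_1$, where
\[
X^{x,\varepsilon}_s = x + \varepsilon \sum_{i=1}^d \int_0^s V_i(X^{x,\varepsilon}_u)\, dB^i_u.
\]
Consequently $p(t;x,y) = \varepsilon^{-d} q_\varepsilon\!\bigl(x,(y-x)/\varepsilon\bigr)$, where $q_\varepsilon$ is the density of $\varepsilon^{-1}(X^{x,\varepsilon}_1-x)$, and the problem reduces to the asymptotic expansion of $q_\varepsilon$ as $\varepsilon\to 0$, in which the natural small parameter will be $\varepsilon^2 = t^{2H}$.

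I would then localize near the Riemannian geodesic $\gamma_{xy}$ connecting $x$ to $y$ with respect to the metric for which $V_1,\dots,V_d$ form an orthonormal frame. The second hypothesis $\omega_{ij}^l = -\omega_{il}^j$ is precisely the metric-compatibility of the connection associated to this moving frame, and it singles out the Riemannian structure as the correct geometric object governing the expansion. Using large-deviation-type exponential estimates available for Young SDEs driven by fBm, the contribution to $q_\varepsilon$ from paths staying away from the Cameron--Martin element $h_{xy}\in\cH$ corresponding to $\gamma_{xy}$ decays faster than $e^{-d^2(x,y)/(2\varepsilon^2)}$ and can be absorbed into the remainder $r_{N+1}$.

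The core of the proof is then a Cameron--Martin shift $B \mapsto B + h_{xy}/\varepsilon$, legitimate for fBm by its Gaussian structure. This produces the prefactor $\exp(-\|h_{xy}\|_{\cH}^2/(2\varepsilon^2)) = \exp(-d^2(x,y)/(2\varepsilon^2))$ and centers the analysis on a shifted process $\wt X^{x,\varepsilon}$ which, by Young's theory and smoothness of the $V_i$, admits a stochastic Taylor expansion in integer powers of $\varepsilon$. Representing the density via the Malliavin integration-by-parts formula as $\me[\delta_y(X^{x,\varepsilon}_1)\,\Phi_\varepsilon]$ and expanding $\Phi_\varepsilon$ term by term, one obtains after taking expectations an expansion in $\varepsilon$ whose odd-order terms vanish by the parity of Gaussian moments on the Cameron--Martin space; the surviving even-order coefficients define $c_i(x,y)$, and the expansion is in $t^{2H}$ as claimed.

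The main obstacle, and where the hypotheses must be used quantitatively, is the uniform control of the Malliavin matrix of $X^{x,\varepsilon}_1$ along $\gamma_{xy}$: since fBm is non-Markovian, no PDE tools are available, and every bound has to be produced through Malliavin calculus together with sharp Young/rough-path estimates. One needs, uniformly in $\varepsilon$ and in $(x,y)\in V\times V$, lower bounds on the Malliavin matrix (so that its inverse can be iterated in the integration by parts) together with polynomial $L^p$-bounds on the stochastic Taylor remainders and their Malliavin derivatives in all orders. The skew-symmetry condition $\omega_{ij}^l=-\omega_{il}^j$ enters here in an essential way: it yields the cancellations in the variational equation governing the Jacobian of $X^{x,\varepsilon}$ along the geodesic that are needed both for the non-degeneracy of the Malliavin matrix and for the smooth dependence of $c_i(x,y)$ on $(x,y)$, as well as for the claimed uniform bound on $r_{N+1}$ together with all its derivatives in $(t,x,y)$.
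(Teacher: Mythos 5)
Your overall architecture matches the paper's in broad outline (self-similarity to pass to $\varepsilon=t^H$, large-deviation localization, Cameron--Martin shift, stochastic Taylor expansion, Malliavin integration by parts, parity of the Gaussian law to obtain powers of $\varepsilon^2=t^{2H}$), but there are two substantive problems with the proposal.

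First, you misattribute the role of the second hypothesis. The skew-symmetry $\omega_{ij}^l=-\omega_{il}^j$ is \emph{not} what yields the non-degeneracy of the Malliavin matrix or the uniform bounds on the remainders: those come from the first hypothesis (ellipticity, $V_1(x),\dots,V_d(x)$ a basis), which is what ensures the smooth density (Baudoin--Hairer) and makes the Malliavin covariance of $g_1$ uniformly invertible. In the paper the skew-symmetry is used for something quite different and entirely deterministic: it forces the Christoffel symbols to be $\Gamma_{ij}^l=\tfrac12\omega_{ij}^l$ and therefore the geodesics of the induced metric to be exactly $\Phi(x,k)$ with $k(t)=tu$. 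This is what makes Proposition~\ref{th: ministance} true,
\[
\inf_{k\in\msh_H,\,\Phi_T(x,k)=y}\|k\|^2_{\msh_H}=\frac{d^2(x,y)}{T^{2H}},
\]
so that the rate functional of the large-deviation principle equals the Riemannian energy and the exponent in the kernel is $d^2(x,y)/2t^{2H}$. Indeed the authors remark explicitly that the Laplace method goes through without the structure equations; they are imposed only to identify the exponent with the Riemannian distance.

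Second, and more seriously, your argument does not explain how to absorb the \emph{linear} term in the Girsanov factor. After the shift $B\mapsto B+\gamma/\varepsilon$, the Cameron--Martin density is not just $\exp(-\|\gamma\|^2_{\msh_H}/2\varepsilon^2)$; it also contains a stochastic integral at scale $\varepsilon^{-1}$,
\[
\exp\!\Big(-\frac1\varepsilon\int_0^1\big((K_H^*)^{-1}\dot{(K_H^{-1}\gamma)}\big)_s\,dB_s\Big),
\]
whose expectation blows up like $e^{c/\varepsilon^2}$. Taking $\me[\delta_y(\wt X^{x,\varepsilon}_1)\,\Phi_\varepsilon]$ and ``expanding $\Phi_\varepsilon$ term by term'' does not remove this divergence, and the claim that the shift alone ``produces the prefactor $\exp(-d^2/2\varepsilon^2)$'' is therefore incomplete. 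The paper handles this by inserting a smooth localization function $F(x,y,z)$ (bounded, equal to a constant outside a ball, with $F(x,y,\cdot)+d(x,\cdot)^2/2$ having a non-degenerate minimum at $y$) inside a Fourier inversion formula. The factor $e^{-F/\varepsilon^2}$ Taylor-expands as $e^{-\theta(0)/\varepsilon^2}e^{-\theta'(0)/\varepsilon}e^{-U(\varepsilon)}$, and Lemma~\ref{th: theta 1} shows that the $\theta'(0)/\varepsilon$ piece is \emph{exactly} the stochastic integral above (up to a deterministic drift term which vanishes here since $V_0=0$). This exact cancellation is what leaves behind an expression expandable in integer powers of $\varepsilon$, with the Malliavin integration by parts then supplying the needed decay in the Fourier variable $\zeta$. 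Without some mechanism of this type---the localization function and Fourier inversion, or an equivalent Watanabe-type decomposition that exploits orthogonality between $W$ and the level set $\{\wt X^\varepsilon_1=y\}$---the proposal does not close.
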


For $H=1/2$, which corresponds to the Brownian motion case, the above theorem admits numerous proofs. The first proofs were analytic and based on the parametrix method. Such methods do not apply in the present framework since the Markov property for $X_t^x$ is lost whenever $H>1/2$. However,   in the seminal works \cite{Az1} and \cite{azencott}, Azencott introduced probabilistic methods to prove the result. These methods introduced by Azencott were then further developed by Ben Arous and L\'eandre in \cite{Ben Arous}, \cite{benarous2}, \cite{Ben2} and \cite{Leandre2}, in order to cover the  case of hypoelliptic heat kernels.  Let us sketch the strategy of  \cite{benarous2} which is based on the Laplace method on the Wiener space and which is the one adopted in the present paper.

\

The first idea is to consider the scaled stochastic differential equation 
$$dX^\varepsilon_t=\varepsilon\sum_{i=1}^{n}V_i(X^\varepsilon_t)dB^i_t,\quad\quad\mathrm{with}\ X^\varepsilon_0=x_0.$$

We observe that there exist neighborhoods $U$ and $V$ of $x_0$ and a bounded smooth function $F(x,y,z)$ on $U\times V\times \mathbb{R}^n$ such that:

(1) For any $(x,y)\in U\times V$ the infimum 
$$\inf\left\{F(x,y,z)+\frac{d(x,z)^2}{2}, z\in \mathbb{R}^n \right\}=0$$
is attained at the unique point $y$. 

(2) For each $(x,y)\in U\times V$, there exists a ball centered at $y$ with radius $r$ independent of $x,y$ such that $F(x,y,\cdot)$ is a constant outside of the ball.

So, denoting by $p_\varepsilon(x_0,y)$ the density of $X^\varepsilon_1$, by the Fourier inversion formula we have
\begin{align*}
p_\varepsilon(x_0,y)e^{-\frac{F(x_0,y,y)}{\varepsilon^2}}&=\frac{1}{(2\pi)^d}\int e^{-i\zeta\cdot y}d\zeta\int e^{i\zeta\cdot z}e^{-\frac{F(x_0,y,z)}{\varepsilon^2}}p_\varepsilon (x_0,z)dz\\
&=\frac{1}{(2\pi\varepsilon)^d}\int d\zeta \mathbb{E}\left(e^{\frac{i\zeta\cdot(X_1^\varepsilon-y)}{\varepsilon}}e^{\frac{F(x_0,y,X^\varepsilon_1)}{\varepsilon^2}}\right).
\end{align*}
Thus, the asymptotics of $p_t(x_0,y)$ may be understood from the asymptotics when $\varepsilon \to 0$ of 
\[
J_\varepsilon (x_0,y)=\mathbb{E}\left(e^{\frac{i\zeta\cdot(X_1^\varepsilon-y)}{\varepsilon}}e^{\frac{F(x_0,y,X^\varepsilon_1)}{\varepsilon^2}}\right).
\]
Then, by using  the Laplace method on the Wiener space based on the large deviation principle, we get an expansion in powers of $\varepsilon$ of $J_\varepsilon (x_0,y)$ which leads to the expected asymptotics for the density function.

In this work, we follows Ben Arous' approach and show how it may be extended to encompass the fractional Brownian motion case.

\

The rest of this paper is organized as follows. In a preliminary section we remind some known facts about fractional Brownian motion and equations driven by it. In the second section we show how the Laplace method may be carried out in the fractional Brownian motion case and finally in the third section which is the heart of the present paper, we prove Theorem \ref{mainT}. We move the proofs of some technical lemmas to the Appendix.

\begin{remark}
Under the framework of this present work, the Laplace method can be obtained in general hypoelliptic case and without imposing the structure equations on vector fields in Theorem \ref{mainT}.  These two assumptions are imposed to obtain the correct Riemannian distance in the kernel expansion.
\end{remark}
\begin{remark}
When $H>1/2$, to obtain a short-time asymptotic formula for the density of solution to equation (\ref{SDEyoung_intro}) but with drift, one need to work on a version of Laplace method with fractional powers of $\varepsilon$, which will be very heavy and tedious in computation.
\end{remark}
\begin{remark}
When the present work was almost completed, we noticed that a proof for the Laplace method for stochastic differential equation driven by fractional Brownian motion with Hurst parameter $1/3<H<1/2$ became available by Y. Inahama\cite{Inahama2} on mathematics Arxiv. 
\end{remark}

\section{Preliminaries}

\subsection{Stochastic differential equations driven by fractional Brownian motions}

We consider the Wiener space of continuous paths:
\[
\mathbb{W}^{\otimes d}=\left( \mathcal{C} ( [0,T] , \mathbb{R}^d
), (\mathcal{B}_t)_{0 \leq t \leq T}, \mathbb{P} \right)
\]
where:
\begin{enumerate}
\item  $\mathcal{C} ( [0,T] , \mathbb{R}^d
)$ is the space of continuous functions $ [0,T] \rightarrow
\mathbb{R}^d$;
\item  $\left( \beta_{t}\right) _{t\geq 0}$ is the coordinate process defined by
$\beta_{t}(f)=f\left( t\right) $, $f \in \mathcal{C} ( [0,T] ,
\mathbb{R}^d )$;
\item  $\mathbb{P}$ is the Wiener measure;
\item  $(\mathcal{B}_t)_{0 \leq t \leq T}$ is the ($\mathbb{P}$-completed) natural filtration
of $\left( \beta_{t}\right)_{0 \leq t \leq T}$.
\end{enumerate}
A $d$-dimensional fractional Brownian motion with Hurst parameter
$H \in (0,1)$ is a Gaussian process
\[
B_t = (B_t^1,\ldots,B_t^d), \text{ } t \geq 0,
\]
where $B^1,\ldots,B^d$ are $d$ independent centred Gaussian
processes with covariance function
\[
R\left( t,s\right) =\frac{1}{2}\left(
s^{2H}+t^{2H}-|t-s|^{2H}\right).
\]
It can be shown that such a process admits a continuous version
whose paths are H\"older $p$ continuous, $p<H$.
Throughout this paper, we will always consider the `regular' case,  $H > 1/2$.
In this case the fractional Brownian motion can be constructed on the Wiener space by a
Volterra type representation (see \cite{du}). Namely, under the Wiener measure, the process
\begin{equation}\label{Volterra:representation}
B_t = \int_0^t {K_H}(t,s) d\beta_s, t \geq 0
\end{equation}
is a fractional Brownian motion with Hurst parameter $H$, where
\begin{equation*}
{K_H} (t, s)=c_H s^{\frac{1}{2} - H} \int_s^t
(u-s)^{H-\frac{3}{2}} u^{H-\frac{1}{2}} du \;,\qquad t>s.
\end{equation*}
and $c_H$ is a suitable constant.

Denote by $\mathcal{E}$ the set of step functions on $[0,T]$. Let $\mathcal{H}$ be the Hilbert space defined as the closure of $\mathcal{E}$ with respect to the scalar product
$$\langle\mathbf{1}_{[0,t]},\mathbf{1}_{[0,s]}\rangle_{\mathcal{H}}=R_H(t,s).$$
The isometry $K_H^*$ from $\mathcal{H}$ to $L^2([0,T])$ is given by
$$(K_H^*\varphi)(s)=\int_s^T\varphi(t)\frac{\partial K_H}{\partial t}(t,s)dt.$$
Moreover, for any $\varphi\in L^2([0,T])$ we have
$$\int_0^T\varphi(s)dB_s=\int_0^T(K_H^*\varphi)(s)d\beta_s.$$  

We consider the following stochastic differential equation
\begin{equation}
\label{SDEmalliavin} X^{x}_t =x +\int_0^t V_0 (X^x_s)ds+
\sum_{i=1}^d \int_0^t V_i (X^{x}_s) dB^i_s
\end{equation}
where the $V_i$'s are $C^\infty$ vector fields on
$\mathbb{R}^d$  with bounded derivatives to any order and $B$ is the $d$-dimensional fractional Brownian
motion defined by (\ref{Volterra:representation}). Existence and uniqueness of solutions for such equations have widely been  studied and are known to hold in this framework.

\subsubsection{Pathwise estimates}

Let $1/2<\lambda<H$ and denote by $C^\lambda(0, T; \mr^d)$ the space of $\lambda$-H\"{o}lder continuous functions equipped with the norm
$$\|f\|_{\lambda,T}:=\|f\|_\infty+\sup_{0\leq s<t\leq T}\frac{|f(t)-f(s)|}{(t-s)^\lambda},$$
where $\|f\|_\infty:=\sup_{t\in[0,T]}|f(t)|$.

The following remarks will be useful later.
\begin{remark}\label{remark 1}
\ \\
1. It is clear that if $f_1,\ f_2\in C^\lambda$, then $f_1f_2\in C^\lambda$ with $\|f_1f_2\|_{\lambda, t}\leq\|f_1\|_{\lambda, t}\|f_2\|_{\lambda,t}$. Therefore, polynomials of elements in $C^\lambda$ are still in $C^\lambda$. It is also clear that whenever $\varphi$ is a Lipschitz function and $f\in C^\lambda$, we have $\varphi(f)\in C^\lambda.$  

\noindent 2. Let $f\in C^\lambda(0,T;\mr^d)$ and $g: [0, T]\rightarrow \mathcal{M}_{n\times  d}$ be a matrix-valued function and suppose $g\in C^\lambda$. By standard argument (see Terry Lyons\cite{Lyons} for instance),
$$\int_0^. g_s\ df_s\in C^\lambda(0, T; \mr^n)$$
with
$$\left\|\int_0^.g_s\ df_s\right\|_{\lambda, T}\leq C\|g\|_{\lambda, T}\|f\|_{\lambda,T}.$$
In the above $C$ is a constant only depending on $\lambda$ and $T$.
\end{remark}

\begin{lemma}(Hu-Nualart, \cite{Hu})\label{lem:apriori}
Consider the stochastic differential equation (\ref{mainT}), and assume that $\me(|X_0|^p)<\infty$ for all $p\geq 2$. If the derivatives of $V_i$'s are bounded and H\"{o}lder continuous of order $\lambda> 1/H-1$, then
$$\me\left(\sup_{0\leq t\leq T}|X_t|^p\right)<\infty$$
for all $p\geq 2.$ If furthermore $V_i$'s are bounded and $\me(\exp(\lambda|X_0|^q))<\infty$ for any $\lambda>0$ and $q<2H$, then
$$\me\left(\exp \lambda\left(\sup_{0\leq t\leq T}|X_t|^q\right)\right)<\infty$$
for any $\lambda >0$ and $q<2H$.
\end{lemma}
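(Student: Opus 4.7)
The plan is to argue pathwise via the Young integral estimate recalled in Remark 2, combined with a standard \textquotedblleft greedy\textquotedblright{} partitioning of $[0,T]$ and Fernique's theorem for the Gaussian process $B$. Throughout I fix $\lambda \in (1/2, H)$ small enough that the H\"older exponent of the derivatives of $V_i$ exceeds $1/\lambda - 1$, so that the Young integral $\int_0^\cdot V_i(X_s)\,dB^i_s$ is well-defined and satisfies the estimates in Remark 2.

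First I would establish a deterministic bound of the form
\[
\|X\|_{\lambda, [s,t]} \leq C_0 + C \bigl(\|V(X)\|_\infty + \|V(X)\|_{\lambda, [s,t]}(t-s)^\lambda\bigr)\|B\|_{\lambda, [s,t]},
\]
where $C_0$ absorbs the drift term. Using the bound $\|V(X)\|_{\lambda, [s,t]} \leq \|\nabla V\|_\infty \|X\|_{\lambda, [s,t]}$, one picks $\Delta = c\,\|B\|_{\lambda, T}^{-1/\lambda}$ so small that the factor multiplying $\|X\|_{\lambda,[s,s+\Delta]}$ on the right is $\leq 1/2$; absorbing it into the left yields a uniform H\"older bound on each subinterval of length $\Delta$. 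Concatenating across $N = \lceil T/\Delta \rceil \lesssim 1 + \|B\|_{\lambda,T}^{1/\lambda}$ subintervals produces two regimes:
\begin{itemize}
\item[(i)] when only the derivatives of the $V_i$ are bounded, the $\|V(X)\|_\infty$ term depends on the running sup of $X$, and iterating the per-interval increment yields
\[
\sup_{0 \leq t \leq T} |X_t| \;\leq\; (1+|X_0|)\,\exp\bigl(\,C\,\|B\|_{\lambda,T}^{1/\lambda}\bigr);
\]
\item[(ii)] when the $V_i$ are themselves bounded, $\|V(X)\|_\infty$ is an absolute constant, and concatenation is additive:
\[
\sup_{0 \leq t \leq T} |X_t| \;\leq\; |X_0| + C\,\bigl(1+\|B\|_{\lambda,T}^{1/\lambda}\bigr).
\]
\end{itemize}

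Second, I would apply Fernique's theorem to the Gaussian random variable $\|B\|_{\lambda,T}$: for any $\lambda < H$ and any $\alpha < 2$,
\[
\mathbb{E}\bigl[\exp\bigl(\mu\,\|B\|_{\lambda,T}^{\alpha}\bigr)\bigr] < \infty \qquad \text{for all } \mu > 0.
\]
Since $\lambda > 1/2$, $1/\lambda < 2$, so exponential moments of $\|B\|_{\lambda,T}^{1/\lambda}$ exist. Plugged into (i), this gives $\mathbb{E}[\sup_t |X_t|^p]<\infty$ for every $p$ (using also the hypothesis $\mathbb{E}|X_0|^p<\infty$ and independence / H\"older's inequality). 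Plugged into (ii), for any $q < 2\lambda$ we get $\mathbb{E}\exp(\mu \sup_t|X_t|^q) < \infty$ (using the assumed exponential moment of $|X_0|^q$); then letting $\lambda \uparrow H$ gives the result for all $q < 2H$.

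The main obstacle is the first, pathwise step: keeping careful track of how $\|X\|_{\lambda,[s,t]}$ absorbs into itself, and in particular producing the correct power $\|B\|_{\lambda,T}^{1/\lambda}$ in the final estimate, since this is exactly what matches Fernique's threshold $\alpha = 2$ and yields the sharp exponent $q < 2H$ in the second conclusion. The gluing across $\|B\|_{\lambda,T}^{1/\lambda}$-many subintervals is where the bounded-vs.-unbounded dichotomy between regimes (i) and (ii) emerges, so care is required to distinguish multiplicative from additive growth when propagating the per-subinterval estimate.
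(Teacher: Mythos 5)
The paper does not prove this lemma; it cites it as a result of Hu and Nualart \cite{Hu}, so there is no in-text argument to compare against. Your reconstruction is essentially correct, and it takes a somewhat different route from the cited source. Hu--Nualart establish the pathwise a priori bound via the fractional-calculus framework of Nualart--Rascanu (Weyl derivatives and a Besov-type seminorm of $B$), whereas you use H\"older norms together with the greedy-partition device familiar from the rough-path literature. Both routes deliver the same critical structure: a deterministic bound of $\sup_t|X_t|$ in terms of a Gaussian seminorm of $B$ raised to a power strictly below $2$, exponential in the unbounded-coefficients regime and only additive when the $V_i$ are bounded, after which Fernique's theorem (which supplies $\mathbb{E}\exp(\mu\,\|B\|_{\lambda,T}^\alpha)<\infty$ for all $\mu>0$ whenever $\alpha<2$) closes the argument. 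The dichotomy you flag between multiplicative and additive propagation across the roughly $\|B\|_{\lambda,T}^{1/\lambda}$ subintervals is indeed the crux, and you handle it correctly. Two small points of care: (a) the H\"older exponent of the derivatives of $V_i$ (the hypothesis that it exceed $1/H-1$) is a parameter distinct from the working path exponent $\lambda\in(1/2,H)$, and reusing the letter $\lambda$ for both obscures the fact that the a priori estimate itself needs only $\lambda>1/2$ and Lipschitz $V_i$, the extra H\"older regularity of the derivatives entering for well-posedness rather than for the moment bound; (b) ``letting $\lambda\uparrow H$'' should be read as choosing, for each fixed $q<2H$, some $\lambda$ with $\max(1/2,\,q/2)<\lambda<H$, rather than as a genuine limiting procedure.
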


\subsection{Cameron-Martin theorem for fBm}

Consider the classical Cameron-Martin space $\msh=\{h\in P_o(\mr^d): \|h\|_\msh< \infty\}$, where 
$$\|h\|_\msh=\left({\int_0^T |\dot{h}_s|^2ds}\right)^{\frac{1}{2}}.$$ The Cameron-Martin space for the fractional Brownian motion $B$ is
$$\msh_H={K_H}(\msh),$$
where the map $K_H: \msh\rightarrow \msh_H$ is given by
$$(K_Hh)_t=\int_0^tK_H(t, s)\dot{h}_sds,\quad\quad \mathrm{for\ all}\ h\in\msh.$$
The inner product on $\msh_H$ is defined by
$$\langle k_1, k_2\rangle_{\msh_H}=\langle h_1, h_2\rangle_\msh,\quad\quad k_i=K_Hh_i, i=1,2.$$
Hence $K_H$ is an isometry between $\msh$ and $\msh_H$.
\begin{remark} It can be shown that when $\gamma\in\msh_H$, $\gamma$ is $H$-H\"{o}lder continuous. 
\end{remark}

The following Cameron-Martin theorem is known (see \cite{du}).
\begin{theorem}[Cameron-Martin theorem for fBm]\label{C-M}
Let $B^k=B+k$ be the shifted fractional Brownian motion, where $k\in \msh_H$ is a Cameron-Martin path. The law $\mp_H^k$ of $B^k$ and the law $\mp_H$ of $B$ are mutually absolutely continuous. Furthermore, the Radon-Nikodym derivative is given by
\begin{align*}
\frac{d\mp_H^k}{d\mp_H}=\exp\left[-\int_0^T(K_H^*)^{-1}(\dot{h})_sdB_s-\frac{1}{2}\|k\|^2_{\msh_H}\right],
\end{align*}
In the above, $h=(K_H)^{-1}k$ and the integral against $B$ is understood as Young's integral.\end{theorem}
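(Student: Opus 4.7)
The plan is to reduce the statement to the classical Cameron-Martin theorem for standard Brownian motion, using the Volterra representation $B_t=\int_0^t K_H(t,s)\,d\beta_s$ as the bridge between $B$ and the underlying Brownian motion $\beta$ on $\mathbb{W}^{\otimes d}$.

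First, since $k\in\msh_H$, I write $k=K_H h$ for the unique $h\in\msh$. By linearity of the Volterra operator,
$$B_t+k_t=\int_0^t K_H(t,s)\,d\beta_s+\int_0^t K_H(t,s)\dot h_s\,ds=\int_0^t K_H(t,s)\,d(\beta_s+h_s),$$
so $\mp_H^k$ is the pushforward under the Volterra map of the law of the shifted Brownian motion $\beta+h$, while $\mp_H$ is the pushforward of the Wiener measure $\mp$.

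Second, the classical Cameron-Martin theorem on $\mathbb{W}^{\otimes d}$ applied to $h\in\msh$ yields that the law of $\beta+h$ is equivalent to $\mp$ with density
$$\exp\!\left(-\int_0^T\dot h_s\,d\beta_s-\tfrac{1}{2}\|h\|_\msh^2\right).$$
Pushing this identity through the Volterra map, which is a measurable injection of one Wiener space into the other, gives at once $\mp_H^k\ll\mp_H$ with the same density, now regarded as a functional of $B$ via $\beta=(K_H)^{-1}B$.

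Third, I rewrite the exponent in terms of $B$. Using the isometry identity $\int_0^T(K_H^*\varphi)(s)\,d\beta_s=\int_0^T\varphi(s)\,dB_s$ recalled in the preliminaries, applied with $\varphi=(K_H^*)^{-1}(\dot h)$, gives
$$\int_0^T\dot h_s\,d\beta_s=\int_0^T(K_H^*)^{-1}(\dot h)_s\,dB_s,$$
while $\|k\|_{\msh_H}^2=\|h\|_\msh^2$ by definition of the inner product on $\msh_H$. Combining these identifications yields the claimed Radon-Nikodym derivative.

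The main obstacle is the pathwise interpretation of the right-hand integral above as a Young integral against $B$, as required by the statement. This hinges on showing that $(K_H^*)^{-1}(\dot h)$ is H\"older continuous of some order strictly greater than $1-H$, so that the Young integral exists and coincides with the Wiener integral on the left. For $H>1/2$ this regularity follows from the explicit fractional-integral formula for $(K_H^*)^{-1}$ on a dense class of smooth $\dot h$; the general case is then obtained by continuity in $h$ of both sides, exploiting that $\msh_H$ paths are themselves $H$-H\"older as noted in the earlier remark.
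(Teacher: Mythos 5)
The paper gives no proof of this theorem; it is quoted as a known result from Decreusefond and \"Ust\"unel \cite{du}, so there is no in-paper argument to compare against. Your proposal reconstructs the standard reduction used in that reference and elsewhere: transfer to the classical Cameron--Martin theorem for the underlying Brownian motion $\beta$ through the Volterra representation $B_t=\int_0^t K_H(t,s)\,d\beta_s$, then carry the Radon--Nikodym density back using the isometry $\int_0^T(K_H^*\varphi)_s\,d\beta_s=\int_0^T\varphi_s\,dB_s$ and the identity $\|k\|_{\msh_H}=\|K_H^{-1}k\|_\msh$. The structure is correct and is the expected one.

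Two remarks are in order. First, the classical Cameron--Martin density of the law of $\beta+h$ with respect to Wiener measure carries a \emph{plus} sign on the stochastic integral,
\[
\frac{d\mathbb{P}^h}{d\mathbb{P}}=\exp\left(\int_0^T\dot h_s\,d\beta_s-\tfrac{1}{2}\|h\|_\msh^2\right),
\]
so the minus sign you wrote is incorrect as a statement of that theorem. The minus sign in the target formula arises when one evaluates the reciprocal density along the shifted path (the form that is effectively used later when computing $J_\rho(\varepsilon)$); your two sign choices cancel to land on the stated conclusion, but the intermediate classical statement should be fixed and the change of viewpoint made explicit. Second, for general $h\in\msh$ the function $(K_H^*)^{-1}(\dot h)$ is only square-integrable and need not be H\"older of order exceeding $1-H$, so the pathwise Young interpretation of $\int_0^T(K_H^*)^{-1}(\dot h)_s\,dB_s$ is not automatic; the density-and-continuity sketch you give in fact produces the Wiener ($L^2$) integral rather than a genuine Young integral for every $h$. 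This is a wording subtlety of the theorem itself; the paper copes with it later by verifying bounded variation of $(K_H^*)^{-1}\dot{(K_H^{-1}\gamma)}$ for the particular minimizer $\gamma$ in the proof of Lemma \ref{th: theta 1}. Neither remark affects the soundness of your overall approach.
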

\subsection{Large deviation principle for fBm}  The following large deviation principle for stochastic differential equation driven by fractional Brownian motion is a special case of Proposition 19.14 in Friz-Victoir\cite{Friz} (see also \cite{millet}).

\begin{proposition}
Fix $\lambda\in(1/2,H)$.
Let $X^\varepsilon$ be the solution to the following stochastic differential equations driven by fBm $B$
\begin{align}\label{sde fBm}X_t^\varepsilon=x_0+\int_0^t V_0(X_s)ds+\sum_{i=1}^{d}\varepsilon\int_0^tV_i(X_s)dB_s^i\end{align}
where $V_i$'s are $C^\infty$ vector fields on $\mr^d$ with bounded derivatives to any order. 
The process $X^\varepsilon$ satisfies a large deviation principle, in $\lambda$-H\"{o}lder topology, with good rate function given by
$$\Lambda(\phi)=\inf\{\bar{\Lambda}(\gamma): \phi=I(\gamma) \}$$
where $I$ is the It\^{o} map given by (\ref{sde fBm}) with $\varepsilon$ being replaced by $1$, and $\bar\Lambda$ is given by
$$\bar{\Lambda}(\gamma)=\left\{\begin{array}{lll}
  \frac{1}{2}\|\gamma\|_{\msh_H}^2&\mathrm{if}\ \gamma\in\msh_H, \\
  \\
  +\infty&\mathrm{otherwise.}
\end{array}\right.$$
\end{proposition}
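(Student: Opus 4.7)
The plan is to derive the LDP by combining Schilder's theorem for the underlying standard Brownian motion $\beta$ with two successive applications of the contraction principle: one to transfer the LDP from $\beta$ to $B$ via the Volterra representation, and one to transfer it from $B$ to $X^\varepsilon$ via the Young solution map.

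First, I would invoke the classical Schilder theorem: for any $\lambda'<1/2$, the scaled Brownian motion $\varepsilon\beta$ satisfies an LDP on $C^{\lambda'}([0,T],\mr^d)$ with good rate function $\tfrac12\int_0^T|\dot h_s|^2\,ds$ when $h\in\msh$ and $+\infty$ otherwise. Next, using the Volterra representation $B_t=\int_0^t K_H(t,s)\,d\beta_s$, one checks that the map $\beta\mapsto B$ extends to a continuous linear map from $C^{\lambda'}$ into $C^\lambda$ for a suitable $\lambda<H$; the contraction principle then yields an LDP for $\varepsilon B$ in the $\lambda$-H\"older topology with good rate function $\bar\Lambda$, since $K_H$ identifies $\msh$ isometrically with $\msh_H$.

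The core step is then to establish continuity of the Young solution map $I:\gamma\mapsto X$ from $C^\lambda$ into itself. For $\gamma\in C^\lambda$ with $\lambda>1/2$ the Young integrals $\int_0^{\cdot} V_i(X_s)\,d\gamma_s^i$ are well defined by the Young integration estimates recalled earlier, so a standard Picard fixed-point argument (combined with those estimates and a Gronwall inequality) produces a unique solution $X=I(\gamma)$ and simultaneously yields local Lipschitz continuity of $I$ in the $\lambda$-H\"older norm. A further application of the contraction principle then transfers the LDP from $\varepsilon B$ to $X^\varepsilon=I(\varepsilon B)$, with rate function $\Lambda(\phi)=\inf\{\bar\Lambda(\gamma):\phi=I(\gamma)\}$. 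The deterministic drift $\int_0^t V_0(X_s)\,ds$ adds only a Lipschitz perturbation and does not affect the argument.

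The main obstacle is the continuity of the Young solution map in the $\lambda$-H\"older topology and, more subtly, the verification that the Volterra operator $K_H$ carries the $C^{\lambda'}$ topology continuously into the $C^\lambda$ topology needed for the Young map; one must also verify exponential tightness of $\varepsilon B$ in the stronger Hölder topology rather than in sup-norm. Both points are technical but by now standard in the Young and rough-path literature, which is precisely why Friz--Victoir's Proposition~19.14 can be invoked off the shelf.
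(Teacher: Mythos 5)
The paper gives no proof of this proposition at all; it simply cites Proposition~19.14 of Friz--Victoir \cite{Friz} (and \cite{millet}), so there is nothing to compare your argument against line by line. Your sketch is a legitimate and essentially standard way to obtain the result for $H>1/2$, and the overall architecture (LDP for the driver in a H\"older topology strong enough that the It\^o map is continuous, followed by the contraction principle) is exactly what the Friz--Victoir reference does. The difference worth noting is in the first step: Friz--Victoir establish the LDP for the (enhanced) Gaussian rough path directly via the general large deviation theory for Gaussian measures on separable Banach spaces, which in the $H>1/2$ case reduces to the abstract Gaussian LDP for $\varepsilon B$ itself on a H\"older space, with rate function $\tfrac12\|\cdot\|_{\msh_H}^2$. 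You instead propose Schilder's theorem for $\varepsilon\beta$ plus a contraction through the Volterra operator $K_H$. That route can be made rigorous, but it is the most delicate part of your outline: the map $\beta\mapsto\int_0^\cdot K_H(\cdot,s)\,d\beta_s$ is not a priori defined on all of $C^{\lambda'}$; one must pass to the integration-by-parts form $B_t=-\int_0^t\beta_s\,\partial_s K_H(t,s)\,ds$ (which forces $\lambda'>H-\tfrac12$ to kill the boundary term at $s=0$) and then prove boundedness in H\"older norms, handling the $s^{1/2-H}$ singularity at the origin. Invoking the abstract Gaussian LDP (or the Friz--Victoir result you already cite) avoids this technical detour entirely. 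The remaining ingredients in your sketch --- compactness of Cameron--Martin level sets in $C^{\lambda'}$, exponential tightness of $\varepsilon B$ in the H\"older topology, local Lipschitz continuity of the Young solution map on $C^\lambda$ for $\lambda>1/2$, and absorbing the $V_0$ drift as a deterministic perturbation inside the It\^o map --- are all correct and correctly flagged.
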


\section{Laplace method}
Consider the following stochastic differential equation driven by fractional Brownian motion on $\mr^d$:
\begin{align*}X_t^\varepsilon=x_0+\int_0^t V_0(X_s)ds+\sum_{i=1}^{d}\varepsilon\int_0^tV_i(X_s)dB_s^i.\end{align*}
For the convenience of our discussion, in what follows, we write the above equation in the following form
\begin{align*}
X_t^\varepsilon=x+\varepsilon\int_0^t\sigma(X^\varepsilon_s)dB_s+\int_0^tb(\varepsilon,X_s^\varepsilon)ds,
\end{align*}
where $\sigma$ is a smooth $d\times d$ matrix and $b$ a smooth function from $\mr^+\times\mr^d$ to $\mr^d$. We also assume that $\sigma$ and $b$ have bounded derivatives to any order.

Let $F$ and $f$ be two smooth functionals with smooth derivatives to any order. We are interested in studying the asymptotic behavior of 
$$J(\varepsilon)=\me\big[f(X^\varepsilon)\exp\{-F(X^\varepsilon)/\varepsilon^2\}\big]$$
as $\varepsilon \downarrow 0$. Indeed, the following theorem is the main result of this section.

\begin{theorem}
Under the assumption H 1 and H 2 below, we have
$$J(\varepsilon)=e^{-\frac{a}{\varepsilon^2}}e^{-\frac{c}{\varepsilon}}\bigg(\alpha_0+\alpha_1\varepsilon+...+\alpha_N\varepsilon^N+O(\varepsilon^{N+1})\bigg).$$
Here
$$a=\inf\{F+\Lambda(\phi), \phi\in P(\mr^d)\}=\inf\{F\circ\Phi(k)+1/2|k|^2_{\msh_H}, k\in\msh_H\}$$
and
$$c=\inf\big\{ dF(\phi_i)Y_i, i\in\{1,2,...,n\}\big\},$$
where $Y_i$ is the solution of
$$dY_i(s)=\partial_x\sigma(\phi_i(s))Y_i(s)d\gamma_i(s)+\partial_\varepsilon b(0,\phi_i(s))ds+\partial_x b(0,\phi_i(s))Y_i(s)ds$$
with $Y_i(0)=0$.
\end{theorem}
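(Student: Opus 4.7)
The plan is to follow Azencott's and Ben Arous' Wiener-space Laplace method, with the Brownian Cameron--Martin theorem replaced by its fBm analogue (Theorem~\ref{C-M}) and the classical Schilder-type LDP replaced by the fractional one just recalled. Assumptions H1 and H2 should guarantee, in the usual way, that the variational problem
\[ a=\inf\bigl\{F\circ\Phi(k)+\tfrac12\|k\|_{\msh_H}^2:\ k\in\msh_H\bigr\} \]
is attained at finitely many non-degenerate minimizers $k_1,\dots,k_n$, with deterministic trajectories $\phi_i=\Phi(k_i)$. The first step is a localization: for a small $\lambda$-H\"{o}lder neighborhood $U$ of $\{\phi_1,\dots,\phi_n\}$ one has $F+\Lambda\geq a+\delta$ on $U^c$ for some $\delta>0$, so the LDP upper bound shows that the contribution of $\{X^\varepsilon\notin U\}$ to $J(\varepsilon)$ is $o(e^{-(a+\delta/2)/\varepsilon^2})$ and is absorbable into the remainder. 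Integrability of the relevant exponentials is ensured by the Hu--Nualart bounds of Lemma~\ref{lem:apriori}.

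Near each $\phi_i$ I apply the fractional Cameron--Martin theorem with shift $k_i/\varepsilon$, rewriting the local contribution to $J(\varepsilon)$ as
\[ e^{-\|k_i\|_{\msh_H}^2/(2\varepsilon^2)}\,\me\Bigl[f(\wt X^\varepsilon_i)\,e^{-F(\wt X^\varepsilon_i)/\varepsilon^2}\,e^{-\frac{1}{\varepsilon}\int_0^T (K_H^*)^{-1}(\dot h_i)_s\,dB_s}\Bigr], \]
where $h_i=K_H^{-1}k_i$ and $\wt X^\varepsilon_i$ solves the SDE driven by $B+k_i/\varepsilon$. Since $a=F(\phi_i)+\tfrac12\|k_i\|_{\msh_H}^2$, the prefactor produces the announced $e^{-a/\varepsilon^2}$. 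I then expand $\wt X^\varepsilon_i$ pathwise in $\varepsilon$ in the $\lambda$-H\"{o}lder topology,
\[ \wt X^\varepsilon_i=\phi_i+\varepsilon Y_i+\varepsilon \bar Z_i+\varepsilon^2 W_i+\cdots, \]
where $Y_i$ is the deterministic ODE solution appearing in the statement and $\bar Z_i$ is the Gaussian first-order correction driven by $\sigma(\phi_i)\,dB$; $F$ and $f$ are Taylor-expanded about $\phi_i$ in parallel.

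The crucial structural step is that the $O(\varepsilon^{-1})$ stochastic terms collected from $-F(\wt X^\varepsilon_i)/\varepsilon^2$, namely $-\varepsilon^{-1}dF(\phi_i)[\bar Z_i]$, cancel with the stochastic piece $-\varepsilon^{-1}\int_0^T(K_H^*)^{-1}(\dot h_i)_s\,dB_s$ of the Cameron--Martin density. This cancellation is exactly the first-order Euler--Lagrange identity characterizing $k_i$ as a critical point of $F\circ\Phi+\tfrac12\|\cdot\|_{\msh_H}^2$, transported to $L^2$ via the isometry $K_H^*$. The surviving $\varepsilon^{-1}$ contribution is the deterministic $-\varepsilon^{-1}dF(\phi_i)[Y_i]$, whose minimum over $i$ yields $c$; the remaining expectation is a Gaussian-type perturbation, finite by the non-degeneracy supplied by H2, and its expansion in positive powers of $\varepsilon$ produces $\alpha_0,\dots,\alpha_N$ with remainder $O(\varepsilon^{N+1})$.

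The main obstacle will be the rigorous control of Taylor remainders without It\^o calculus: every increment has to be estimated pathwise through Young-type bounds (Remark~\ref{remark 1}), and integrability of the residual exponentials of Young integrals then demands Fernique-type estimates on $\|B\|_{\lambda,T}$ combined with the exponential moment control of Lemma~\ref{lem:apriori}. A subtler point is that $\int(K_H^*)^{-1}(\dot h_i)_s\,dB_s$ is only an $L^2$ stochastic object, so matching its $\varepsilon^{-1}$ contribution against the pathwise expansion of the SDE requires function spaces in which both $\wt X^\varepsilon_i-\phi_i$ and the log-Radon--Nikodym density admit consistent expansions, so that the critical-point cancellation is verified rigorously rather than formally.
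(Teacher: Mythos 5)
Your proposal follows essentially the same Ben-Arous-style route as the paper: LDP localization to neighborhoods of the minimizers, fractional Cameron--Martin shift by $\gamma_i/\varepsilon$, pathwise Taylor expansion of the shifted SDE $Z^\varepsilon$ in $\varepsilon$, cancellation of the $\varepsilon^{-1}$ stochastic terms via the Euler--Lagrange identity at the critical point $\gamma_i$, and a final expansion of $f(Z^\varepsilon)e^{-U(\varepsilon)}$ whose integrability rests on H2. The one technical concern you flag---that $\int_0^T (K_H^*)^{-1}(\dot h_i)_s\,dB_s$ is only an $L^2$ object and so might not match a pathwise expansion---is precisely what the paper resolves in Lemma~\ref{th: theta 1}: the bound \eqref{continuity} on $d\Phi(\gamma)$ together with the duality identity \eqref{idty} shows that $(K_H^*)^{-1}\dot{(K_H^{-1}\gamma)}$ has bounded variation, so the integral is a pathwise Young integral and the cancellation you invoke holds rigorously, not merely formally.
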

For each $k\in \msh_H$, denote by $\Phi(k)$ the solution to the following deterministic differential equation
\begin{align}\label{Phi}du_t=\sigma(u_t)dk_t+b(0,u_t)dt,\quad \mathrm{with}\ u_0=x.\end{align}
\begin{lemma}\label{th: EU}
Let $\Phi$ be defined as above, we have
$$\Lambda(\phi)=\inf\left\{\frac{1}{2}\|k\|_{\msh_H}^2, \phi=\Phi(k), k\in\msh_H\right\}.$$
Moreover, if $\Lambda(\phi)<\infty$, there exists a unique $k\in\msh_H$ such that $\Phi(k)=\phi$ and $\Lambda(\phi)=1/2\|k\|_{\msh_H}^2.$
\end{lemma}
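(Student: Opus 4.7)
The plan is to derive both statements directly from the large deviation proposition above combined with the pathwise invertibility of the Young equation defining $\Phi$. First, observe that the It\^o map $I$ appearing in the large deviation proposition---which assigns to a driving path $\gamma$ the solution of $du_t = b(0,u_t)dt + \sigma(u_t)\,d\gamma_t$ with $u_0=x$---is precisely the map $\Phi$ defined in (\ref{Phi}). Unwinding the formula $\Lambda(\phi)=\inf\{\bar\Lambda(\gamma):\phi=I(\gamma)\}$ and recalling that $\bar\Lambda(\gamma)=\tfrac12\|\gamma\|_{\msh_H}^2$ on $\msh_H$ and $+\infty$ off $\msh_H$ immediately yields the first displayed identity.

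For the uniqueness assertion, I would assume $\Lambda(\phi)<\infty$, so that the set $\{k\in\msh_H:\Phi(k)=\phi\}$ is automatically non-empty; it then suffices to show that this set is a singleton, for then the infimum is attained at that unique element. Here I would invoke the invertibility of $\sigma$ (available under the standing basis assumption from Theorem \ref{mainT}, which makes the matrix $\sigma=(V_1\,|\,\cdots\,|\,V_d)$ everywhere invertible). If $k\in\msh_H$ satisfies $\Phi(k)=\phi$, then $k$ is $H$-H\"older and the Young ODE (\ref{Phi}) propagates this regularity, so $\phi\in C^\lambda$ for $1/2<\lambda<H$. Rearranging (\ref{Phi}) pathwise gives
\[
k_t = \int_0^t \sigma(\phi_s)^{-1}\bigl(d\phi_s - b(0,\phi_s)\,ds\bigr),\qquad k_0=0,
\]
which is a well-defined Young integral because $\sigma(\phi)^{-1}\in C^\lambda$ (smoothness of $\sigma^{-1}$ and composition with $\phi$, as in Remark \ref{remark 1}) and $\phi\in C^\lambda$. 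This inversion formula uniquely determines $k$ from $\phi$, so there is at most one $k\in\msh_H$ with $\Phi(k)=\phi$; combined with non-emptiness we get both uniqueness and $\Lambda(\phi)=\tfrac12\|k\|_{\msh_H}^2$.

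The main subtlety I anticipate is purely technical: the pathwise inversion formula a priori produces an element of $C^\lambda$, and one might worry whether it coincides with the genuine $\msh_H$-element one started with. Fortunately this is not something that needs to be checked, since the statement only concerns uniqueness \emph{within} $\msh_H$, and the pathwise identity determines $k$ as a function, hence determines it inside $\msh_H$ as well. An alternative, softer route to existence (not needed here) would take a minimizing sequence $k_n$ in the non-empty sublevel set $\{k:\tfrac12\|k\|_{\msh_H}^2\le\Lambda(\phi)+1\}$, extract a weakly convergent subsequence by Hilbert space compactness, and pass to the limit in (\ref{Phi}) using continuity of the It\^o map in $\lambda$-H\"older topology; the pathwise inversion step would then still be required to obtain uniqueness.
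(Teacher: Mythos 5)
Your proof is correct and rests on the same key idea as the paper's: the invertibility of the matrix $\sigma(\cdot)=(V_1\,|\cdots|\,V_d)$ forces the control $k$ to be uniquely determined by the output path $\phi$. The paper packages this slightly more economically, by taking two controls $k_1,k_2\in\msh_H$ with $\Phi(k_1)=\Phi(k_2)=\phi$, subtracting the two driven ODEs (the drift and the coefficient $\sigma(\phi_s)$ cancel identically since $\phi$ is the same), obtaining $\int_0^t\sigma(\phi_s)\,d(k_1-k_2)_s=0$ for all $t$, and then invoking the linear independence of the columns of $\sigma$ to conclude $k_1=k_2$; this avoids the explicit Young-integral inversion formula $k_t=\int_0^t\sigma(\phi_s)^{-1}\bigl(d\phi_s-b(0,\phi_s)\,ds\bigr)$ that you write down, but it is the same injectivity statement in a marginally more elementary form.
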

\begin{proof}
The first statement is apparent. For the second statement, we only need to notice that if $$\phi=\Phi(k_1)=\Phi(k_2),\quad\quad k_1, k_1\in\msh_H,$$
then
$$\int_0^t\sigma(\phi_s)d(k_1-k_2)_s=0,\quad\quad \ t\in[0,T],$$
which implies that $k_1=k_2$, since we assume that columbs of $\sigma$ are linearly independent. The proof is therefore completed.
\end{proof}


Throughout our discussion we make the following assumptions:
\begin{assumption}\label{assumption Laplace}
\ \\
\begin{itemize}
\item H 1: $F+\Lambda$ attains its minimum at finite number of paths $\phi_1, \phi_2, ..., \phi_n$ on $P(\mr^d)$.
\ \\
\item H 2: For each $ i\in\{1,2,...,n\}$, we have $\phi_i=\Phi(\gamma_i)$ and $\gamma_i$ is a non-degenerate minimum of the functional $F\circ\Phi+1/2\|\cdot\|^2_{\msh_H}$, i.e.:
$$ \forall k\in\msh_H-\{0\},\quad d^2(F\circ\Phi+1/2\| \cdot\|^2_{\msh_H})(\gamma_i)k^2>0.$$
\end{itemize}
\end{assumption}

\begin{lemma} Under assumption H 1, we have $$a\stackrel{\mathrm{def}}{=}\inf\{F+\Lambda(\phi), \phi\in P(\mr^d)\}=\inf\left\{F\circ\Phi(k)+\frac{1}{2}\|k\|^2_{\msh_H}, k\in\msh_H\right\},$$ and the minimum is attained at $n$ paths $\gamma_1, \gamma_2,...,\gamma_n\in\msh_H$ such that $$\Phi(\gamma_i)=\phi_i$$ and $$\frac{1}{2}\|\gamma_i\|^2_{\msh_H}=\Lambda(\Phi(\gamma_i)).$$
\end{lemma}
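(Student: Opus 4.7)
The plan is to derive this lemma as a direct corollary of Lemma \ref{th: EU} together with the definition of $\Lambda$ coming from the LDP, so the proof will be essentially bookkeeping rather than analysis. The two infima should be shown equal by explicit rewriting, and then the attainment statement will follow by applying Lemma \ref{th: EU} to each minimizing path $\phi_i$.

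First I would establish the equality of the two infima. Observe that Lemma \ref{th: EU} asserts that for every $\phi\in P(\mr^d)$
\[
\Lambda(\phi)=\inf\Big\{\tfrac12\|k\|_{\msh_H}^2:\ k\in\msh_H,\ \Phi(k)=\phi\Big\},
\]
with the convention that the infimum over the empty set is $+\infty$. Adding $F(\phi)$ to both sides and then taking the infimum over $\phi\in P(\mr^d)$, I would interchange the two infima and reparametrize the resulting double infimum by $k\in\msh_H$, using $\phi=\Phi(k)$. Paths $\phi$ not lying in the range of $\Phi$ contribute $+\infty$ on the left-hand side (so they are irrelevant), and $k\in\msh_H$ on the right-hand side ranges precisely over all admissible parameters. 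This yields
\[
\inf_{\phi\in P(\mr^d)}\{F(\phi)+\Lambda(\phi)\}=\inf_{k\in\msh_H}\Big\{F\circ\Phi(k)+\tfrac12\|k\|_{\msh_H}^2\Big\}=a.
\]

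Next I would prove the attainment statement. Assumption H\,1 guarantees that the infimum on the left is realized at $\phi_1,\ldots,\phi_n$, and since $F$ is smooth and bounded below on the neighborhoods under consideration, we must have $\Lambda(\phi_i)<\infty$ for every $i$. The second half of Lemma \ref{th: EU} then produces, for each $i$, a \emph{unique} $\gamma_i\in\msh_H$ with $\Phi(\gamma_i)=\phi_i$ and $\tfrac12\|\gamma_i\|_{\msh_H}^2=\Lambda(\phi_i)$. Plugging back into the rewritten functional shows that each $\gamma_i$ achieves the infimum of $F\circ\Phi+\tfrac12\|\cdot\|_{\msh_H}^2$, and conversely any minimizer $k^\ast$ on the right must project to some $\phi_i$ (otherwise $F(\Phi(k^\ast))+\tfrac12\|k^\ast\|_{\msh_H}^2>a$ by H\,1) and hence by uniqueness equals one of the $\gamma_i$.

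There is no real obstacle here; the only thing to be careful about is the interchange of the two infima and the handling of the value $+\infty$ for paths outside $\msh_H$, which is routine once one recalls that $\bar\Lambda$ is set to $+\infty$ off $\msh_H$. The uniqueness of the $\gamma_i$ relies entirely on the non-degeneracy of $\sigma$ already used in the proof of Lemma \ref{th: EU} (the columns of $\sigma$ are linearly independent, so two controls producing the same path must coincide), and no extra ingredient is needed.
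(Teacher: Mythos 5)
Your proof is correct and follows the same route as the paper, which simply records this lemma as ``a direct corollary of Lemma \ref{th: EU}''; your argument (rewriting $\Lambda$ via Lemma \ref{th: EU}, interchanging the two infima, then invoking H\,1 together with the uniqueness clause of Lemma \ref{th: EU} to identify the minimizers $\gamma_i$) is precisely the bookkeeping that justifies that claim.
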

\begin{proof}
This is a direct corollary of Lemma \ref{th: EU}.
\end{proof}

Assumption H 2 has a simple interpretation as follows. Let $\gamma$ be one of the $\gamma_i$'s above. Define a bounded self-adjoint operator on $\msh$ by
$$d^2 F\circ\Phi(\gamma)(K_Hh^1,K_Hh^2)=(Ah^1,h^2)_\msh,\quad\quad \mathrm{for}\ h^1,\ h^2\in\msh.$$
\begin{lemma}\label{H2}
The bounded self-adjoint operator $A$ is Hilbert-Schmidt. 
\end{lemma}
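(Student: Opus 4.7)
The plan is to write $A$ in a factored form whose outer factor is the operator $K_H$ (or its adjoint) viewed as an integral operator on $L^2([0,T])$, and then invoke the fact that $K_H$ is itself Hilbert--Schmidt when $H>1/2$. Concretely, the identification $\msh\cong L^2([0,T];\mr^d)$ via $h\mapsto \dot h$, combined with the kernel $K_H(t,s)\mathbf{1}_{s<t}$, makes $K_H$ an integral operator whose Hilbert--Schmidt norm is $\int_0^T\!\!\int_0^T |K_H(t,s)|^2\,ds\,dt$; this integral is finite because for $H>1/2$ the kernel $K_H(t,s)$ is locally integrable to any power, so the bound is immediate.

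First I would differentiate the defining ODE \eqref{Phi} twice with respect to $k$ at $k=\gamma$. Writing $\phi=\Phi(\gamma)$, the first variation $v^h:=D\Phi(\gamma)\cdot K_Hh$ solves a linear Young equation and admits the representation $v^h_t=\int_0^tJ_{t,s}\sigma(\phi_s)\,d(K_Hh)_s$ with $J_{t,s}$ the resolvent of the linearized flow; the second variation $w^{h^1,h^2}:=D^2\Phi(\gamma)(K_Hh^1,K_Hh^2)$ solves an affine linear equation whose inhomogeneity is bilinear in $v^{h^1},v^{h^2}$. Applying the chain rule,
\begin{equation*}
d^2(F\circ\Phi)(\gamma)(K_Hh^1,K_Hh^2)=d^2F(\phi)(v^{h^1},v^{h^2})+dF(\phi)\cdot w^{h^1,h^2},
\end{equation*}
which gives an explicit bilinear expression for $(Ah^1,h^2)_\msh$.

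Next I would integrate by parts in every Young integral appearing in $v^h$ and $w^{h^1,h^2}$ to convert each $d(K_Hh)_s$ into a Lebesgue integral $\int h(u)\,du$ via the identity $(K_Hh)_s=\int_0^s K_H(s,u)\dot h_u\,du$ and the formula $(K_H^*\varphi)(s)=\int_s^T\varphi(t)\partial_tK_H(t,s)\,dt$. After this manipulation each piece of the bilinear form can be written as $(\,\cdot,\cdot)_{L^2}$ of $h^1,h^2$ against integral kernels built from $K_H$, the resolvent $J$, and the smooth data $\sigma, b, dF(\phi), d^2F(\phi)$. This yields a decomposition
\begin{equation*}
A = K_H^* B_1 K_H + K_H^* B_2 + B_2^* K_H,
\end{equation*}
in which $B_1,B_2$ are bounded operators on $L^2([0,T];\mr^d)$ depending only on $\phi$ and the smooth data. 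Since Hilbert--Schmidt operators form a two-sided ideal in the bounded operators, and $K_H,K_H^*$ are Hilbert--Schmidt, each term is Hilbert--Schmidt, hence so is $A$.

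The main obstacle I expect is the integration-by-parts step for the double Young integral representing $w^{h^1,h^2}$: one must rewrite a nested integral $\int_0^\cdot\!\int_0^\cdot(\cdots)\,d(K_Hh^1)\,d(K_Hh^2)$ as an honest Lebesgue double integral in $h^1,h^2$ while preserving symmetry and extracting the $K_H$ factors cleanly on both sides. Once this bookkeeping is done and the regularity of $J_{t,s}$ and $\phi$ (which is $\lambda$-Hölder by the a priori estimates recalled above) is used to check $B_1,B_2$ are bounded on $L^2$, the Hilbert--Schmidt conclusion is automatic from the ideal property and the $L^2$-integrability of the Volterra kernel $K_H$.
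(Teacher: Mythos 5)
Your idea of exploiting the Hilbert--Schmidt property of $K_H$ is pointing at the right phenomenon, but the specific factorization you propose does not go through, and the paper in fact uses a different (hybrid) argument.

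The structural gap is in the claim that $A = K_H^* B_1 K_H + K_H^* B_2 + B_2^* K_H$ with $B_1,B_2$ bounded on $L^2([0,T])$. Take the term $d^2F(\phi)\big(\chi(K_Hh^1),\chi(K_Hh^2)\big)$, where $\chi(k)=d\Phi(\gamma)k$. The only natural candidate for $B_1$ here is $\widetilde B^* D\,\widetilde B$, where $\widetilde B$ is the linearized Young-ODE solution map $k\mapsto\chi(k)$ and $D$ represents $d^2F(\phi)$. But $\widetilde B$ is defined and bounded only on H\"older spaces $C^\lambda$ with $\lambda>1/2$ (the minimal regularity needed for Young integration against $d\gamma$); it has no extension to $L^2$, since $L^2\not\subset C^\lambda$. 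So $B_1$ is not a bounded operator on $L^2$, and you cannot invoke the ideal property in the form you state. What is true is the weaker statement that the composition $\chi(K_H\,\cdot\,)\colon L^2\to L^2$ is bounded, but boundedness is not enough: a bounded operator sandwiched between two merely bounded factors need not be Hilbert--Schmidt, and here the form $d^2F(\phi)(\chi(K_Hh^1),\chi(K_Hh^2))$ defines a priori only a bounded operator on $\msh$. Similarly, the phrase ``the Hilbert--Schmidt conclusion is automatic from the ideal property and the $L^2$-integrability of $K_H$'' is too fast: after integrating the Young integrals by parts, what appears in the kernels is $\partial_t K_H(t,s)\sim(t-s)^{H-3/2}$, which is \emph{not} an $L^2$ kernel on $[0,T]^2$ for $H<1$. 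The square-integrability of the final kernel comes from a cancellation (two such factors combine, or one combines with the harmless $\mathbf{1}_{u<t}$ kernel of the primitive), and one must actually check this rather than quote the Hilbert--Schmidt norm of $K_H$ itself.

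The paper's proof sidesteps both problems by a two-case split. It isolates the piece $\tilde A$ of $A$ coming from the Young integrals $\int \chi(k^1)\,dk^2$ in $d^2\Phi(\gamma)$; using \eqref{chi2} and \eqref{V} it writes $\tilde A$ with an explicit kernel (built from $\partial_s K_H(s,u)$ and the resolvent) and checks directly that this kernel is in $L^2$. For the remainder $A-\tilde A$, it does \emph{not} produce an $L^2$ kernel. Instead it proves the sup-norm estimate $\|d\Phi(\gamma)K_Hh\|_\infty\le C\|h\|_\infty$, and then invokes the Ben Arous argument: a symmetric bilinear form on $\msh$ that extends continuously to the space of continuous paths with the sup norm necessarily defines a Hilbert--Schmidt operator on $\msh\cong L^2$, because the evaluation map $h\mapsto h(t)$ has the bounded kernel $\mathbf{1}_{[0,t]}$. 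That route makes it unnecessary to show that the terms $d^2F(\phi)(\chi,\chi)$, $\partial^2_{xx}\sigma(\phi)(\chi,\chi)$, $\partial^2_{xx}b(0,\phi)(\chi,\chi)$ have $L^2$ kernels; their sup-norm continuity suffices. To salvage your approach you would have to replace the factorization by a direct verification that the operator $\chi(K_H\,\cdot\,)$ is itself Hilbert--Schmidt (which indeed follows from the kernel in \eqref{chi2}), and then compose. That is close to, but not identical with, what the paper does, since the paper only needs this for the $V$-term and handles the rest by the sup-norm device.
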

\begin{proof}
The proof is similar to that in Ben Arous\cite{Ben Arous} but with slight modification. Thus we only sketch the proof here . In what follows, $k$ always denotes an element in $\msh_H$ and $h=K_H^{-1}k$ its corresponding element in $\msh$.

For any $k^1, k^2\in\msh_H$, we have
\begin{align*}d^2F\circ\Phi(\gamma)(K_Hh^1,K_Hh^2)&=d^2F\circ\Phi(\gamma)(k^1,k^2)\\
&=d^2F(d\Phi(\gamma)k^1, d\Phi(\gamma)k^2)+dF(\phi)(d^2\Phi(\gamma)(k^1,k^2)).
\end{align*}

Let
$$\phi=\Phi(\gamma)\quad\quad\ \mathrm{and}\quad \ \chi(k)=d\Phi(\gamma)k.$$
It can be shown (cf. Ben Arous\cite{Ben Arous}),
$$d\phi_t=\sigma(\phi_t)d\gamma_t+b(0,\phi_t)dt,\quad\quad\mathrm{with}\ \phi_0=x,$$
$$d\chi_t=\sigma(\phi_t)dk_t+\partial_x\sigma(\phi_t)\chi_td\gamma_t+\partial_xb(0,\phi_t)\chi_tdt,\quad\quad\mathrm{with}\ \chi_0=0,$$
and
\begin{align*}
d^2\Phi(\gamma)(k^1,k^2)(t)&=\int_0^1Q(t,s)\partial_x\sigma(\phi_s)\big(\chi(k^1)_sdk^2_s+\chi(k^2)_sdk^1_s\big)\\
&\ \ +\int_0^t\partial^2_{xx}\sigma(\phi_s)\big(\chi(k^1)_s,\chi(k^2)_s\big)d\gamma_s+\int_0^t\partial^2_{xx}b(0,\phi_s)\big(\chi(k^1)_s,\chi(k^2)_s\big)ds.
\end{align*}
Here $Q(t,s)$ takes the form
$$Q(t,s)=\partial_x\phi_t(x)\partial_x\phi_s(x)^{-1}.$$
Moreover, we have
\begin{align}\label{chi2}
\chi_t(k)&=\int_0^tQ(t,s)\sigma(\phi_s)dk_s\\
&=\int_0^t\left(\int_u^t Q(t,s)\sigma(\phi_s)\frac{\partial K_H(s,u)}{\partial s} ds\right)\dot{h}_udu\nonumber
\end{align}
Set
\begin{align}\label{V}V(h^1,h^2)(t)&=\int_0^tQ(t,s)\partial_x\sigma(\phi_s)\big(\chi(K_Hh^1)_sd(K_Hh^2)_s+\chi(K_Hh^2)_sd(K_Hh^1)_s\big)\\
&=\int_0^tQ(t,s)\partial_x\sigma(\phi_s)\big(\chi(k^1)_sdk^2_s+\chi(k^2)_sdk^1_s\big)\nonumber\\
&=\int_0^t\int_u^tQ(t,s)\partial_x\sigma(\phi_s)\frac{\partial K_H(s,u)}{\partial s}\big(\chi(k^1)_sh^2_u+\chi(k^2)_sh^1_u\big)dsdu.\nonumber
\end{align}
Define a bounded self-adjoint operator $\tilde{A}$ from $\msh$ to $\msh$ by
$$ dF(\phi)(V(h^1,h^2))=(\tilde{A} h^1, h^2)_\msh$$ 
We conclude that $\tilde{A}$ is Hilbert-Schmidt since, by (\ref{chi2}) and (\ref{V}), it is defined from a $L^2$ kernel. Therefore, to complete the proof, it suffices to show that $A-\tilde{A}$ is Hilber-Schmidt. By the same argument as in Ben Arous\cite{Ben Arous}, we only need to show
$$\|d\Phi(\gamma)K_Hh\|_\infty=\|\chi(K_Hh)\|_\infty\leq C\|h\|_\infty,\quad\quad\mathrm{for\ all}\ h\in\msh.$$
Indeed, by an easy application of Gronwall inequality to the equation for $\chi$, we have
$$\|d\Phi(\gamma)(K_Hh)\|_\infty\leq \|K_Hh\|_\infty.$$
Moreover, since
$$(K_Hh)_t=\int_0^t K_H(t,s)\dot{h}_sds,$$
and note $\partial K_H(t,s)/\partial s\in L^1$,we have
\begin{align*}
|K_Hh|_t\leq\left|\int_0^tK_H(t,s)\dot{h}_sds\right|
=\left|\int_0^t h_s \frac{\partial K_H(t,s)}{\partial s}ds\right|
\leq \|h\|_\infty \int_0^t  \left|\frac{\partial K_H(t,s)}{\partial s}\right|ds,
\end{align*}
The proof is completed.

\end{proof}
From the above lemma, assumption H 2 simply means that the smallest eigenvalue of $A$ is attained and is strictly greater that $-1$.

\subsection{Localization around the minimum}
By the large deviation principle, the sample paths that has contribution to the asymptotics of $J(\varepsilon)$ lie in the neighborhoods of the minimizers  of $F+\Lambda$. More precisely,
\begin{lemma} For $\rho>0$, denote by $B(\phi_i, \rho)$ the open ball (under $\lambda$-H\"{o}lder topology) centered at $\phi_i$ with radius $\rho$. There exist $d>a$ and $\varepsilon_0>0$ such that for all $\varepsilon\leq \varepsilon_0$
$$\left|J(\varepsilon)-\me\left[f(X_T^\varepsilon)e^{-F(X_T^\varepsilon)/\varepsilon^2}, X^\varepsilon\in\bigcup_{1\leq i\leq n}B(\phi_i,\rho)\right]\right|\leq e^{-d/\varepsilon^2}.$$
\end{lemma}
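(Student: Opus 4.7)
The strategy is a classical localization via the large deviation principle (Proposition~2.5) combined with a Varadhan-type upper bound. Write $C_\rho = \mathbb{W}^{\otimes d} \setminus \bigcup_{i=1}^n B(\phi_i,\rho)$, which is closed in the $\lambda$-Hölder topology. Since
\[
\left| J(\varepsilon) - \mathbb{E}\!\left[f(X^\varepsilon_T) e^{-F(X^\varepsilon)/\varepsilon^2} \mathbf{1}_{X^\varepsilon \in \bigcup_i B(\phi_i,\rho)}\right]\right|
\le \|f\|_\infty\, \mathbb{E}\!\left[e^{-F(X^\varepsilon)/\varepsilon^2} \mathbf{1}_{X^\varepsilon \in C_\rho}\right],
\]
(using boundedness of $f$, or Cauchy--Schwarz with the moment estimates of Lemma~\ref{lem:apriori} if $f$ merely has polynomial growth), the problem reduces to proving the exponentially small bound on the right-hand side.

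The first step is the deterministic claim that $a_\rho := \inf_{\phi \in C_\rho}\bigl(F(\phi) + \Lambda(\phi)\bigr) > a$. By Assumption H1 the level set $\{F+\Lambda \le a\}$ equals $\{\phi_1,\dots,\phi_n\}$, which is disjoint from $C_\rho$. If a minimizing sequence $\psi_m \in C_\rho$ satisfied $F(\psi_m)+\Lambda(\psi_m) \to a$, then since $F$ is bounded the values $\Lambda(\psi_m)$ are bounded, so $\{\psi_m\}$ lies in a compact subset of the $\lambda$-Hölder space (goodness of $\Lambda$, together with the topology chosen in Proposition~2.5). Extracting a subsequential limit $\psi_\infty \in C_\rho$, lower semicontinuity of $\Lambda$ and continuity of $F$ give $F(\psi_\infty) + \Lambda(\psi_\infty) \le a$, forcing $\psi_\infty \in \{\phi_1,\dots,\phi_n\}$ — contradicting $\psi_\infty \in C_\rho$. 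Thus $a_\rho > a$, and we fix any $d \in (a, a_\rho)$.

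The second step is the Varadhan-type upper bound
\[
\limsup_{\varepsilon \downarrow 0} \varepsilon^2 \log \mathbb{E}\!\left[e^{-F(X^\varepsilon)/\varepsilon^2} \mathbf{1}_{X^\varepsilon \in C_\rho}\right] \le -a_\rho.
\]
This is standard once the LDP and continuity/boundedness of $F$ are in hand: cover the compact set $C_\rho \cap \{\Lambda \le M\}$ (where $M$ is chosen larger than $a_\rho + \sup F$) by finitely many open sets on which $F$ varies by at most $\eta$, apply the LDP upper bound on each of the corresponding closed balls, and use exponential tightness together with the obvious bound $e^{-F/\varepsilon^2} \le e^{\|F\|_\infty/\varepsilon^2}$ to control the contribution from $\{\Lambda > M\}$. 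Choosing $\eta$ small, the bound yields $\mathbb{E}[e^{-F(X^\varepsilon)/\varepsilon^2}\mathbf{1}_{X^\varepsilon \in C_\rho}] \le e^{-d/\varepsilon^2}$ for all $\varepsilon \le \varepsilon_0$, which combined with the opening display gives the lemma.

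The main obstacle is the gap argument and the Varadhan step, both of which depend essentially on the LDP holding in the $\lambda$-Hölder topology (rather than the weaker uniform topology): the set $C_\rho$ is $\lambda$-Hölder closed but typically not uniformly closed, so without the strong topology the gap $a_\rho > a$ could fail. This is exactly why Proposition~2.5 is stated in $\lambda$-Hölder norm. Handling an unbounded $f$ (if needed later) is a minor technicality addressed by the exponential moment estimates of Lemma~\ref{lem:apriori}, at the cost of applying Hölder's inequality before the Varadhan step.
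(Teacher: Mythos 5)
Your argument is correct and is precisely the standard expansion of what the paper compresses into the one-line remark ``This is a consequence of the large deviation principle'': you reduce to bounding $\mathbb{E}[e^{-F/\varepsilon^2}\mathbf{1}_{C_\rho}]$, establish the strict gap $a_\rho>a$ via goodness of the rate function and H1, and conclude with the Varadhan upper bound over the closed set $C_\rho$. The approach is the same as the paper's; you have simply supplied the details the authors omit, and your remark that the $\lambda$-Hölder topology is essential (so that $C_\rho$ is closed and the gap argument works) is exactly the right point to emphasize.
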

\begin{proof}
This is a consequence of the large deviation principle.
\end{proof}
Assume that $n=1$, i.e., $F+\Lambda$ attains its minimum at only one path $\phi$. Let $$J_\rho(\varepsilon)=\me\left[f(X_T^\varepsilon)e^{-F(X_T^\varepsilon)/\varepsilon^2}, X^\varepsilon\in B(\phi,\rho)\right].$$
The above lemma tells us that to study the asymptotic behavior of $J(\varepsilon)$ as $\varepsilon\downarrow 0$, it is suffice to study that of $J_\rho(\varepsilon)$.

\subsection{Stochastic Taylor expansion and Laplace approximation}
In this section, we prove an asymptotic expansion for $J_\rho(\varepsilon)$. 

Let $\phi$ be the unique path that minimizes $F+\Lambda$. There exists a $\gamma\in\msh_H$ such that $$\phi=\Phi(\gamma),\quad\quad \mathrm{and}\ \Lambda(\phi)=\frac{1}{2}\|\gamma\|^2_{\msh_H},$$ and for all $k\in\msh_H-\{0\}$:
$$d^2(F\circ\Phi+\frac{1}{2}\|\ \|^2_{\msh_H})(\gamma)k^2>0.$$

Let
$$\chi(k)=d\Phi(\gamma)k\quad\quad\mathrm{and}\ \ \psi(k,k)=d^2\Phi(\gamma)(k,k).$$
We have
\begin{align}\label{chi}
d\chi_t=\sigma(\phi_t)dk_t+\partial_x\sigma(\phi_t)\chi_t d\gamma_t+\partial_xb(0,\phi_t)\chi_tdt,
\end{align}
and
\begin{align}\label{psi}
d\psi_t=&2\partial_x\sigma(\phi_t)\chi_tdk_t+\partial^2_{xx}\sigma(\phi_t)\chi^2_td\gamma_t+\partial_x\sigma(\phi_t)\psi_td\gamma_t\\
&+\partial^2_{xx}b(0,\phi_t)\chi^2_tdt+\partial_xb(0,\phi_t)\psi_tdt.\nonumber
\end{align}
Here $\chi_0=\phi_0=0$. These formula will be useful later.

Consider the following stochastic differential equation
$$Z^\varepsilon_t=x+\int_0^t\sigma(Z^\varepsilon_s)(\varepsilon dB_s+d\gamma_s)+\int_0^tb(\varepsilon,Z^\varepsilon_s)ds.$$
It is clear that $Z^0=\phi$. Denote $Z_t^{m,\varepsilon}=\partial_\varepsilon^mZ^\varepsilon_t$ and consider the Taylor expansion with respect to $\varepsilon$ near $\varepsilon=0$, we obtain
$$Z^\varepsilon=\phi+\sum_{j=0}^N\frac{g_j\varepsilon^j}{j!}+\varepsilon^{N+1}R^\varepsilon_{N+1},$$
where $g_j=Z^{j,0}.$ Explicitly, we have
\begin{align*}
dg_1(s)=\sigma(\phi_s)dB_s+\partial_x\sigma(\phi_s)g_1(s)d\gamma_s+\partial_xb(0,\phi_s)g_1(s)ds+\partial_\varepsilon b(0,\phi_s)ds.
\end{align*}

Similar to the Brownian motion case, we have the following estimates, the proof of which is postponed to Appendix. 
\begin{lemma}\label{th: key lemma1}For any $t\in[0,T]$, there exists a constant $C>0$ such that for $r$ large enough we have
\begin{align*}
&\mp\{\|g_1\|_{\lambda,t}\geq r\}\leq\exp\left\{-\frac{C r^2}{t^{2H}}\right\}\\
&\mp\{\|g_2\|_{\lambda,t}\geq r\}\leq\exp\left\{-\frac{C r}{t^{2H}}\right\}.
\end{align*}
and
\begin{align*}
&\mp\{\|\varepsilon R_1^\varepsilon\|_{\lambda, t}\geq r; t\leq T^\varepsilon\}\leq \rho\\
&\mp\{\|\varepsilon R_2^\varepsilon\|_{\lambda,t}\geq r; t\leq T^\varepsilon\}\leq \exp\left\{-\frac{C r^2}{\rho t^{2H}}\right\}\nonumber \\
&\mp\{\|\varepsilon R_3^\varepsilon\|_{\lambda,t}\geq r; t\leq T^\varepsilon\}\leq \exp\left\{-\frac{C r}{\rho t^{2H}}\right\},\nonumber
\end{align*}
Here $T^\varepsilon$ is the first exist time of $Z^\varepsilon$ from  $B(\phi,\rho)$.
\end{lemma}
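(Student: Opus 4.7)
The plan is to recognize that each $g_j$ lies in the $j$-th inhomogeneous Wiener chaos over the underlying fBm (plus lower-order drift terms), which accounts for the Gaussian/sub-exponential split between the tails, and to handle the remainders $R_k^\varepsilon$ pathwise by a Gronwall argument on the exit-time event. The first step is to differentiate the equation for $Z^\varepsilon$ in $\varepsilon$ at $\varepsilon=0$ and solve the resulting linear equations by variation of constants around the Jacobian flow $J(t,s):=\partial_x\phi_t(x)\partial_x\phi_s(x)^{-1}$ of the deterministic path $\phi$. This gives explicit integral representations for $g_1$ (linear in $dB$ and bounded drift), $g_2$ (quadratic source in $g_1$), and inductively for higher $g_j$.

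For $g_1$, the representation
\[
g_1(t)=\int_0^t J(t,s)\sigma(\phi_s)\,dB_s+\int_0^t J(t,s)\,\partial_\varepsilon b(0,\phi_s)\,ds
\]
shows that $g_1$ is a continuous linear functional of $B$ with values in $C^\lambda([0,T];\mr^d)$, hence a Gaussian element of that Banach space. Applying the fBm self-similarity $(B_{tu}/t^H)_{u\in[0,1]}\stackrel{d}{=}(\widetilde B_u)_{u\in[0,1]}$ pulls out a factor of $t^H$ in front of the stochastic integral, and Borell's inequality on $C^\lambda([0,1];\mr^d)$ then yields the Gaussian tail $\exp\{-Cr^2/t^{2H}\}$ for $r$ large enough, with the subtracted median absorbed into the choice of $C$. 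For $g_2$ the same variation-of-constants recipe places it in the second Wiener chaos (plus a first-chaos/drift correction), and the sharp hypercontractive tail estimate for second-chaos random variables (Nelson/Borell; see Janson) combined with the same scaling argument gives the sub-exponential bound $\exp\{-Cr/t^{2H}\}$.

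For the remainders, differentiate the equation for $Z^\varepsilon$ in $\varepsilon$ up to order $k+1$ and stop at the first exit time $T^\varepsilon$ of $Z^\varepsilon$ from $B(\phi,\rho)$; on $\{t\le T^\varepsilon\}$ the coefficients $\sigma,b$ and all their derivatives, evaluated along $Z^\varepsilon$, are uniformly bounded. Writing the equation for $R_k^\varepsilon=\varepsilon^{-k-1}(Z^\varepsilon-\phi-\sum_{j=1}^k g_j\varepsilon^j/j!)$ as a linear Young equation driven by $\varepsilon dB+d\gamma$ with a remainder source polynomial in $g_1,\dots,g_k$ and $\varepsilon R_k^\varepsilon$ itself, and applying Remark \ref{remark 1} together with a Young-type Gronwall lemma, one bounds $\|\varepsilon R_k^\varepsilon\|_{\lambda,t}$ by a polynomial in $\varepsilon$ and $\|g_j\|_{\lambda,t}$, $j\le k$. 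Inserting the tail bounds from the previous step produces the claimed estimates, the factor $\rho$ in the denominator coming from the product of $r$ and the size of the stopped solution, and the first bound $\le\rho$ for $R_1^\varepsilon$ reflecting the fact that $\|\varepsilon R_1^\varepsilon\|_{\lambda,t}$ is deterministically controlled by $\rho$ up to $T^\varepsilon$.

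The main obstacle is the last step: pathwise Young-Gronwall estimates for fBm-driven equations have no Itô isometry to rely on, so the only way to propagate the chaos-based tails through the nonlinear Taylor remainder is to work on the exit-time event, carefully separating the parts of the source that are polynomial in the $g_j$'s from the part that feeds back through $R_k^\varepsilon$, and then to match the $t^{2H}$ scaling uniformly. Tracking the $\rho$ dependence in the exponent, so that the ball radius enters linearly (as in the statement) while the chaos order determines the power of $r$, is the delicate part of the bookkeeping and is where the technical difficulty truly sits.
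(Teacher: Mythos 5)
Your proposal is correct and structurally identical to the paper's argument: you write the linear equations for $g_1,g_2$ obtained by differentiating the equation for $Z^\varepsilon$ at $\varepsilon=0$, solve them by variation of constants around the Jacobian flow $J(t,s)=\partial_x\phi_t\,(\partial_x\phi_s)^{-1}$, and for the remainders you work pathwise on the exit-time event $\{t\le T^\varepsilon\}$ and run a Young--Gronwall estimate exploiting the deterministic bound $\|\varepsilon R_1^\varepsilon\|_{\lambda,t}\le\rho$ to extract factors of $\rho$. That is exactly what the paper does. The one point where your route diverges from the text is in how the tail bounds for $g_1$ and $g_2$ are obtained. You appeal to the Wiener-chaos structure ($g_1$ is Gaussian as a $C^\lambda$-valued element, so Borell's inequality applies; $g_2$ lives in the second inhomogeneous chaos, so hypercontractive tail estimates give $\exp\{-Cr/t^{2H}\}$). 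The paper instead stays purely pathwise: using Remark~\ref{remark 1} (Young integral norm estimates) and Gronwall on equations (\ref{g1})--(\ref{g2}), it shows $\|g_1\|_{\lambda,t}\le C\|B\|_{\lambda,t}$ and $\|g_2\|_{\lambda,t}\le C\|B\|_{\lambda,t}^2$, and then needs only the single fact that $\|B\|_{\lambda,t}$ has a Gaussian tail with variance proxy $t^{2H}$ (self-similarity plus Fernique). Both arguments are valid and yield the same scaling; the paper's route is a bit more economical, since it uses the exact same Young--Gronwall machinery for the $g_j$'s as for the $R_k^\varepsilon$'s and does not need any chaos or hypercontractivity input, while yours makes the chaos order of each term explicit, which is conceptually cleaner but requires more probabilistic technology. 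You have also correctly identified where the bookkeeping difficulty lies: on $\{t\le T^\varepsilon\}$ the terms $\mu_i,\nu_i$ in the remainder equations are of the form (bounded function of $\varepsilon R_1$) $\times$ (power of $R_1$), which is precisely how the factor $\rho$ enters the exponents in the statement.
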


Let $\theta(\varepsilon)=F(Z_T^\varepsilon)$. By Taylor expansion of $\theta(\varepsilon)$ with respect to $\varepsilon$, we obtain
$$\theta(\varepsilon)=\theta(0)+\varepsilon\theta'(0)+\varepsilon^2U(\varepsilon).$$
Here
$$U(\varepsilon)=\int_0^1(1-v)\theta''(\varepsilon v)dv,\quad\mathrm{and}\ \ \ \theta(0)=F(\phi).$$

\begin{lemma}\label{th: theta 1}With the above notation, we have
\begin{align*}\theta'(0)&=dF(\phi)g_1
=-\int_0^T\big((K^*_H)^{-1}\dot{(K_H^{-1}\gamma)}\big)_sdB_s+dF(\phi)Y.\end{align*}
Here $Y$ is the solution of
$$dY_s=\partial_x\sigma(\phi_s)Y_sd\gamma_s+\partial_\varepsilon b(0,\phi_s)ds+\partial_x b(0,\phi_s)Y_sds,\quad\ Y(0)=0.$$
\end{lemma}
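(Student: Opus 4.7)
My plan is to start from the chain-rule identity $\theta'(0) = dF(\phi)g_1$, which follows at once from $\theta(\varepsilon) = F(Z_T^\varepsilon)$ together with $Z^0 = \phi$ and $\partial_\varepsilon Z^\varepsilon|_{\varepsilon=0} = g_1$. The strategy is then to split $g_1 = \chi(B) + Y$, where $Y$ is the deterministic path in the statement and $\chi(B)$ is the linearized flow driven by the Brownian sample $B$. Subtracting the defining equation of $Y$ from the SDE for $g_1$ recorded above the lemma shows that $g_1 - Y$ satisfies the linearized equation (\ref{chi}) with $k$ replaced formally by $B$; because that equation is linear in its driver, its solution admits the variation-of-constants representation $\chi_t(B) = \int_0^t Q(t,s)\sigma(\phi_s)\, dB_s$, valid as a Young integral since $Q(t,\cdot)\sigma(\phi)$ and $B$ are both $\lambda$-H\"older with $\lambda > 1/2$. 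The claim then reduces to identifying $dF(\phi)\chi(B)$ with the stochastic integral $-\int_0^T ((K_H^*)^{-1}\dot{(K_H^{-1}\gamma)})_s\, dB_s$.

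For this identification I will first work with deterministic $k\in\msh_H$ and write $dF(\phi)\chi(k) = \int_0^T a_s\, dk_s$ for some $\lambda$-H\"older kernel $a$ depending only on $F$, $\phi$ and $\gamma$; for cylindrical $F$ this is immediate from the variation-of-constants formula above, and the general smooth case follows by approximation together with the continuity of the Young integral. Next I will invoke the first-order condition coming from assumption H~1, namely that $\gamma$ is a critical point of $F\circ\Phi + \tfrac{1}{2}\|\cdot\|^2_{\msh_H}$, which yields
\[
dF(\phi)\chi(k) + \langle\gamma, k\rangle_{\msh_H} = 0, \qquad k\in\msh_H.
\]
Writing $h = K_H^{-1}\gamma$ and $g = K_H^{-1}k$, the right-hand inner product equals $\int_0^T \dot h_s \dot g_s\, ds$, while the $\msh_H$-$\msh$ isometry and the adjoint property of $K_H^*$ give $\int_0^T a_s\, dk_s = \int_0^T (K_H^* a)_s\, \dot g_s\, ds$. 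Since $g$ ranges over all of $\msh$, comparison forces $K_H^* a = -\dot h$, i.e.\ $a = -(K_H^*)^{-1}\dot{(K_H^{-1}\gamma)}$.

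Finally I will extend the pairing $k\mapsto \int_0^T a_s\, dk_s$ from Cameron-Martin paths to the random path $B$: since $a$ and $B$ both lie in $C^\lambda$ with $\lambda > 1/2$, the Young integral $\int_0^T a_s\, dB_s$ is well defined and agrees with the deterministic pairing on $\msh_H$ by continuity, so setting $k = B$ gives
\[
dF(\phi)\chi(B) = -\int_0^T ((K_H^*)^{-1}\dot{(K_H^{-1}\gamma)})_s\, dB_s.
\]
Combined with the decomposition $g_1 = \chi(B) + Y$ this proves the lemma. The hardest point in this plan is the integral representation $dF(\phi)\chi(k) = \int_0^T a_s\, dk_s$ — establishing it in enough generality to cover the class of functionals $F$ used later, and justifying its extension to the rough path $B$ — whereas, once that representation is available, the identification of $a$ via the Euler-Lagrange equation is a purely algebraic manipulation through $K_H^*$.
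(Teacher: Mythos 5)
Your overall structure matches the paper's: the decomposition $g_1=d\Phi(\gamma)B+Y$ (your $\chi(B)+Y$), the Euler--Lagrange condition $dF(\phi)(d\Phi(\gamma)k)+\langle\gamma,k\rangle_{\msh_H}=0$, and the manipulation through $K_H^*$ to expose the kernel $-(K_H^*)^{-1}\dot{(K_H^{-1}\gamma)}$ are all present in the paper's proof, in the same order. The one genuine divergence is a logical detour that you yourself flag as the ``hardest point'': you first posit an abstract integral representation $dF(\phi)\chi(k)=\int_0^T a_s\,dk_s$ with an $a$ to be determined, and only then feed the Euler--Lagrange identity into it. The paper never needs this step, because the Euler--Lagrange identity \emph{directly produces} the desired integral form: $dF(\phi)(d\Phi(\gamma)k)=-\langle\gamma,k\rangle_{\msh_H}=-\int_0^T\dot{(K_H^{-1}\gamma)}_s\dot{(K_H^{-1}k)}_s\,ds$, and a Fubini computation with $\partial K_H/\partial t$ converts this into $-\int_0^T((K_H^*)^{-1}\dot{(K_H^{-1}\gamma)})_s\,dk_s$. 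Dropping your a priori representation step eliminates the need to justify it for non-cylindrical path functionals $F$, which, as you note, is the delicate point of your route.

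The second place where the two arguments genuinely differ is the passage from Cameron--Martin test paths $k$ to the rough path $B$. You invoke $\lambda$-H\"older regularity of the kernel $a=-(K_H^*)^{-1}\dot{(K_H^{-1}\gamma)}$ and appeal to Young integration, but this H\"older regularity is asserted rather than proved, and is not evident from the formula for $a$. The paper instead uses the sup-norm continuity bound $\|d\Phi(\gamma)k\|_\infty\le C\|k\|_\infty$ from Gronwall applied to (\ref{chi}); combined with the Euler--Lagrange identity, this shows the linear functional $k\mapsto\int_0^T a_s\,dk_s$ is continuous in the uniform topology, whence $a$ has bounded variation and the pairing with the continuous path $B$ is a classical Riemann--Stieltjes integral. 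This is the mechanism you should use to close the extension step; Young integrability of $a$ against $B$ is strictly more than is needed and not obviously available.
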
 
\begin{proof}
By an easy application of the Gronwall's inequality to (\ref{chi}), we have for any $k\in\msh_H$, 
\begin{align}\label{continuity}\|d\Phi(\gamma)k\|_\infty\leq C\|k\|_\infty\end{align} for some positive constant $C$. Therefore, $d\Phi(\gamma)$ can be extended continuously to an operator on $P(\mr^d)$. We have 
\begin{align*}g_1=d\Phi(\gamma)B+Y.\end{align*}
On the other hand, since $\gamma$ is a critical point of $F\circ\Phi+1/2\|\cdot \|^2_{\msh_H}$ and note $\|k\|_{\msh_H}=\|K_H^{-1}k\|_\msh$, we have
\begin{align}\label{idty}dF(\phi)(d\Phi(\gamma)k)&=-\int_0^T\dot{(K_H^{-1}\gamma)}_s\dot{(K_H^{-1}k)}_sds\\
&=-\int_0^T\big((K^*_H)^{-1}\dot{(K_H^{-1}\gamma)}\big)_sdk_s\nonumber
\end{align}
for all $k\in\msh_H.$ The second equation above can be seen as follows. Denote by
$$h=K_H^{-1}k.$$
We have
\begin{align*}
\int_0^T\big((K^*_H)^{-1}\dot{(K_H^{-1}\gamma)}\big)_sdk_s
=&\int_0^T\big((K^*_H)^{-1}\dot{(K_H^{-1}\gamma)}\big)_s\int_0^s\frac{\partial K_H}{\partial s}(s,u)\dot{h}_ududs\\
=&\int_0^T\dot{h}_u\int_u^T\big((K^*_H)^{-1}\dot{(K_H^{-1}\gamma)}\big)_s\frac{\partial K_H}{\partial s}(s,u)ds\\
=&\int_0^T\dot{h}_u\dot{(K_H^{-1}\gamma)}_udu\\
=&\int_0^T\dot{(K_H^{-1}\gamma)}_s\dot{(K_H^{-1}k)}_sds.
\end{align*}

From (\ref{continuity}) and (\ref{idty}) we conclude that the path $(K^*_H)^{-1}\dot{(K_H^{-1}\gamma)}$ has bounded variation and hence, by passing to limit, we obtain
$$dF(\phi)(d\Phi(\gamma)B)=-\int_0^T\big((K^*_H)^{-1}\dot{(K_H^{-1}\gamma)}\big)_sdB_s.$$
The proof is completed.

\end{proof}

Now, by Theorem \ref{C-M} we have
\begin{align*}&J_\rho(\varepsilon)\\
=&\me\left[f(Z^\varepsilon)\exp\left(-\frac{F(Z^\varepsilon)}{\varepsilon^2}\right)\exp\left(-\frac{1}{\varepsilon}\int_0^T\big((K^*_H)^{-1}(\dot{K_H^{-1}{\gamma}})\big)_sdB_s-\frac{\|\gamma\|^2_{\msh_H}}{2\varepsilon^2}\right); Z^\varepsilon\in B(\phi,\rho)\right]\\
=&\me\big[V(\varepsilon);Z^\varepsilon\in B(\phi,\rho)\big]\exp\left[-\frac{1}{\varepsilon^2}\left(F(\phi)+\frac{1}{2}\|\gamma\|^2_{\msh_H}\right)\right]\exp\left[-\frac{dF(\phi)Y}{\varepsilon}\right]\\
=&\me\big[V(\varepsilon); Z^\varepsilon B(\phi,\rho)\big]\exp\left[-\frac{a}{\varepsilon^2}\right]\exp\left[-\frac{dF(\phi)Y}{\varepsilon}\right].
\end{align*}
In the above
$$V(\varepsilon)=f(Z^\varepsilon)e^{-U(\varepsilon)}.$$

To prove the Laplace approximation, it now suffices to estimate $\me\big[V(\varepsilon); Z^\varepsilon\in B(\phi,\rho)\big]$. For this purpose, we need the following two technical lemmas.

\begin{lemma}\label{th: key lemma2}
Let$$\theta(\varepsilon)=F(Z^\varepsilon)=\theta(0)+\varepsilon\theta'(0)+\varepsilon^2U(\varepsilon)$$
where $$U(\varepsilon)=\int_0^1(1-v)\theta''(\varepsilon v)dv,\quad\quad\mathrm{and}\quad\theta(0)=F(\phi).$$ There exist $\beta>0$ and $\varepsilon_0>0$ such that
$$\sup_{0\leq \varepsilon\leq\varepsilon_0}\me\left(e^{-(1+\beta)U(\varepsilon)}; t\leq T^\varepsilon\right)<\infty.$$
\end{lemma}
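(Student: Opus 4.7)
The plan is to decompose $U(\varepsilon)=U(0)+(U(\varepsilon)-U(0))$ and control the two contributions separately. By Cauchy--Schwarz,
\[
\me\left[e^{-(1+\beta)U(\varepsilon)};t\leq T^\varepsilon\right]\leq \me\left[e^{-2(1+\beta)U(0)}\right]^{1/2}\me\left[e^{-2(1+\beta)(U(\varepsilon)-U(0))};t\leq T^\varepsilon\right]^{1/2},
\]
so it suffices to bound the two factors on the right uniformly in small $\varepsilon$.

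For the second factor, I would expand $\theta''(\varepsilon v)-\theta''(0)$ as a Taylor remainder in $\varepsilon$ whose coefficients are smooth polynomials in the processes $Z^\varepsilon$, $Z^{1,\varepsilon}=g_1+\varepsilon R_1^\varepsilon$ and $Z^{2,\varepsilon}=g_2+\varepsilon R_2^\varepsilon$, evaluated at intermediate $\varepsilon$-values. On the event $\{t\leq T^\varepsilon\}$ the path $Z^\varepsilon$ stays inside $B(\phi,\rho)$, and the $\lambda$-H\"older stability in Remark \ref{remark 1} combined with the tail bounds of Lemma \ref{th: key lemma1} on $g_1$, $g_2$, and each $R_i^\varepsilon$ (with $\rho$ small enough) yield a stretched-exponential tail for $|U(\varepsilon)-U(0)|$ uniformly in $\varepsilon\in[0,\varepsilon_0]$, which is more than enough for a uniformly bounded exponential moment.

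For the first factor, I would rewrite $U(0)=\tfrac{1}{2}d^2F(\phi)(g_1,g_1)+\tfrac{1}{2}dF(\phi)g_2$ using the decomposition $g_1=d\Phi(\gamma)B+Y$ from Lemma \ref{th: theta 1} and the analogous second-variation formula (\ref{psi}), obtaining a quadratic form in $B$ plus a linear functional of $B$ plus deterministic pieces. The quadratic part is exactly the one associated with the Hilbert--Schmidt self-adjoint operator $A$ on $\msh$ built in Lemma \ref{H2}. Assumption H\,2 forces $I+A>0$ on $\msh$, so the smallest eigenvalue $\lambda_0$ of $A$ satisfies $\lambda_0>-1$, and $\beta>0$ may be chosen small enough that $I+2(1+\beta)A\geq\delta I$ for some $\delta>0$. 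Diagonalising $A$ and applying the Gaussian moment-generating formula eigenmode by eigenmode (after pushing everything onto the standard Wiener space via $K_H^*$) gives $\me[e^{-2(1+\beta)U(0)}]<\infty$, the linear/deterministic remainders being absorbed by a Cameron--Martin translation in the sense of Theorem \ref{C-M}.

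The main obstacle is this first factor. One has to identify $U(0)$ rigorously as a genuine second Wiener chaos in the underlying standard Brownian motion $\beta$ and verify that the quadratic operator appearing there is the same Hilbert--Schmidt $A$ of Lemma \ref{H2}; only after that transfer is Fernique-type exponential integrability available. Keeping track of the linear and drift contributions without eating into the margin $\beta$ required for $I+2(1+\beta)A>0$ is where care is needed, and is the fractional-Brownian analogue of the delicate step in Ben Arous' argument in \cite{Ben Arous}.
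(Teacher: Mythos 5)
Your decomposition $U(\varepsilon)=U(0)+(U(\varepsilon)-U(0))$ is essentially the paper's (it writes $U(\varepsilon)=\tfrac12\theta''(0)+\varepsilon R(\varepsilon)$), and your treatment of the remainder — expanding $\theta''(\varepsilon v)-\theta''(0)$ and controlling it in $\lambda$-H\"older norm on $\{t\le T^\varepsilon\}$ via Lemma~\ref{th: key lemma1} — matches the argument in the appendix. But there are two substantive issues.

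First, the Cauchy--Schwarz step is too crude and actually breaks the argument. Splitting via Cauchy--Schwarz forces you to bound $\me\,e^{-2(1+\beta)U(0)}$. The positivity you can extract from Assumption~H\,2 is only $I+A>0$, i.e.\ the smallest eigenvalue $\lambda_0$ of $A$ satisfies $\lambda_0>-1$. This gives $I+(1+\beta)A>0$ for small $\beta>0$, but it does \emph{not} give $I+2(1+\beta)A\geq\delta I$: if $\lambda_0\in(-1,-\tfrac12)$, the operator $I+2A$ already has a negative eigenvalue, and the Gaussian moment generating function you want to invoke diverges. What you need is H\"older with conjugate exponents $p,q$, taking $p$ close to $1$: then $\me\,e^{-p(1+\beta)U(0)}$ is finite as long as $p(1+\beta)<-1/\lambda_0$, and the complementary factor for $U(\varepsilon)-U(0)$ only requires exponential moments of all orders, which Lemma~\ref{th: key lemma1} provides after shrinking $\rho$. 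The paper's formulation — that the estimate on $\varepsilon R(\varepsilon)$ holds with an \emph{arbitrary} exponent $(1+\alpha)$ at the cost of shrinking $\rho$ — is exactly what makes this H\"older step close.

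Second, for the core estimate on $U(0)$ the paper does not diagonalise $A$ or invoke a Fernique-type bound on a second Wiener chaos. It proves the tail estimate $\mp\{-\tfrac12[dF(\phi)g_2+d^2F(\phi)g_1^2]\geq r\}\leq e^{-Cr}$ with $C>1$ by a large deviation argument: the pair $Y^\varepsilon=(\varepsilon g_1,\varepsilon^2 g_2)$ solves a scaled SDE to which the LDP applies, and one shows that the rate function of the set $\{\psi:dF(\phi)\psi_2+d^2F(\phi)\psi_1^2\leq-2\}$ is strictly greater than $1$ — which is precisely the content of Assumption~H\,2 once the rate function is written through $\Phi^*$, using that $\Phi^*(k)=(d\Phi(\gamma)k,\,d^2\Phi(\gamma)k^2)$. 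This sidesteps the need to identify $U(0)$ rigorously as a second chaos in the underlying $\beta$ and to disentangle the quadratic-in-$B$ part from the linear and constant pieces contributed by $Y$. Your chaos route is viable in principle, but the $K_H^*$ transfer and the drift bookkeeping you flag at the end are exactly what the LDP route lets the authors avoid.
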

\begin{proof}
See Appendix.
\end{proof}
\begin{lemma}\label{th: key lemma3}
For all $m>0$ and $p\geq 2$, there exists an $\varepsilon_0>0$ such that
$$\sup_{\varepsilon\leq\varepsilon_0}\me\left(\sup_{t\in[0,1]}|\partial_\varepsilon^m Z_t^\varepsilon|^p\right)<\infty.$$
\end{lemma}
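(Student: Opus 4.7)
The plan is to proceed by induction on $m$, strengthening the claim to a uniform $L^p$ bound on the H\"older norm $\|Z^{m,\varepsilon}\|_{\lambda,1}$ (which dominates the sup-norm). The base case $m=0$ follows from Lemma \ref{lem:apriori}, refined via the Young integral estimate of Remark \ref{remark 1} applied to the defining equation for $Z^\varepsilon$: this yields a pathwise bound on $\|Z^\varepsilon\|_{\lambda,1}$ depending exponentially on $\|B\|_{\lambda,1}$ and $\|\gamma\|_{\lambda,1}$, from which uniform $L^p$ moments for $\varepsilon\leq\varepsilon_0$ follow by Fernique's theorem applied to the Gaussian variable $\|B\|_{\lambda,1}$ (note $1/\lambda < 2$ since $\lambda > 1/2$).

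For the inductive step, I would formally differentiate the equation
$$Z^\varepsilon_t = x + \int_0^t \sigma(Z^\varepsilon_s)(\varepsilon\, dB_s + d\gamma_s) + \int_0^t b(\varepsilon, Z^\varepsilon_s)\, ds$$
$m$ times in $\varepsilon$. By the Fa\`a di Bruno formula, $Z^{m,\varepsilon} := \partial_\varepsilon^m Z^\varepsilon$ satisfies a linear Young equation of the form
$$Z^{m,\varepsilon}_t = \int_0^t \partial_x\sigma(Z^\varepsilon_s)\, Z^{m,\varepsilon}_s\, (\varepsilon\, dB_s + d\gamma_s) + \int_0^t \partial_x b(\varepsilon, Z^\varepsilon_s)\, Z^{m,\varepsilon}_s\, ds + R^{m,\varepsilon}_t,$$
where the inhomogeneous term $R^{m,\varepsilon}$ is a polynomial expression in the lower-order derivatives $Z^{j,\varepsilon}$ ($j < m$), integrated against $dB$, $d\gamma$, or $ds$, with coefficients that are mixed partial derivatives of $\sigma$ and $b$ evaluated at $Z^\varepsilon$. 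Since $\sigma$ and $b$ have bounded derivatives of every order, the inductive hypothesis combined with Remark \ref{remark 1} shows that $\|R^{m,\varepsilon}\|_{\lambda,1}$ has uniformly bounded $L^p$ moments for every $p \geq 2$. Applying the same Young-Gronwall estimate as in the base case to the linear equation for $Z^{m,\varepsilon}$ then yields a bound of the shape
$$\|Z^{m,\varepsilon}\|_{\lambda, 1} \leq C\, \|R^{m,\varepsilon}\|_{\lambda,1}\, \exp\!\Bigl(C' \bigl(\varepsilon\|B\|_{\lambda,1} + \|\gamma\|_{\lambda,1}\bigr)^{1/\lambda}\Bigr),$$
and taking $L^p$-expectations, using H\"older's inequality and Fernique, closes the induction.

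The main technical obstacle is establishing the pathwise Young-Gronwall bound with the stated exponential dependence on $\|B\|_{\lambda,1}$: the classical differential Gronwall lemma does not apply directly to Young integrals, and one must be careful about how the H\"older norm of the driver appears. A standard remedy is to partition $[0,1]$ into finitely many subintervals on which the driver has small $\lambda$-H\"older seminorm, run a contraction-type (Picard) estimate on each subinterval that controls $\|Z^{m,\varepsilon}\|_{\lambda}$ by the data plus a small factor times itself, and then concatenate the pieces; the number of subintervals scales like $(\varepsilon\|B\|_{\lambda,1}+\|\gamma\|_{\lambda,1})^{1/\lambda}$, producing the exponential dependence on the global H\"older norm with precisely that exponent. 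Once this estimate is in hand, the stated $L^p$ bound on $\sup_{t\in[0,1]}|Z^{m,\varepsilon}_t|$ is immediate.
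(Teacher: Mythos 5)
Your proof is correct and amounts to a detailed, inductive unpacking of what the paper leaves implicit: the paper's entire proof is the one-line citation ``This is a consequence of Lemma \ref{lem:apriori}.'' That citation cannot be read as a single application of Lemma \ref{lem:apriori} to the joint system $(Z^\varepsilon, \partial_\varepsilon Z^\varepsilon,\ldots,\partial_\varepsilon^m Z^\varepsilon)$, because the coefficients of the differentiated equations (e.g.\ $\partial_x\sigma(Z^\varepsilon)\,\partial_\varepsilon Z^\varepsilon$, viewed as a function of the joint state) do not have globally bounded derivatives --- their derivatives grow linearly in the higher components. So the intended meaning is that the Hu--Nualart machinery is applied level by level to the linear (in $\partial_\varepsilon^m Z^\varepsilon$) equation at each order, which is exactly what you spell out: a Young--Gronwall subinterval estimate giving a pathwise bound of the form $\exp\bigl(C(\varepsilon\|B\|_{\lambda,1}+\|\gamma\|_{\lambda,1})^{1/\lambda}\bigr)$, followed by Fernique (using $1/\lambda < 2$ since $\lambda > 1/2$) and H\"older to close the induction. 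One minor point worth making explicit if you were to write this out in full: the inductive hypothesis should carry $L^p$ moments of $\|Z^{j,\varepsilon}\|_{\lambda,1}$ for \emph{all} $p\ge 2$ and all $j<m$ (not a fixed $p$), since products of lower-order derivatives appear in $R^{m,\varepsilon}$ and H\"older's inequality raises the exponent. Your argument already does this implicitly by quantifying over all $p$, so the induction closes. In short: same approach as the paper, with the missing details filled in.
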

\begin{proof}
This is a consequence of Lemma \ref{lem:apriori}.
\end{proof}

Denote $V^{(m)}(\varepsilon)=\partial^m_\varepsilon V(\varepsilon)$. By Lemma \ref{th: key lemma2} and Lemma \ref{th: key lemma3}, one can show
$$\me|V^{(m)}(0)|^p<\infty,\quad\quad\quad\mathrm{for\ all}\ p>1, m>0.$$
Consider the stochastic Taylor expansion for $V(\varepsilon)$
$$V(\varepsilon)=\sum_{m=0}^N \frac{\varepsilon^mV^{(m)}(0)}{m!}+\varepsilon^{N+1}S^\varepsilon_{N+1}$$
where
$$S^\varepsilon_{N+1}=\int_0^1\frac{V^{(N+1)}(\varepsilon v)(1-v)^N}{N!}dv.$$
It can be shown, again by Lemma \ref{th: key lemma2} and Lemma \ref{th: key lemma3} (cf, Ben Arous\cite{Ben Arous}), 
$$\sup_{0\leq\varepsilon\leq\varepsilon_0}\me\big[|S^\varepsilon_{N+1}|; Z^\varepsilon\in B(\phi),\rho)\big]<\infty.$$
Thus we conclude that
$$\me\big[V(\varepsilon); Z^\varepsilon\in B(\phi,\rho)\big]=\sum_{m=0}^{N}\alpha_m\varepsilon^m+O(\varepsilon^{N+1}).$$
Moreover, one can show
$$\alpha_m=\frac{\me V^{(m)}(0) }{m!}.$$

\section{Short-time expansion for transition density}
We now arrive to the heart of our study and are interested in obtaining a short-time expansion for the density function of $X_t$, where
\begin{align}\label{equ X}dX_t=\sum_{i=1}^{d}V_i(X_t)dB^i_t, \quad X_0=x\end{align}
Here $V_i$'s are $C^\infty$ vector fields on $\mr^d$ with bounded derivatives to any order. Throughout this section, we shall also make the following assumption on the vector fields $V_i$'s.

\begin{assumption}\label{H}

\

\begin{itemize}
\item For every $x \in \mathbb{R}^d$, the vectors $V_1(x),\cdots,V_d(x)$ form a basis of $\mathbb{R}^d$.
\item There exist smooth and bounded functions $\omega_{ij}^l$ such that:
\[
[V_i,V_j]=\sum_{l=1}^d \omega_{ij}^l V_l,
\]
and
\[
\omega_{ij}^l =-\omega_{il}^j.
\]
\end{itemize}
\end{assumption}

The first assumption means that the vector fields  form an elliptic differential system. As a consequence of  Baudoin and Hairer\cite{baudoin-hairer},  it is known that the law of $X_t$, $t>0$, admits therefore a smooth density $p(t;x,y)$ with respect to Lebesgue measure. The second assumption is of geometric nature and actually means that the Levi-Civita connection associated with the Riemannian structure given by the vector fields $V_i$'s is 
\[
\nabla_X Y=\frac{1}{2} [X,Y].
\]
In a Lie group structure, this is equivalent to the fact that the Lie algebra is of compact type. We will see the use of this assumption in a section below.

The following theorem is the main result of our paper.
\begin{theorem}\label{main}
Fix $x\in \mr^d$. Assume that the assumption \ref{H} is satisfied, then in a neighborhood $V$ of $x$, the density function $p(t;x,y)$ of $X_t$ in (\ref{equ X}) has the following asymptotic expansion near $t=0$
\begin{align*}
p(t;x,y)=\frac{1}{(t^H)^{d}}e^{-\frac{d^2(x,y)}{2t^{2H}}}\bigg(\sum_{i=0}^N c_i(x,y)t^{2iH}+r_{N+1}(t,x,y)t^{2nH}\bigg),\quad\quad y\in V.
\end{align*}
Here $d(x,y)$ is the Riemannian distance between $x$ and $y$ determined by $V_1,...,V_d$. Moreover, we can chose $V$ such that $c_i(x,y)$ are $C^\infty$ in  $ V\times V\subset \mr^d\times \mr^d$,   and for all multi-indices $\alpha$ and $\beta$ 
$$\sup_{t\leq t_0}\sup_{(x,y)\in V\times V}|\partial^\alpha_x\partial^\alpha_y\partial^k_t r_{N+1}(t,x,y)|<\infty$$
for some $t_0>0$.
\end{theorem}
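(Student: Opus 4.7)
The plan is to follow the Azencott--Ben Arous strategy sketched in the introduction: reduce the short-time density expansion to a Laplace asymptotic on Wiener space via scaling and Fourier inversion, and then feed the resulting functional into the machinery of Section~3.

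First I would exploit the self-similarity of fBm, $B_{ct}\stackrel{d}{=}c^H B_t$, to deduce $X^x_t\stackrel{d}{=}X^{x,t^H}_1$ where $X^{x,\varepsilon}$ solves $dX^{x,\varepsilon}_s=\varepsilon\sum_i V_i(X^{x,\varepsilon}_s)dB^i_s$. Thus $p(t;x,y)=p_\varepsilon(x,y)$ with $\varepsilon=t^H$, and the theorem is reduced to obtaining an expansion of $p_\varepsilon(x,y)$ of the form $\varepsilon^{-d}e^{-d^2(x,y)/(2\varepsilon^2)}\bigl(\sum_i c_i(x,y)\varepsilon^{2i}+O(\varepsilon^{2(N+1)})\bigr)$. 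Next, for $(x,y)$ in a neighborhood of the diagonal I would construct, as in step (1)--(2) of the introduction, a smooth function $F(x,y,z)$ that is constant outside a small ball around $y$ and for which $z\mapsto F(x,y,z)+\tfrac12 d^2(x,z)$ has a unique non-degenerate minimum at $z=y$ with value $0$. Fourier inversion then gives
\begin{equation*}
p_\varepsilon(x,y)e^{d^2(x,y)/(2\varepsilon^2)}=\frac{1}{(2\pi\varepsilon)^d}\int_{\mathbb{R}^d}J_\varepsilon(\zeta)\,d\zeta,\quad J_\varepsilon(\zeta)=\mathbb{E}\Bigl[e^{i\zeta\cdot(X^{x,\varepsilon}_1-y)/\varepsilon}e^{-F(x,y,X^{x,\varepsilon}_1)/\varepsilon^2}\Bigr].
\end{equation*}

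For each fixed $\zeta$ I would apply the Laplace expansion of Section~3 to $J_\varepsilon(\zeta)$, with the functional $F$ as above and the bounded test functional $f_\varepsilon(X^\varepsilon)=e^{i\zeta\cdot(X^\varepsilon_1-y)/\varepsilon}$. Lifted to path space, $F\circ\Phi+\Lambda$ has infimum equal to $\inf_z\{F(x,y,z)+\tfrac12 d^2(x,z)\}=0$, attained at the unique minimizing geodesic $\phi$ from $x$ to $y$; Assumption~\ref{H} guarantees that this geodesic is well-defined and that the non-degeneracy hypothesis H2 of Section~3 holds. Since the SDE has no drift, the constant $c=dF(\phi)Y$ in Theorem~3.1 vanishes. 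Therefore
\begin{equation*}
J_\varepsilon(\zeta)=\sum_{m=0}^{N}\alpha_m(\zeta)\varepsilon^m+\varepsilon^{N+1}R_{N+1}(\varepsilon,\zeta),
\end{equation*}
with the $\varepsilon$-dependence of $f_\varepsilon$ handled by a stochastic Taylor expansion of $e^{i\zeta\cdot(X^\varepsilon_1-y)/\varepsilon}$ around $e^{i\zeta\cdot g_1(1)}$, where $g_1$ is the Gaussian first variation of Section~3. Because $|f_\varepsilon|\leq 1$ while its Taylor remainders produce only polynomial factors in $\zeta$, the tail estimates of Lemmas~\ref{th: key lemma1}--\ref{th: key lemma3} upgrade to uniform-in-$\zeta$ bounds after multiplication by Gaussian moment estimates, which is what one needs to justify integration in $\zeta$.

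The final step is to integrate the expansion in $\zeta$, which produces $\sum_m \varepsilon^m\int\alpha_m(\zeta)\,d\zeta$ plus a controlled remainder. The key observation, and the main place where the geometric hypothesis enters, is that all odd-order terms must drop out: the identity $\omega_{ij}^l=-\omega_{il}^j$ forces the Levi--Civita connection of the Riemannian metric dual to $V_1,\dots,V_d$ to be $\nabla_X Y=\tfrac12[X,Y]$, and I would use normal (exponential) coordinates centered at $y$ to exhibit an involution under which $(X^\varepsilon_1-y)/\varepsilon$ changes sign modulo even-order corrections; pairing this with the oscillatory factor $e^{i\zeta\cdot(\cdot)/\varepsilon}$ makes $\int\alpha_{2k+1}(\zeta)\,d\zeta=0$. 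This parity argument is the principal obstacle, as it mixes the $\zeta$-Fourier structure with the concrete form of the Gaussian fluctuations $g_j$ around the minimizer and is where the compact-type Lie structure assumption is really used. Smoothness of the coefficients $c_i(x,y)$ and the uniform bound on $r_{N+1}$ and its derivatives then follow from differentiating the Laplace expansion in the parameters $(x,y)$, which only enter through the smooth quantities $\phi$, $\gamma$, $F$, $\chi$, $\psi$ and survives intact in the estimates of Section~3 since those are based on Gronwall-type inequalities with parameter-uniform constants. Setting $\varepsilon=t^H$ and collecting terms yields the stated expansion.
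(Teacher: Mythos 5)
Your overall architecture matches the paper's: scaling $\varepsilon=t^H$, the function $F(x,y,z)$, Fourier inversion, Cameron--Martin translation, and the Laplace expansion of Section~3, run uniformly in $(x,y)$. But two of the load-bearing steps, as you've sketched them, are either misattributed or genuinely gapped.

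\textbf{Parity of the expansion.} You claim the vanishing of the odd-order coefficients is where the hypothesis $\omega_{ij}^l=-\omega_{il}^j$ enters, via a geometric involution in exponential coordinates. This is not how the cancellation works, and the structure equations play no role in it. The vanishing is a purely probabilistic parity fact: the shifted process $Z^\varepsilon$ is driven by $\varepsilon\,dB+d\gamma$ and is therefore invariant in law under $(B,\varepsilon)\mapsto(-B,-\varepsilon)$. Consequently $g_1^1$ is odd under $B\mapsto -B$ while $V(\varepsilon,\cdot)$ and $U(\varepsilon,\cdot)$ satisfy $V(\varepsilon,-B)=V(-\varepsilon,B)$, $U(\varepsilon,-B)=U(-\varepsilon,B)$, which forces $\alpha_m(x,y,\zeta)$ to be odd in $\zeta$ when $m$ is odd (the argument the paper quotes from Ben Arous); integrating in $\zeta$ then kills those terms. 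The geometric hypothesis is used elsewhere: it is what makes geodesics linear (so that the Cameron--Martin minimizer is explicit, Proposition~\ref{th: ministance}), what identifies the rate function with $d^2(x,y)/(2T^{2H})$, and what underpins the non-degeneracy in Theorem~\ref{th: distance}. Confusing these two roles would leave your reader unable to verify the cancellation.

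\textbf{Integrability in $\zeta$.} You assert that since $|f_\varepsilon|\le 1$ and the Taylor remainders of the oscillatory factor only produce polynomial factors in $\zeta$, ``Gaussian moment estimates'' yield bounds good enough to integrate in $\zeta$. This is not true: a polynomial bound in $\|\zeta\|$ on $\alpha_m(x,y,\zeta)$ and $T_{N+1}(\varepsilon,x,y,\zeta)$ is not integrable, so the Fourier inversion cannot be justified this way. The decay in $\zeta$ comes from Malliavin integration by parts against the nondegenerate Gaussian factor $e^{i\zeta\cdot g_1^1(x,y)}$: the Malliavin matrix of $g_1^1$ is deterministic, nondegenerate, and locally uniform in $(x,y)$, and repeated integration by parts (Shigekawa's Propositions~5.7--5.8, as in Lemma~4.11 of the paper) gives arbitrary negative powers of $\|\zeta\|$ at the cost of more Malliavin derivatives of $\Upsilon$, which Lemma~\ref{th: Derivatives upsilon} controls uniformly in $(x,y)$. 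This step is essential and is missing from your argument as written. Once these two points are repaired, the rest of your outline agrees with the paper.
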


Once the Laplace approximation in the previous section is obtained, the proof of the above theorem is actually quite standard and follows closely the argument given, for instance, in Ben Arous\cite{benarous2}. Thus, for most of the lemmas in what follows, we only outline the proofs but stress the main differences with Brownian motion case.

\subsection{Preliminaries in differential geometry} The vector fields $V_1, V_2,..., V_d$ on $\mr^d$ determine a natural Riemannian metric $g=(g_{ij})$ on $\mr^d$ under which $V_1(x), V_2(x),..., V_d(x)$ form an orthonormal frame at each point $x\in\mr^d$. More explicitly, let $\sigma$ be the $d\times d$ matrix formed by
$$ \sigma(x)=(V_1(x), V_2(x),...,V_d(x)).$$
Denote by $\Gamma$ the inverse matrix of $\sigma\sigma^*$. Then the Riemannian metric $g$ is given by
$$g_{ij}=\Gamma_{ij},\quad\quad 1\leq i,j\leq d.$$ 
Throughout our discussion, we denote by $M$ the Riemannian manifold $\mr^d$ equipped with the metric $g$ specified above.
The Riemannian distance between any two points $x,y$ on $M$ is denoted by $d(x,y)$. 
We recall that
\[
d(x,y)=\inf_{\gamma \in \mathcal{C}(x,y)} \int_0^1 \sqrt{g_{\gamma(s)} (\gamma'(s),\gamma'(s)) }ds
\]
where $\gamma \in \mathcal{C}(x,y)$ denotes the set of absolutely continuous curves $\gamma:[0,1]\rightarrow \mathbb{R}^d$, such that $\gamma(0)=x, \gamma(1)=y$.

More analytically, this distance may also be defined as
\[
d(x,y)=\sup \{ f(x)-f(y), f\in C_b^{\infty}(\mathbb{R}^d), \sum_{i=1}^d (V_i f)^2 \le 1 \},
\]
where $C_b^{\infty}(\mathbb{R}^d)$ denotes the set of smooth and bounded functions on $\mathbb{R}^d$. Since the vector fields $V_1,\cdots, V_d$ are Lipschitz it is well-known that this distance is complete and that the Hopf-Rinow theorem holds (that is closed balls are compact). 

\

Due to the second assumption \ref{H}, the geodesics are easily described. If $k: \mathbb{R}_{\ge 0} \rightarrow \mathbb{R}$ is a $\alpha$-H\"older  path with $\alpha >1/2$ such that $k(0)=0$, we denote by $\Phi(x,k)$ the solution of the ordinary differential equation:
\[
x_t=x+\sum_{i=1}^d \int_0^t V_i(x_s) dk^i_s.
\]
Whenever there is no confusion, we always suppress the starting point $x$ and denote it simply by $\Phi(k)$ as before.
\begin{lemma}
$\Phi(x,k)$ is a geodesic if and only if $k(t)=t u$ for some $u \in \mathbb{R}^d$.
\end{lemma}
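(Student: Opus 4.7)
The plan is to compute the covariant acceleration $\nabla_{\dot x_t}\dot x_t$ of $x_t=\Phi(x,k)$ in the moving orthonormal frame $(V_1,\ldots,V_d)$ and to exploit the antisymmetry of the structure coefficients to reduce the geodesic equation to $\ddot k\equiv 0$.

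The first step is to verify the remark made just after the statement of Theorem \ref{main}: under Assumption \ref{H}, the Levi-Civita connection of $M$ satisfies $\nabla_{V_i}V_j=\frac{1}{2}[V_i,V_j]$. Since $\langle V_i,V_j\rangle_g=\delta_{ij}$ is constant, Koszul's formula collapses to
$$2\langle \nabla_{V_i}V_j,V_k\rangle_g=\langle [V_i,V_j],V_k\rangle_g-\langle [V_i,V_k],V_j\rangle_g-\langle [V_j,V_k],V_i\rangle_g=\omega_{ij}^k-\omega_{ik}^j-\omega_{jk}^i.$$
The antisymmetry $\omega_{ab}^c=-\omega_{ba}^c$ of the bracket combined with the hypothesis $\omega_{ij}^l=-\omega_{il}^j$ yields $\omega_{ik}^j=-\omega_{ij}^k$ and $\omega_{jk}^i=-\omega_{ji}^k=\omega_{ij}^k$, so the right-hand side reduces to $\omega_{ij}^k$, giving the asserted identity.

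Next, I would justify the smoothness needed to talk about a classical geodesic equation rather than a Young-type ODE. If $k(t)=tu$, then $\dot x_t=\sum_i u^iV_i(x_t)$ is a smooth ODE and $x$ is automatically $C^\infty$. Conversely, any geodesic $x$ is $C^\infty$, and because $V_1(x_t),\ldots,V_d(x_t)$ form a basis the matrix $\sigma(x_t)=(V_1(x_t)\,|\,\cdots\,|\,V_d(x_t))$ is pointwise invertible, so $k$ inherits the regularity of $x$ via $\dot k^i(t)=(\sigma(x_t)^{-1}\dot x_t)^i$. In either direction, classical derivatives are legitimate.

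Plugging $\dot x_t=\sum_i\dot k^i(t)V_i(x_t)$ into the connection and using the identity above yields
$$\nabla_{\dot x_t}\dot x_t=\sum_{i}\ddot k^i(t)V_i(x_t)+\frac{1}{2}\sum_{i,j}\dot k^i(t)\dot k^j(t)\,[V_i,V_j](x_t).$$
The double sum vanishes because $\dot k^i\dot k^j$ is symmetric in $(i,j)$ while $[V_i,V_j]$ is antisymmetric. Since $V_1(x_t),\ldots,V_d(x_t)$ form a basis of $\mathbb{R}^d$, the geodesic equation $\nabla_{\dot x}\dot x=0$ is equivalent to $\ddot k^i(t)=0$ for every $i$, and together with $k(0)=0$ this forces $k(t)=tu$ for some $u\in\mathbb{R}^d$. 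I do not anticipate a serious obstacle; the only delicate point is the regularity upgrade from Young integration to classical calculus, which is handled by the smoothness paragraph above.
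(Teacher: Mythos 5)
Your proof is correct and essentially matches the paper's. The paper's route via Christoffel symbols $\Gamma_{ij}^l=\tfrac12(\omega_{ij}^l+\omega_{li}^j+\omega_{lj}^i)=\tfrac12\omega_{ij}^l$ is just your Koszul computation written in a different notation, and both arguments kill the quadratic term by the antisymmetry of $\omega_{ij}^l$ in $(i,j)$; your explicit paragraph on upgrading $k$ from Hölder to smooth regularity fleshes out a point the paper passes over quickly but adds nothing substantively new.
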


\begin{proof}
It is well-known that geodesics $c$ are smooth and solutions of the equation
\[
\nabla_{c'} c' =0,
\]
where $\nabla$ is the Levi-Civita connection.
Therefore, in order $\Phi(k)$ to be a geodesic, we first see that $k$ needs to be smooth and then that
\[
\nabla_{\sum_{i=1}^d  V_i(x_s) \dot{k}^i_s}\sum_{i=1}^d  V_i(x_s) \dot{k}^i_s=0.
\]
Now, due to the structure equations
\[
[V_i,V_j]=\sum_{l=1}^d \omega_{ij}^l V_l,
\]
the Christoffel's symbols of the connection are given by
\[
\Gamma_{ij}^l=\frac{1}{2} \left(\omega_{ij}^l +\omega_{li}^j + \omega_{lj}^i
\right)=\frac{1}{2} \omega_{ij}^l.
\]
So the equation of geodesics may be rewritten
\[
\sum_{l=1}^d \frac{d^2 k^l_s}{ds^2} V_l(x_s)+\sum_{i,j,l=1}^d \omega_{ij}^l \dot{k}^i_s \dot{k}^j_s V_l(x_s)=0.
\]
Due to the skew-symmetry $\omega_{ij}^l=-\omega_{ji}^l$ we get
\[
\frac{d^2 k^l_s}{ds^2} =0,
\]
which leads to the expected result.
\end{proof}

As a consequence of the previous  lemma, we then have the following key result:

\begin{proposition}\label{th: ministance}
Let $T>0$. For $x,y \in \mathbb{R}^d$,
\[
\inf_{k \in \mathcal{H}_H, \Phi_T(x,k)=y } \| k \|^2_{\msh_H}=\frac{d^2(x,y)}{T^{2H}} . 
\]
\end{proposition}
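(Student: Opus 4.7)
The plan is to establish a matching lower and upper bound on the infimum.

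\textbf{Lower bound via duality.} I would use the analytic characterization $d(x,y)=\sup\{f(y)-f(x):\sum_i(V_if)^2\le 1,\ f\in C_b^\infty(\mr^d)\}$ recalled earlier in the paper. For any such $f$ and any $k\in\msh_H$ with $\Phi_T(x,k)=y$, set $\gamma=\Phi(x,k)$ and $g(t)=((V_if)(\gamma_t))_i$, so that $|g(t)|\le 1$ for all $t$. The Young chain rule gives $f(y)-f(x)=\int_0^T g(t)\cdot dk(t)$. Writing $k=K_H h$ with $h\in\msh$ and using the isometry $K_H^*:\cH\to L^2([0,T])$ from the preliminaries together with Fubini, this integral rewrites as $\int_0^T (K_H^*g)(s)\,\dot h(s)\,ds$, to which Cauchy--Schwarz in $L^2$ applies, giving
$$|f(y)-f(x)|\le\|g\|_{\cH}\|k\|_{\msh_H}.$$
For $H>1/2$ one has the explicit representation $\|g\|_{\cH}^2=H(2H-1)\int_0^T\int_0^T g(s)\cdot g(t)|s-t|^{2H-2}\,ds\,dt$, which under the pointwise bound $|g|\le 1$ evaluates to at most $T^{2H}$. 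Taking the supremum over admissible $f$ then yields $d(x,y)^2\le T^{2H}\|k\|_{\msh_H}^2$, which is the lower bound.

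\textbf{Matching upper bound.} By Hopf--Rinow there exists a unit-speed minimizing geodesic $\bar\gamma:[0,d(x,y)]\to M$ joining $x$ to $y$. By the preceding lemma, $\dot{\bar\gamma}(s)=\sum_i \bar u_0^i V_i(\bar\gamma(s))$ for some constant unit vector $\bar u_0\in\mr^d$. Introduce the reparametrization $\alpha(t)=R_H(t,T)/T^{2H}$ on $[0,T]$, which is smooth, satisfies $\alpha(0)=0$, $\alpha(T)=1$, and is strictly monotone because $\alpha'(t)=H[t^{2H-1}+(T-t)^{2H-1}]/T^{2H}>0$ for $H>1/2$. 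Set $k^*(t)=d(x,y)\bar u_0\,\alpha(t)$ and $\gamma^*(t)=\bar\gamma(d(x,y)\alpha(t))$. A direct differentiation gives $\dot\gamma^*(t)=V(\gamma^*(t))\dot k^*(t)$, so $\gamma^*=\Phi(x,k^*)$ and $\gamma^*(T)=y$. The reproducing kernel identity $\langle f,R_H(\cdot,t)\rangle_{\msh_H}=f(t)$ gives $\|R_H(\cdot,T)\|_{\msh_H}^2=R_H(T,T)=T^{2H}$, hence $\|\alpha\|_{\msh_H}^2=T^{-2H}$, and consequently
$$\|k^*\|_{\msh_H}^2=d(x,y)^2\|\alpha\|_{\msh_H}^2=\frac{d(x,y)^2}{T^{2H}},$$
matching the lower bound.

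\textbf{Expected main obstacle.} The delicate step is the pairing inequality $|\int g\,dk|\le\|g\|_{\cH}\|k\|_{\msh_H}$ together with the explicit $H>1/2$ formula for $\|\cdot\|_{\cH}$. This requires one to carefully identify the deterministic Young integral with the natural duality pairing between $\cH$ (defined via $\langle\mathbf 1_{[0,t]},\mathbf 1_{[0,s]}\rangle=R_H(t,s)$) and $\msh_H=K_H(\msh)$, and to justify the Fubini manipulation through the operator $K_H^*$ for sufficiently regular $g$ (then extend by density). By contrast, the upper bound is a clean consequence of Hopf--Rinow, the geodesic classification from the preceding lemma, and the reproducing kernel property.
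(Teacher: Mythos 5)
Your proof is correct and follows essentially the same two-sided strategy as the paper: the lower bound via the analytic characterization of $d(x,y)$, the $K_H^*$ isometry between $\mathcal{H}$ and $L^2$, and Cauchy--Schwarz; the upper bound by taking $k^*$ proportional to $R_H(\cdot,T)$ so that $\|k^*\|_{\msh_H}$ is computed by the reproducing kernel identity $\|R_H(\cdot,T)\|^2_{\msh_H}=R_H(T,T)=T^{2H}$. The two small refinements you add — verifying $\|g\|^2_{\mathcal{H}}\le T^{2H}$ explicitly via the $H>1/2$ kernel $H(2H-1)|s-t|^{2H-2}$ (which the paper asserts in one step) and handling all $y$ uniformly via Hopf--Rinow rather than a local argument plus chaining — do not change the approach but do make the exposition slightly tighter.
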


\begin{proof}

In a first step we prove 
\[
\frac{d^2(x,y)}{T^{2H}}  \le \inf_{k \in \mathcal{H}_H, \Phi_T(x,k)=y } \| k \|^2_{\msh_H}. 
\]
Let $k \in \msh_H$ such that $\Phi_0(k)=x, \Phi_T(k)=y$. Denote by $z$ the solution of the equation
\[
dz_t=\sum_{i=1}^d V_i(z_t) dk^i_t, \quad 0 \le t \le T.
\]
We have therefore:
\[
z_0=x, \quad z_T=y.
\]
Let now $f  \in C_b^{\infty}(\mathbb{R}^d)$ such that $ \sum_{i=1}^d (V_i f)^2 \le 1$ . By the change of variable formula, we get
\[
f(y)-f(x) =\sum_{i=1}^d \int_0^T V_if (z_t) dk^i_t.
\]
Since $k \in \msh_H$, we can find $h$ in the Cameron-Martin space of the Brownian motion such that
\[
k_t=\int_{0}^t K_H (t,s) \dot{h}_s ds.
\]
Integrating by parts, we have then
\[
\int_0^T V_if (z_t) dk^i_t=\int_0^T \left( \int_s^T \frac{\partial K_H}{\partial t}(t,s) V_if(z_t) dt \right) \dot{h}^i_s ds.
\]
Therefore from Cauchy-Schwarz inequality, the isometry between $\msh$ and $\msh_H$ and the fact that $ \sum_{i=1}^d (V_i f)^2 \le 1$, we deduce that
\[
(f(y)-f(x))^2 \le R(T,T) \| \dot{h} \|^2_{L^2([0,1])}= T^{2H}  \| k \|^2_{\msh_H}.
\]
Thus 
\[
\frac{d^2(x,y)}{T^{2H}} \le \inf_{k \in \msh_H, \Phi_T(x,k)=y } \| k \|^2_{\msh_H}.
\]

\

We now prove the converse inequality.

We first assume that $y$ is close enough to $x$ so that there exist $(y_1,\cdots,y_d) \in \mathbb{R}^d$ that satisfy
\[
y=\exp \left(\sum_{i=1}^d y_i V_i \right) (x).
\]
Let
\[
k^i_t=\frac{\int_0^t K_H(t,s)K_H(T,s)ds}{T^{2H} } y_i=\frac{R(t,T)}{T^{2H} } y_i.
\]
In that case, it is easily seen that
\[
\Phi(k)(t)=\exp \left( \sum_{i=1}^d  \frac{R(t,T)}{T^{2H} } y_i V_i \right) (x).
\]
In particular,
\[
\Phi_0(k)=x, \Phi_1(k)=y.
\]
Moreover,
\[
\| k \|^2_{\msh_H}=\frac{\sum_{i=1}^d y_i^2}{T^{2H}}=\frac{d^2(x,y)}{T^{2H}}.
\]
As a consequence
\[
\inf_{k \in \msh_H, \Phi_T(x,k)=y } \| k \|^2_{\msh_H}\le \frac{d^2(x,y)}{T^{2H}}.
\]
If $y$ is not close to $x$, we just have to pick a sequence $x_0=x,\cdots,x_m=y$ such that
\[
d(x_i,x_{i+1}) \le \varepsilon 
\]
and 
\[
d(x,y)=\sum_{i=0}^{m-1}  d(x_i,x_{i+1}),
\]
where $\varepsilon$ is small enough. 
\end{proof}

The second keypoint is the following

\begin{theorem}\label{th: distance}
Fix $x_0\in M$. Let $F$ be a $C^\infty$ function on $M$. There exists a neighborhood $V$ of $x_0$ such that if $y_0\in V$ is a non-degenerate minimum of
$$F(y)+\frac{d^2(x_0,y)}{2},$$
then there exists a unique $k_0\in\msh_H$ such that
(a): $\Phi_1(x_0,k_0)=y_0$;
(b): $d(x_0,y_0)=\|k_0\|_{\msh_H}$; and (c): $k_0$ is a non-degenerate minimum of the functional: $k\rightarrow F(\Phi_1(x_0,k))+1/2\|k\|^2_{\msh_H}$ on $\msh_H$.
\end{theorem}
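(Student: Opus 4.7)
The plan is to build $k_0$ explicitly via the Riemannian exponential map so that (a) and (b) fall out of Proposition \ref{th: ministance}, and then to establish (c) by a splitting of $\msh_H$ together with the Hilbert-Schmidt structure furnished by Lemma \ref{H2}.

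First I would shrink $V$ to lie inside the injectivity radius of $\exp_{x_0}$, so that every $y_0\in V$ admits a unique representation $y_0=\exp(\sum_{i=1}^d y_iV_i)(x_0)$. Setting $k_0^i(t)=R(t,1)y_i$, the computation in the proof of Proposition \ref{th: ministance} immediately gives $\Phi_1(x_0,k_0)=y_0$ and $\|k_0\|_{\msh_H}^2=\sum y_i^2=d^2(x_0,y_0)$, yielding (a) and (b). For uniqueness, any $k\in\msh_H$ with these properties saturates the infimum of Proposition \ref{th: ministance}, so $t\mapsto\Phi_t(x_0,k)$ is a length-minimizing geodesic from $x_0$ to $y_0$; uniqueness of such a geodesic within the injectivity radius, together with the linear independence of $V_1,\ldots,V_d$, forces $k=k_0$.

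For (c) I set $G(k)=F(\Phi_1(x_0,k))+\frac{1}{2}\|k\|_{\msh_H}^2$. The identity $dG(k_0)=0$ comes from Lagrange multipliers applied to the constrained minimum of $\|k\|^2/2$ on $\Phi_1^{-1}(y_0)$: a multiplier $\mu\in (T_{y_0}M)^*$ appears, and the Euler-Lagrange equation for $y_0$ as critical point of $F(\cdot)+\frac{1}{2}d^2(x_0,\cdot)$ identifies $\mu=-dF(y_0)$. Splitting $\msh_H=K\oplus W$ with $K=\ker d\Phi_1(k_0)$ and $W$ a $d$-dimensional complement, I use the implicit function theorem to produce a smooth section $s$ of $\Phi_1$ with $s(y_0)=k_0$; by Step~1 the section $s$ assigns to each $y\in V$ its own geodesic-minimizer, so $G\circ s(y)=F(y)+\frac{1}{2}d^2(x_0,y)$, and the non-degeneracy hypothesis on $y_0$ transfers directly to positive definiteness of $d^2G(k_0)\big|_W$. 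On $K$ one has
\begin{equation*}
d^2 G(k_0)(k,k)=\|k\|_{\msh_H}^2+dF(y_0)\bigl(d^2\Phi_1(k_0)(k,k)\bigr),
\end{equation*}
which is non-negative by the second-order Lagrange condition at the constrained minimum $k_0$.

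The main obstacle is upgrading this non-negativity on $K$ to strict positivity. By Lemma \ref{H2} the Hessian decomposes as $d^2 G(k_0)=I+A$ with $A$ Hilbert-Schmidt, so the Fredholm alternative reduces the task to excluding $-1$ from the spectrum of $A$. I would handle this by shrinking $V$ further so that $k_0$ is small in $\msh_H$, then run a continuity/perturbation argument from the trivial case $y_0=x_0$, where $k_0=0$ and $d^2G(0)=I$ is manifestly positive. Combining the resulting strict positivity on $K$ with that on $W$ then completes the proof of (c).
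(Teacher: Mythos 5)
Your treatment of parts (a) and (b) via the exponential map and the explicit formula $k_0^i(t)=R(t,1)y_i$ is the same construction the paper uses inside Proposition~\ref{th: ministance}, and your Lagrange-multiplier derivation of $dG(k_0)=0$ together with the Hilbert--Schmidt decomposition from Lemma~\ref{H2} and a continuity argument from $y_0=x_0$ is very close in spirit to the paper's proof of (c). However, there are two genuine gaps.

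The fatal one is the closing step ``Combining the resulting strict positivity on $K$ with that on $W$ then completes the proof of (c).'' Positive definiteness of a quadratic form restricted to each summand of a direct sum $\msh_H=K\oplus W$ does \emph{not} imply positive definiteness on the whole space, because the cross terms $d^2G(k_0)(k_K,k_W)$ (which here contain both the term $\langle k_K,k_W\rangle_{\msh_H}$ and $dF(y_0)(d^2\Phi_1(k_0)(k_K,k_W))$, neither of which vanishes in general) can make the form negative on non-diagonal vectors. The paper sidesteps this entirely: setting $u(t)=G(k_0+tk)$ and $v(t)=\bigl(F+\tfrac12 d^2(x_0,\cdot)\bigr)(\Phi_1(x_0,k_0+tk))$, Proposition~\ref{th: ministance} gives $u\ge v$ with $u(0)=v(0)$ and $u'(0)=v'(0)=0$, so
\begin{equation*}
d^2G(k_0)(k,k)=u''(0)\ge v''(0)=d^2\bigl(F+\tfrac12 d^2(x_0,\cdot)\bigr)(y_0)\bigl(d\Phi_1(k_0)k\bigr)^2>0
\end{equation*}
for \emph{every} $k\notin K$, not merely $k\in W$. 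This plus positivity on $K$ (which is what the continuity argument buys) covers all of $\msh_H\setminus\{0\}$ with no cross-term issue. If you want to salvage your outline, you should replace the $W$-restriction by this ``all of $\msh_H\setminus K$'' statement, which is exactly what the comparison with $v$ delivers.

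A second, smaller, error: at $y_0=x_0$ one has $k_0=0$ and $dF(x_0)=0$, but $d^2G(0)(k,k)=d^2F(x_0)\bigl(d\Phi_1(0)k,d\Phi_1(0)k\bigr)+\|k\|^2_{\msh_H}$, which is \emph{not} the identity unless $d^2F(x_0)=0$. It is still strictly positive — on $K$ the first term vanishes, and off $K$ the comparison $u''\ge v''>0$ above applies — and this positivity is what feeds the continuity argument. Also note that the implicit-function-theorem section of $\Phi_1$ built from an arbitrary complement $W$ is not the geodesic-minimizer assignment; either construct the geodesic section explicitly (as the paper does in Proposition~\ref{th: ministance}) or observe that for \emph{any} smooth section $s$ one has $G\circ s\ge F+\tfrac12 d^2(x_0,\cdot)$ with equality at $y_0$, which already yields $d^2(G\circ s)(y_0)\ge d^2\bigl(F+\tfrac12 d^2\bigr)(y_0)>0$.
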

\begin{proof}
The first two statements are clear from Proposition \ref{th: ministance}. We only need to prove (c). To simplify notation, let
$$G(k)=F(\Phi_1(x_0,k))+\frac{1}{2}\|k\|^2_{\msh_H}.$$
Consider
$$u(t)=G(k_0+tk),$$
and
$$v(t)=F(\Phi_1(x_0,k_0+tk))+\frac{1}{2}d^2(x_0, \Phi_1(x_0,k_0+tk)).$$
It is clear that
$$u(t)\geq v(t), \quad u(0)=v(0)\quad\mathrm{and}\quad u'(0)=v'(0)=0.$$
Thus
$$d^2 G(k_0) k^2=u''(0)\geq v''(0)=\left(F+\frac{1}{2}d(x_0,\cdot)^2\right)''(y_0)\left(d\Phi_1(k_0) k\right)^2.$$
When $k\notin \mathrm{Ker}(d\Phi_1(x_0,k_0))$, we surely have 
$$d^2G(k_0)k^2>0.$$
In the case $k\in \mathrm{Ker}(d\Phi_1(x_0,k_0))$, we have 
\begin{align}\label{y0=x0}d^2G(k_0)k^2>0, \quad\quad\mathrm{when}\ y_0=x_0.\end{align}
To see this, first note that since $k\in \mathrm{Ker}(d\Phi_1(k_0,x_0))$ we can chose a family of path $\{z^t\in C([0,1];\mr^d); t\in[0,1]\}$ such that $z^t_0=z^t_1=z^0_s=0$ for all $(t,s)\in [0,1]\times[0,1]$, and
$$\frac{dz^t}{dt}\bigg|_{t=0}=d\Phi(x_0,k_0)k.$$
Moreover, we have $z^t=\Phi(0,k^t)$ for a family of path $k^t\in \msh_H$. Therefore
\begin{align*}
d^2G(k_0)k^2=\frac{d^2}{dt^2}\bigg|_{t=0}\left(F(x_0+z^t_1)+\frac{1}{2}\|k^t\|^2_{\msh_H}\right)
=\int_0^1\dot{\left[K^{-1}_H\left(\frac{d}{dt}\bigg|_{t=0}k^t\right)\right]}^2_sds.
\end{align*}
This shows that if $d^2G(k_0)k^2=0$ then $k=0$, which proves (\ref{y0=x0}). Now the lemma follows by a continuity argument. 
\end{proof}

\begin{remark}\label{remark V}
In the above lemma, it is clear that we can choose the neighborhood $V$ of $x_0$ such that for any $x\in V$, if $y\in V$ is a non-degenerate minimum of $F(y)+d(x,y)^2/2$, then the three properties in the lemma are fulfilled.
\end{remark}

\subsection{Asymptotics of the density function}
Consider 
$$dX^\varepsilon_t=\varepsilon\sum_{i=1}^{d}V_i(X^\varepsilon_t)dB^i_t\quad\quad\mathrm{with}\ X^\varepsilon_0=x.$$
Before applying the Laplace approximation to $X^\varepsilon_t$, we need the following lemma which gives us the correct functionals $F$ and $f$.
\begin{lemma}
Let $V$ be in Remark \ref{remark V}. There exists a bounded smooth function $F(x,y,z)$ on $V\times V\times M$ such that:

(1) For any $(x,y)\in V\times V$ the infimum 
$$\inf\left\{F(x,y,z)+\frac{d(x,z)^2}{2}, z\in M\right\}=0$$
is attained at the unique point $y$. More over, it is a non-degenerate minimum.

(2) For each $(x,y)\in V\times V$, there exists a ball centered at $y$ with radius $r$ independent of $x,y$ such that $F(x,y,\cdot)$ is a constant outside of the ball.
\end{lemma}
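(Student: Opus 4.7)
I plan to define $F$ explicitly as a smooth cutoff of the auxiliary expression $-\tfrac{1}{2}d(x,z)^2+\tfrac{1}{2}|z-y|^2$. This expression is engineered so that the singular penalty $d(x,z)^2/2$ is cancelled near $z=y$ and replaced by the positive definite quadratic $|z-y|^2/2$, while far from $y$ the full penalty $d(x,z)^2/2$ survives unmodified, up to a constant factor.

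Fix $r_0>0$ small enough that the injectivity radius of the Riemannian structure exceeds $2r_0$ at every point within distance $2r_0$ of $x_0$; on this region $(x,z)\mapsto d(x,z)^2$ is $C^\infty$. Shrink $V$ from Remark~\ref{remark V} so that $V\subset B(x_0,r_0/4)$ (in both Euclidean and Riemannian senses, which are locally equivalent), and pick $r\in(0,r_0/2)$. Let $\chi:\mr^d\to[0,1]$ be smooth with $\chi\equiv 1$ on $\{|w|\le r/2\}$ and $\chi\equiv 0$ on $\{|w|\ge r\}$, and set
$$
F(x,y,z)=\chi(z-y)\left[-\frac{d(x,z)^2}{2}+\frac{|z-y|^2}{2}\right].
$$
Condition (2) is immediate, since $F(x,y,\cdot)\equiv 0$ on $\{|z-y|\ge r\}$. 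The factor $\chi(z-y)$ confines the nontrivial part of $F$ to a region where $x,z$ both stay within $2r_0$ of $x_0$ (by the triangle inequality), hence $d(x,z)^2$ is $C^\infty$ there; together with the boundedness of $|z-y|$ on the support of $\chi$ this yields global smoothness and boundedness of $F$.

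For condition (1), a direct computation gives
$$
F(x,y,z)+\frac{d(x,z)^2}{2}=\chi(z-y)\,\frac{|z-y|^2}{2}+\bigl(1-\chi(z-y)\bigr)\,\frac{d(x,z)^2}{2}\ge 0,
$$
a convex combination of two nonnegative terms. Both terms vanish simultaneously iff $z=y$: at $z=y$ the first vanishes because $|z-y|=0$ and the second because $\chi(0)=1$. Conversely, if the sum vanishes at some $z\ne y$, then $\chi(z-y)=0$ (so $|z-y|\ge r$) and $d(x,z)=0$ (so $z=x$), forcing $|z-y|=|x-y|<r_0/4<r$, a contradiction. Hence $y$ is the unique minimizer and the infimum equals $0$. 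Non-degeneracy is immediate: on $\{|z-y|<r/2\}$ the sum reduces to $|z-y|^2/2$, whose Hessian in $z$ at $z=y$ is the identity matrix, which is positive definite. The only delicate point — calibrating $V$ and $r$ so that $d(x,z)^2$ remains smooth over the support of $\chi(z-y)$ — is handled by the positivity and local continuity of the injectivity radius near $x_0$, so no genuine obstacle arises.
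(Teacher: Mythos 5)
Your explicit construction is essentially the standard one that the paper delegates by citing Ben Arous (Lemma~3.8 of \cite{benarous2}); the cutoff $\chi(z-y)\bigl[-\tfrac12 d(x,z)^2+\tfrac12|z-y|^2\bigr]$ is precisely the right idea, the decomposition $F+\tfrac12 d^2 = \chi\tfrac{|z-y|^2}{2}+(1-\chi)\tfrac{d(x,z)^2}{2}$ is correct, and the non-degeneracy observation (near $z=y$ the functional reduces to $\tfrac12|z-y|^2$) is exactly what is needed.

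However, the calibration of the constants in the uniqueness argument does not quite close. Two small slips: (i) from $V\subset B(x_0,r_0/4)$ the triangle inequality gives $|x-y|<r_0/2$, not $|x-y|<r_0/4$; (ii) choosing $r\in(0,r_0/2)$ does not imply $r>r_0/4$, so the chain ``$|x-y|<r_0/4<r$'' can fail. Moreover, from $\chi(z-y)=0$ you may only conclude $|z-y|>r/2$ (where $\chi\equiv 1$ fails), not $|z-y|\ge r$. None of this is fatal: the fix is simply to choose the radius $r$ first (small enough that $d(\cdot,\cdot)^2$ is smooth on the relevant region) and then shrink $V$ so that its Euclidean diameter is strictly less than $r/2$; with that ordering, $z=x$ and $\chi(x-y)=0$ force $|x-y|>r/2\ge\operatorname{diam}(V)$, the desired contradiction. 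With this adjustment the argument is complete and provides the explicit construction that the paper itself only references.
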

\begin{proof} 
See Lemma 3.8 in Ben Arous\cite{benarous2}.
\end{proof}

Let $F$ be in the above lemma and $p_\varepsilon(x,y)$ the density function of $X^\varepsilon_1$.  By the inversion of Fourier transformation we have
\begin{align*}
p_\varepsilon(x,y)e^{-\frac{F(x,y,y)}{\varepsilon^2}}&=\frac{1}{(2\pi)^d}\int e^{-i\zeta\cdot y}d\zeta\int e^{i\zeta\cdot z}e^{-\frac{F(x,y,z)}{\varepsilon^2}}p_\varepsilon (x,z)dz\\
&=\frac{1}{(2\pi\varepsilon)^d}\int e^{-i\frac{\zeta\cdot y}{\varepsilon}}d\zeta\int e^{i\frac{\zeta\cdot z}{\varepsilon}}e^{-\frac{F(x,y,z)}{\varepsilon^2}}p_\varepsilon(x,z)dz\\
&=\frac{1}{(2\pi\varepsilon)^d}\int d\zeta \me_x\left(e^{\frac{i\zeta\cdot(X_1^\varepsilon-y)}{\varepsilon}}e^{\frac{F(x,y,X^\varepsilon_1)}{\varepsilon^2}}\right).
\end{align*}
It is clear that by  applying Laplace approximation to the expectation in the last equation above and  
switching the order of integration (with respect to $\zeta$) and summation, we obtain an asymptotic expansion for the the density function $p_\varepsilon(x,y)$.  On the other hand, we cannot apply the Laplace method here directly since we need a uniform control in $x$ and $y$. Also we need to show that the use of Fourier inversion is legitimate.

To make the above prior computation rigorous, we modify the Laplace method in the previous section as follows. 

First note that by Lemma \ref{th: distance}, Assumption \ref{assumption Laplace} is satisfied.
Consider 
$$dZ_t^\varepsilon(x,y)=\sum_{i=1}^d V_i\big(Z^\varepsilon_t(x,y)\big)\big(\varepsilon dB^i_t+d\gamma_t^i(x,y)\big),\quad\quad\mathrm{with}\ Z_0^\varepsilon(x,y)=x.$$
In the above $(x,y)\in V\times V$ and $\gamma(x,y)$ is the unique path in $\msh_H$ such that $\Phi_1(x,\gamma(x,y))=y$ and $\|\gamma(x,y)\|_{\msh_H}=d(x,y).$ 
\begin{lemma}\label{th: Derivatives Z}
Let $Z^\varepsilon_t(x,y)$ be the process defined above, then $Z^\varepsilon_t(x,y)$ is $C^\infty$ in $(\varepsilon, x,y)$. Moreover, there exists an $\varepsilon_0>0$ such that
$$\sup_{\varepsilon\leq\varepsilon_0}\sup_{x,y\in V\times V}\sum_{j=0}^n \me\left(\sup_{t\in[0,1]}\|D^j(\partial_x^\alpha\partial_y^\beta\partial_\varepsilon^m Z_t^\varepsilon(x,y))\|_{\mathrm{HS}}^p\right)<\infty.$$
Here $m,n$ are non-negative integers, $p\geq 2$  and $\alpha\in\{1,2,...,d\}^k, \beta\in\{1,2,...,d\}^l$ are multiple indices. 
\end{lemma}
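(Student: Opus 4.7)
The plan is to differentiate the defining Young SDE repeatedly in its parameters and in the Brownian sample path, note that each derivative satisfies a linear Young SDE of the same structural type, and then apply the Hu--Nualart a priori estimate (Lemma \ref{lem:apriori}) together with the Young integral bound of Remark \ref{remark 1} to obtain uniform $L^p$ moments.

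First I would check the smooth dependence of the drift path $\gamma(x,y) \in \msh_H$ on $(x,y) \in V \times V$. In the construction used in the proof of Proposition \ref{th: ministance}, $\gamma$ is built from the logarithmic coordinates $(y_1(x,y),\dots,y_d(x,y))$ of the exponential map $\exp_x(\sum_i z_i V_i)$, via $\gamma^i_t(x,y) = R(t,1) y_i(x,y)/R(1,1)$. Under Assumption \ref{H} the exponential map is a local diffeomorphism, so by shrinking $V$ if necessary the implicit function theorem provides $y_i(x,y) \in C^\infty(V \times V)$ with all derivatives bounded. Hence each $\gamma^i_\cdot(x,y)$ is $C^\infty$ in $(x,y)$ with uniformly bounded $C^\lambda$-norms on $V \times V$.

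Next, formal differentiation of $dZ^\varepsilon_t = \sum_i V_i(Z^\varepsilon_t)(\varepsilon dB^i_t + d\gamma^i_t)$ with respect to $\varepsilon$, $x$, $y$ yields, for $W := \partial_\varepsilon^m \partial_x^\alpha \partial_y^\beta Z^\varepsilon_t$, a linear Young equation of the form
\begin{equation*}
dW_t = \sum_{i=1}^d \partial V_i(Z^\varepsilon_t) W_t\,(\varepsilon dB^i_t + d\gamma^i_t) + dE^{\alpha,\beta,m}_t,
\end{equation*}
where $E^{\alpha,\beta,m}$ is a polynomial expression in lower-order parameter derivatives of $Z^\varepsilon$, integrated against $dB$ and $d\gamma$, with coefficients involving higher derivatives of the $V_i$'s and of $\gamma(x,y)$. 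Since the $V_i$'s have bounded derivatives of all orders and $\gamma(x,y)$ is $C^\infty$ with uniformly bounded derivatives on $V \times V$, an induction on the total order $|\alpha|+|\beta|+m$ combined with Lemma \ref{lem:apriori} and Remark \ref{remark 1} gives $\sup_{\varepsilon \leq \varepsilon_0}\sup_{(x,y)\in V\times V}\me(\sup_{t\in[0,1]}|W_t|^p) < \infty$ for every $p \geq 2$. The initial conditions $W_0$ are either $0$ or $\partial_x^\alpha x$, so they contribute no difficulty.

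For the Malliavin derivatives, Malliavin-differentiating the linear Young SDE for $W$ produces, for each fixed time parameter, another linear Young SDE of the same structure, driven by $B$ and $\gamma$, with Hilbert--Schmidt-valued coefficients built from derivatives of $V_i$'s, $\gamma$, and lower-order Malliavin derivatives of lower-order parameter derivatives of $Z^\varepsilon$. Iterating $j$ times and applying Lemma \ref{lem:apriori} one more time gives the required bound on $\me(\sup_{t\in[0,1]}\|D^j W_t\|_{\mathrm{HS}}^p)$ uniformly on $V \times V \times [0,\varepsilon_0]$. The main obstacle I anticipate is a bookkeeping one: carefully identifying the driving term $E^{\alpha,\beta,m}$ and its Malliavin iterates so that their $C^\lambda$-norms admit moment bounds depending only on lower-order derivatives already controlled by the induction hypothesis; once the structural form of the linear Young SDE is established the estimates reduce to Lemma \ref{lem:apriori} plus Gronwall, exactly as in the classical Brownian setting.
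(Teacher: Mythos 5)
Your proposal follows exactly the paper's (terse) route: the paper simply asserts that the first statement is clear and that the moment bound is a consequence of Lemma \ref{lem:apriori}, which is precisely the reduction you spell out — differentiating the Young SDE to get linear equations for the parameter and Malliavin derivatives and then invoking Hu--Nualart inductively. Your added care about the smooth dependence of $\gamma(x,y)$ on $(x,y)$ and the structure of the driving terms $E^{\alpha,\beta,m}$ is a welcome expansion of details the paper leaves implicit, but the argument is the same.
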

\begin{proof}
The first statement is clear. The second statement is a consequence of Lemma \ref{lem:apriori}
\end{proof}

Now consider the stochastic Taylor expansion for $Z^\varepsilon$
\begin{align}\label{Talyor Z}
Z_t^\varepsilon=\phi_t(x,y)+\sum_{j=1}^N\frac{g_t^k(x,y)\varepsilon^k}{k!}+R_t^{N+1}(\varepsilon,x,y)\varepsilon^{N+1}.
\end{align}
Here
$$\phi(x,y)=\Phi(x,\gamma(x,y)),$$
and
$$R_t^{N+1}(\varepsilon,x,y)=\int_0^1\partial_\varepsilon^{N+1}Z_t^\varepsilon(x,y)\frac{(1-v)^N}{N!}dv.$$

Let $$\theta(\varepsilon, x,y)=F(x,y,Z^\varepsilon_1(x,y)).$$
We have
$$\theta(\varepsilon,x,y)=\theta(0,x,y)+\varepsilon\partial_\varepsilon \theta(0,x,y)+\varepsilon^2U(\varepsilon,x,y).$$
where
$$U(\varepsilon,x,y)=\int_0^1\partial_\varepsilon^2\theta(\varepsilon,x,y)(1-v)dv.$$
By our choice of $Z^\varepsilon$, it is clear
\begin{align}\label{theta 0}
\theta(0,x,y)=F(x,y,\phi_1(x,y))=F(x,y,y).
\end{align}
Lemma \ref{th: theta 1} gives us
\begin{align}\label{theta 1}
\partial_\varepsilon\theta(0,x,y)=-\int_0^1 (K^*_H)^{-1}\dot{(K_H^{-1}\gamma(x,y))}_sdB_s.
\end{align}

Thus applying Cameron-Martin theorem for fBm (Theorem \ref{C-M}), we have
\begin{align*}&\me_x\exp\left( \frac{i\zeta\cdot(X_1^\varepsilon-y)}{\varepsilon}-\frac{F(x,y,X_1^\varepsilon)}{\varepsilon^2} \right)\\=&\me\left[\exp\left(\frac{i\zeta\cdot(Z_1^\varepsilon-y)}{\varepsilon}-\frac{F(x,y,Z_1^\varepsilon)}{\varepsilon^2} \right)\exp\left(-\frac{1}{\varepsilon}\int_0^1\big((K^*_H)^{-1}(\dot{K_H^{-1}{\gamma}})\big)_sdB_s-\frac{\|\gamma\|^2_{\msh_H}}{2\varepsilon^2}\right)\right]\\
=&\exp\left[-\frac{a}{\varepsilon^2}\right]\me_x\left[\exp\bigg(i\zeta\cdot g^1_1(x,y)\bigg)\exp\bigg(i\zeta\cdot V(\varepsilon,x,y)-U(\varepsilon,x,y)\bigg)\right].\end{align*}
In the above
$$a(x,y)=F(x,y,y)+\frac{d^2(x,y)}{2}=0,$$
and $$V(\varepsilon,x,y)=\frac{Z^\varepsilon_1(x,y)-y-\varepsilon g^1_1(x,y)}{\varepsilon}=\varepsilon R^2_1(\varepsilon,x,y).$$

Similar to the argument in Section 2, we need to estimate
$$\me_x\left[\exp\bigg(i\zeta\cdot g^1_1(x,y)\bigg)\exp\bigg(i\zeta\cdot V(\varepsilon,x,y)-U(\varepsilon,x,y)\bigg)\right].$$ 
For this purpose, we need 

\begin{lemma}\label{th: U(x,y)}
There exist $C>0$ and $\varepsilon_0>0$ such that
$$\sup_{(x,y)\in V\times V}\sup_{\varepsilon<\varepsilon_0}\me e^{-(1+C)U(\varepsilon,x,y)} <\infty.$$
\end{lemma}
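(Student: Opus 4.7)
My plan is to upgrade the pointwise bound of Lemma \ref{th: key lemma2} to one that is uniform in $(x,y)\in V\times V$, after shrinking $V$ if necessary so that its closure is compact. First I would Taylor-expand $\theta(\varepsilon,x,y)=F(x,y,Z^\varepsilon_1(x,y))$ in $\varepsilon$ via the chain rule to identify
$$U(0,x,y)=\tfrac12 d^2F(x,y,\phi_1(x,y))\bigl(g^1_1(x,y),g^1_1(x,y)\bigr)+\tfrac12 dF(x,y,\phi_1(x,y))\,g^2_1(x,y),$$
where the differentials are taken in the last variable. Lemma \ref{th: Derivatives Z} then guarantees that the remainder $U(\varepsilon,x,y)-U(0,x,y)$ is $O(\varepsilon)$ in every $L^p$, uniformly in $(x,y)$.

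Next I would attach to each $(x,y)$ the bounded self-adjoint operator $A(x,y)$ on $\msh$ defined as in Lemma \ref{H2}, built from the Hessian of $k\mapsto F(x,y,\Phi_1(x,k))$ at $\gamma(x,y)$. The map $(x,y)\mapsto \gamma(x,y)\in\msh_H$ is continuous by applying the implicit function theorem to the non-degenerate minimum characterization in Theorem \ref{th: distance}; the integral kernels defining $A(x,y)$ via (\ref{chi2}) and (\ref{V}) then depend continuously on $(x,y)$, and hence so does $A(x,y)$ in Hilbert--Schmidt norm. Since Theorem \ref{th: distance} ensures each $A(x,y)$ has smallest eigenvalue strictly greater than $-1$, compactness of $\bar V\times\bar V$ yields a uniform spectral bound $A(x,y)\ge -(1-2\delta)I$ for some $\delta>0$.

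Finally I would run the quadratic-form / Cameron--Martin argument of Ben Arous \cite{Ben Arous}: after passing through the isometry $K_H\colon\msh\to\msh_H$, the exponential $e^{-(1+C)U(0,x,y)}$ reduces to a Wiener-chaos expression whose Gaussian integrability is controlled by the Fredholm determinant $\det\bigl(I+(1+C)A(x,y)\bigr)^{-1/2}$, the first-order contribution coming from $g^2_1$ producing only a bounded Gaussian multiplicative factor that is handled by Cauchy--Schwarz. The uniform spectral bound keeps this determinant bounded away from $0$ uniformly on $\bar V\times\bar V$ for any $C<\delta$, giving the desired bound for $U(0,x,y)$. H\"older's inequality with the $L^p$-estimate on $U(\varepsilon,x,y)-U(0,x,y)$ then absorbs the remainder at the cost of replacing $C$ by a slightly smaller positive constant and restricting to $\varepsilon\le\varepsilon_0$. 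The main obstacle is precisely this step: in the fBm setting the quadratic functional involves fractional-Gaussian objects, so one must first use the $K_H$ isometry to reduce to standard Wiener chaos, and then combine Hilbert--Schmidt continuity of $A(x,y)$ with compactness of $\bar V\times\bar V$ to promote Lemma \ref{th: key lemma2} into the required uniform estimate.
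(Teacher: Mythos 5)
Your route differs from the paper's: the paper first splits off the event $\{\|Z^\varepsilon-\phi(x,y)\|_{\lambda,1}\ge\rho\}$ and controls it by a large deviation estimate, then cites Lemma \ref{th: key lemma2} (which is itself \emph{localized} to $\{t\le T^\varepsilon\}$) and promotes it to uniformity by smoothness in $(x,y)$ and compactness of $\overline{V}\times\overline{V}$. You instead try to argue globally: decompose $U(\varepsilon)=U(0)+(U(\varepsilon)-U(0))$, treat $U(0)$ by a Fredholm-determinant/quadratic-form argument via the uniform spectral bound on $A(x,y)$ (a nice idea, and the continuity-plus-compactness reasoning for that bound is sound), and then absorb the remainder by H\"older.

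The gap is in the last step. You bound $U(\varepsilon,x,y)-U(0,x,y)$ in every $L^p$ via Lemma \ref{th: Derivatives Z}, but H\"older's inequality applied to $e^{-(1+C)U(0)}\,e^{-(1+C)(U(\varepsilon)-U(0))}$ requires \emph{exponential} integrability of $U(\varepsilon)-U(0)$ with a good constant, and $L^p$-boundedness for all finite $p$ does not give that. In fact without restricting to the ball $B(\phi,\rho)$ no such exponential estimate is available: $U(\varepsilon)-U(0)$ involves terms like $\bigl(\partial_\varepsilon Z^{\varepsilon v}_1\bigr)^2-\bigl(g^1_1\bigr)^2$ and $\partial_\varepsilon^2 Z^{\varepsilon v}_1-g^2_1$, and the appendix controls the exponential tails of exactly these objects (the $R_i$'s) \emph{only on the event} $t\le T^\varepsilon$; off that event Lemma \ref{th: key lemma1} gives nothing, and indeed the paper must invoke the large-deviation bound \eqref{th: U1} to dispose of that contribution. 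Your write-up never addresses the complementary event, so the argument does not close. To repair it you would have to reinstate the localization and prove the LDP estimate for the bad set, at which point you essentially recover the paper's proof, with your Fredholm-determinant computation serving as an alternative (but equivalent) treatment of the on-ball part that the paper handles through Lemma \ref{th: key lemma2}.
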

\begin{proof}
We only sketch the proof. Details can be found in Ben Arous\cite{benarous2} (with minor modifications) and will not be repeated here.

Fix any $1/2<\lambda<H$. One can show that for $\rho>0$ there exist constants $C>0$, $b>0$ and $\varepsilon_0>0$ such that for all $\varepsilon<\varepsilon_0$ and all $(x,y)\in V\times V$ we have
\begin{align}\label{th: U1}
\me_x\left\{e^{-(1+C)U(\varepsilon,x,y)}; \left\|Z^\varepsilon_t-\phi_t(x,y)\right\|_{\lambda,1}\geq\rho\right\}\leq e^{\frac{-b}{\varepsilon^2}}.
\end{align}
Here $\|\cdot\|_{\lambda,t}$ is the $\lambda$-H\"{o}lder norm up to time $t$. The above estimate is a consequence of the following application of the large deviation principle to $X^\varepsilon_1$, i.e.,
$$\limsup_{\varepsilon\rightarrow 0}\varepsilon^2\log\me_x\left\{e^{-\frac{F(x,y,X^\varepsilon_1)}{\varepsilon^2}}; \|X^\varepsilon-\phi(x,y)\|_{\lambda,1}\geq \rho\right\}<-a(x,y)=0.$$

On the other hand, applying Lemma \ref{th: key lemma2} we have, for each $(x,y)\in V\times V$ there exists $C>0$ and $\varepsilon_0>0$ such that
$$\sup_{\varepsilon<\varepsilon_0}\me_x\left\{e^{-(1+C)U(\varepsilon,x,y)}; \|Z^\varepsilon-\phi(x,y)\|_{\lambda,1}\leq \rho\right\}<\infty.$$
Since we have smoothness of $Z^\varepsilon(x,y)$ (in $x$ and $y$) and $V\times V$ is contained in a compact subset of $M\times M$, the above estimate leads to
\begin{align*}
\sup_{\varepsilon<\varepsilon_0}\sup_{(x,y)\in V\times V}\me_x\left\{e^{-(1+C)U(\varepsilon,x,y)}; \|Z^\varepsilon-\phi(x,y)\|_{\lambda,1}\leq \rho\right\}<\infty.
\end{align*} 
Together with (\ref{th: U1}) the proof is completed. 

\end{proof}

Set
$$\Upsilon(\varepsilon,x,y)=e^{i\zeta\cdot V(\varepsilon,x,y)-U(\varepsilon,x,y)}$$
and consider the stochastic Taylor expansion for it
\begin{align}\label{upsilon}\Upsilon(\varepsilon,x,y,\zeta)=\sum_{m=0}^N\partial_\varepsilon^m\Upsilon(0,x,y,\zeta)\frac{\varepsilon^m}{m!}+S_{N+1}(\varepsilon,x,y,\zeta)\varepsilon^{N+1},\end{align}
where
$$S_{N+1}(\varepsilon,x,y,\zeta)=\int_0^1\partial_\varepsilon^{N+1}\Upsilon(\varepsilon v,x,y,\zeta)\frac{(1-v)^N}{N!}dv.$$

\begin{lemma}\label{th: Derivatives upsilon}
For any non-negative integers $k,l,m$ and $n$, and multi-indices $\alpha\in\{1,2,...,d\}^k$ and $\beta\in\{1,2,...,d\}^l$, we have

(1) For all $p\geq 2$, there exists $\varepsilon_0>0$ such that
$$\sup_{\varepsilon\leq\varepsilon_0}\sup_{x,y\in V\times V}\me\bigg(\sum_{j=0}^n\sup_{t\in[0,1]}\|D^j(\partial_x^\alpha\partial_y^\beta\partial_\varepsilon^m i\zeta\cdot V(\varepsilon,x,y)-U(\varepsilon,x,y)\|_{\mathrm{HS}}^p\bigg)<\infty.$$

(2) There exist $C>0, K>0$ and $\varepsilon_0>0$ such that
$$\sup_{\varepsilon\leq\varepsilon_0}\sup_{x,y\in V\times V}\me\bigg(\sum_{j=0}^n\sup_{t\in[0,1]}\|D^j(\partial_x^\alpha\partial_y^\beta\partial_\varepsilon^m \Upsilon(\varepsilon,x,y,\zeta)\|_{\mathrm{HS}}^{1+C}\bigg)<K\big(\|\zeta\|+1\big)^{m+k+l}.$$
Moreover, we have
$$\sup_{\varepsilon\leq\varepsilon_0}\sup_{x,y\in V\times V}\me\bigg(\sum_{j=0}^n\sup_{t\in[0,1]}\left\|D^j(\partial_x^\alpha\partial_y^\beta\partial_\varepsilon^m \big(e^{i\zeta\cdot g^1_1(x,y)}\Upsilon(\varepsilon,x,y,\zeta)\big)\right\|_{\mathrm{HS}}^{1+C}\bigg)<K\big(\|\zeta\|+1\big)^{m+k+l}.$$
\end{lemma}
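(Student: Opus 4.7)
My plan is to reduce both parts of the lemma to Lemma \ref{th: Derivatives Z} and Lemma \ref{th: U(x,y)} by means of the chain rule, the Leibniz rule, and Fa\`a di Bruno's formula, exploiting that $F$ is smooth and bounded and that $|\Upsilon|=e^{-U}$.

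For part~(1), I would first rewrite $V$ and $U$ as second-order Taylor remainders,
\[
V(\varepsilon,x,y)=\varepsilon\int_0^1(1-v)\,\partial_\varepsilon^2 Z_1^{v\varepsilon}(x,y)\,dv,\qquad
U(\varepsilon,x,y)=\int_0^1(1-v)\,\partial_\varepsilon^2\bigl[F(x,y,Z_1^{v\varepsilon}(x,y))\bigr]\,dv.
\]
Since differentiation in $\varepsilon,x,y$ and in the Malliavin sense commutes with $\int_0^1\,dv$, each $D^j\partial_x^\alpha\partial_y^\beta\partial_\varepsilon^m(i\zeta\cdot V-U)$ becomes a polynomial combination of bounded derivatives of $F$ and of directional/Malliavin derivatives of $Z_1^\varepsilon$, and carries at most one factor of $\zeta$. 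Lemma \ref{th: Derivatives Z} supplies uniform $L^p$ control of all such derivatives of $Z^\varepsilon$, so H\"older's inequality immediately yields~(1), with the bound depending polynomially on $\zeta$.

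For the first bound in part~(2), I plan to apply Fa\`a di Bruno to $\Upsilon=e^{\Psi}$ with $\Psi:=i\zeta\cdot V-U$: then $D^j\partial_x^\alpha\partial_y^\beta\partial_\varepsilon^m\Upsilon$ equals $\Upsilon$ times a finite linear combination of products of derivatives of $\Psi$. Each factor is linear in $\zeta$, so the overall polynomial in $\zeta$ has degree bounded by the total order of differentiation; the contribution from the Malliavin index $j\le n$ is absorbed into the constant $K$, leaving the stated $(\|\zeta\|+1)^{m+k+l}$. To estimate the $L^{1+C}$-moment of each term, I would pull out $|\Upsilon|=e^{-U}$ and apply a three-factor H\"older inequality for a suitable $C'>C$: Lemma \ref{th: U(x,y)} controls the exponential factor $e^{-(1+C')U}$, while the remaining factors are higher $L^p$-moments of derivatives of $\Psi$, already bounded by part~(1).

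The second bound in~(2) will follow by applying Leibniz to $e^{i\zeta\cdot g^1_1(x,y)}\Upsilon$. Since $g^1_1$ is independent of $\varepsilon$, only $\partial_x^\alpha$, $\partial_y^\beta$ and $D^j$ touch the oscillating prefactor, and each such differentiation yields a modulus-one exponential times a polynomial in $\zeta$ of degree equal to the number of differentiations applied, multiplied by a derivative of $g^1_1$; these derivatives have uniform moments by Lemma \ref{th: Derivatives Z} applied to the linearised equation~(\ref{chi}). The main technical nuisance I anticipate is the clean bookkeeping of $\zeta$-powers under the Fa\`a di Bruno expansion and the verification that the exponential weight $e^{-U}$ indeed absorbs all polynomial derivative factors in $L^{1+C}$ uniformly in $(\varepsilon,x,y)\in[0,\varepsilon_0]\times V\times V$; once these points are settled, the rest is a routine application of H\"older combined with the moment bounds already in hand.
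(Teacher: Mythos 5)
Your proposal tracks the paper's own proof essentially step by step: part (1) is obtained by writing $V$ and $U$ as second-order Taylor remainders and invoking Lemma~\ref{th: Derivatives Z} together with H\"older; the first bound in (2) is obtained by writing $\partial_x^\alpha\partial_y^\beta\partial_\varepsilon^m\Upsilon = W\Upsilon$ (your Fa\`a di Bruno expansion is precisely the paper's polynomial coefficient $W$), then splitting off $|\Upsilon|=e^{-U}$ and balancing exponents against Lemma~\ref{th: U(x,y)}; the second bound in (2) is Leibniz against $e^{i\zeta\cdot g^1_1}$ combined with the moment bounds already in hand. This is the same route the authors take, so the proposal is correct. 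One small slip: when invoking Lemma~\ref{th: U(x,y)} you write ``for a suitable $C'>C$''; since the lemma gives integrability of $e^{-(1+C)U}$ for a \emph{fixed} $C$, the H\"older exponent you allocate to the $e^{-U}$ factor must not exceed $1+C$, so you want $C'\leq C$ (i.e.\ you give up a margin, not gain one). Also note that you inherit the paper's imprecision in the $\zeta$-degree bookkeeping: the Malliavin derivatives $D^j$ do contribute further $\zeta$-factors through $D^j\Upsilon$, so the exponent is really controlled by the total order $m+k+l+n$ rather than $m+k+l$; since the paper's statement and its later use both absorb this into the free choice of $p$, this does not affect the overall scheme, but it is worth being aware that ``absorbed into the constant $K$'' is not literally accurate.
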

\begin{proof}
We follow the argument in Ben Arous\cite{benarous2}. Note that
$$i\zeta\cdot V(\varepsilon,x,y)-U(\varepsilon,x,y)=i\zeta\int_0^1\partial_\varepsilon^2Z^{\varepsilon v}_1(x,y)(1-v)dv-\int_0^1\partial^2_\varepsilon\theta(\varepsilon v,x,y)(1-v)dv.$$
The estimate in (1) follows directly from Lemma \ref{th: Derivatives Z}.

For the second statement, first note that $$e^{-U}\in \mathrm{Dom}(D).$$
This is seen by an approximating argument and that $D$ is a closed operator. Moreover, we have
$$D (e^{-U})=-(DU)e^{-U}.$$
Hence $\Upsilon$ is also in the domain of $D$. 

It is clear that $\partial^\alpha_x\partial^\beta_y\partial^m_\varepsilon\Upsilon$ is of the form $W\Upsilon$, where $W$ is a polynomial in $\zeta$ of degree $m+|\alpha|+|\beta|$ with coefficients derivatives (w.r.t. $x, y$ and $\varepsilon$) of $U(\varepsilon,x,y)$ and $V(\varepsilon,x,y)$. Moreover,
$$D(\partial^\alpha_x\partial^\beta_y\partial^m_\varepsilon\Upsilon)=(DW+i\zeta\cdot DV-DU)\Upsilon.$$
The first estimate in (2) now follows immediately from (1) and Lemma \ref{th: U(x,y)}. The last estimate in (2) then follows from the first one in (2) and Lemma \ref{th: Derivatives Z}. This completes the proof.

\end{proof}
With the above lemma, we are now able to obtain an asymptotic expansion for 
$$\me_x\left[\exp\bigg(i\zeta\cdot g^1_1(x,y)\bigg)\exp\bigg(i\zeta\cdot V(\varepsilon,x,y)-U(\varepsilon,x,y)\bigg)\right].$$ 
Define
$$\alpha_m(x,y,\zeta)=\me_x\bigg[\exp\big(i\zeta\cdot g^1_1(x,y)\big)\partial^m_\varepsilon\Upsilon (0,x,y,\zeta)\bigg],$$
and
$$T_{N+1}(\varepsilon,x,y,\zeta)=\me_x\bigg[\exp\big(i\zeta\cdot g^1_1(x,y)\big)S_{N+1}(\varepsilon,x,y,\zeta)\bigg].$$
Recall (\ref{upsilon}), we obtain
\begin{align*}
&\me_x\left[\exp\bigg(i\zeta\cdot g^1_1(x,y)\bigg)\exp\bigg(i\zeta\cdot V(\varepsilon,x,y)-U(\varepsilon,x,y)\bigg)\right]\\
=&\me_x\left[\exp\bigg(i\zeta\cdot g^1_1(x,y)\bigg)\Upsilon(\varepsilon,x,y,\zeta)\right]\\
=&\sum_{m=0}^{N}\alpha_m(x,y,\zeta)\varepsilon^m+T_{N+1}(\varepsilon,x,y,\zeta)\varepsilon^{N+1}.
\end{align*}
\begin{remark}
Indeed, Lemma \ref{th: Derivatives upsilon} provides us smoothness and boundedness of $\alpha_m$ and $T_{N+1}$.
\end{remark}
So far, we have obtained that for all $\zeta\in\mr^d$
\begin{align*}
&\me_x\exp\left( \frac{i\zeta\cdot(X_1^\varepsilon-y)}{\varepsilon}-\frac{F(x,y,X_1^\varepsilon)}{\varepsilon^2} \right)\\
=&e^{-\frac{a(x,y)}{\varepsilon^2}}\bigg(\sum_{m=0}^{N}\alpha_m(x,y,\zeta)\varepsilon^m+T_{N+1}(\varepsilon,x,y,\zeta)\varepsilon^{N+1}\bigg)\\
=&\sum_{m=0}^{N}\alpha_m(x,y,\zeta)\varepsilon^m+T_{N+1}(\varepsilon,x,y,\zeta)\varepsilon^{N+1}.
\end{align*}
To apply the inversion of Fourier transformation, we need integrability of $\alpha_m$ and $T_{N+1}$ in $\zeta$, which is answered in the following lemma.

\begin{lemma}
For any non-negative integers $p, k$ and $l$, and multi-indices $\alpha\in\{1,2,...,d\}^k$ and $\beta\in\{1,2,...,d\}^l$, we have

(1) There exists $K=K_p(\alpha,\beta)>0$ such that
$$\sup_{(x,y)\in V\times V}\left|\partial_x^\alpha\partial_y^\beta\alpha_m(x,y,\zeta)\right|\leq\frac{K}{\|\zeta\|^{2p}}\left(\|\zeta\| +1\right)^{m+k+l}.$$

(2) There exists $\varepsilon_0>0$ and $K=K(p,N,\alpha,\beta,m)>0$ such that 
$$\sup_{\varepsilon<\varepsilon_0}\sup_{(x,y)\in V\times V}\left|\partial_x^\alpha\partial_y^\beta\partial_\varepsilon^m T_{N+1}(\varepsilon,x,y,\zeta)\right|\leq\frac{K}{\|\zeta\|^{2p}}\left(\|\zeta\| + 1\right)^{(N+1)+k+l}.$$

\end{lemma}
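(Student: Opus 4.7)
The shape of the target bounds, namely a factor $\|\zeta\|^{-2p}$ together with polynomial growth $(\|\zeta\|+1)^{m+k+l}$, is the classical signature of Malliavin integration by parts against an oscillatory exponential $e^{i\zeta\cdot g^{1}_{1}(x,y)}$. The plan is therefore to set up a Malliavin calculus argument on the Wiener space $\mathbb{W}^{\otimes d}$, based on the non-degeneracy of the Gaussian random vector $g^{1}_{1}(x,y)$ coming from Lemma \ref{th: theta 1} and the first assumption of \ref{H}.

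First I would establish uniform non-degeneracy of the Malliavin matrix $\sigma_{g^{1}_{1}(x,y)}$ on $V\times V$. Since $g^{1}_{1}(x,y)=d\Phi(\gamma(x,y))B+Y(x,y)$, the matrix $\sigma_{g^{1}_{1}(x,y)}$ is the deterministic matrix associated to the bounded operator $d\Phi(\gamma(x,y))\circ K_H^{*}$. Using the explicit ODE (\ref{chi}) for $\chi=d\Phi(\gamma)k$ together with the ellipticity assumption that $V_1(x),\dots,V_d(x)$ form a basis of $\mathbb{R}^d$, one sees that $\sigma_{g^{1}_{1}(x,y)}$ is invertible and, by continuity and compactness of $\overline{V\times V}$, that $\det\sigma_{g^{1}_{1}(x,y)}^{-1}$ is bounded uniformly on $V\times V$. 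All Malliavin-Sobolev norms of $g^{1}_{1}(x,y)$, of $\partial_x^{\alpha}\partial_y^{\beta}g^{1}_{1}(x,y)$, and of $\sigma^{-1}_{g^{1}_{1}(x,y)}$ are then uniformly bounded, by Lemma \ref{th: Derivatives Z} applied to the Taylor coefficient $g^{1}=\partial_\varepsilon Z^{\varepsilon}|_{\varepsilon=0}$.

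Next I would apply the integration by parts formula on $\mathbb{W}^{\otimes d}$ to the identity
\[
\zeta^{\mu}\,\mathbb{E}\!\left[e^{i\zeta\cdot g^{1}_{1}(x,y)}\,G(x,y,\zeta)\right]
=(-i)^{|\mu|}\mathbb{E}\!\left[(\partial^{\mu}e^{i\zeta\cdot\,\cdot})(g^{1}_{1}(x,y))\,G(x,y,\zeta)\right]
=\mathbb{E}\!\left[e^{i\zeta\cdot g^{1}_{1}(x,y)}\,H_{\mu}(g^{1}_{1}(x,y),G)\right],
\]
where $G$ is respectively $\partial_x^{\alpha}\partial_y^{\beta}\partial_\varepsilon^{m}\Upsilon(0,x,y,\zeta)$ for (1) and $\partial_x^{\alpha}\partial_y^{\beta}\partial_\varepsilon^{m}S_{N+1}(\varepsilon,x,y,\zeta)$ for (2), and $H_{\mu}(g^{1}_{1}(x,y),G)$ is the iterated Malliavin weight built from $DG,\dots,D^{|\mu|}G$, iterated Malliavin derivatives of $g^{1}_{1}(x,y)$, and powers of $\sigma^{-1}_{g^{1}_{1}(x,y)}$. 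Choosing $|\mu|=2p$ and summing the identity over all multi-indices $\mu$ of length $2p$, and using $\|\zeta\|^{2p}\lesssim\sum_{|\mu|=2p}|\zeta^{\mu}|$, we obtain
\[
\|\zeta\|^{2p}\,\bigl|\partial_x^{\alpha}\partial_y^{\beta}\alpha_m(x,y,\zeta)\bigr|
\le C\sum_{|\mu|=2p}\mathbb{E}\bigl|H_{\mu}(g^{1}_{1}(x,y),\partial_x^{\alpha}\partial_y^{\beta}\partial_\varepsilon^{m}\Upsilon)\bigr|,
\]
and similarly for $T_{N+1}$. Finally, the extra factors of $\zeta$ that $\partial_x^{\alpha}\partial_y^{\beta}$ generate when they hit the exponential $e^{i\zeta\cdot g^{1}_{1}(x,y)}$ are collected into the polynomial factor $(\|\zeta\|+1)^{k+l}$, and the Malliavin-Sobolev bounds of Lemma \ref{th: Derivatives upsilon}(2) on $\partial_x^{\alpha}\partial_y^{\beta}\partial_\varepsilon^{m}\Upsilon$ (respectively on $\partial_\varepsilon^{N+1}\Upsilon$ inside $S_{N+1}$) produce the remaining growth $(\|\zeta\|+1)^{m}$ for (1) and $(\|\zeta\|+1)^{N+1}$ for (2), uniformly in $(x,y)\in V\times V$ and $\varepsilon\le\varepsilon_0$.

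The main obstacle I anticipate is the uniformity in $(x,y)\in V\times V$ of the Malliavin-Sobolev estimates, in particular of $\det\sigma^{-1}_{g^{1}_{1}(x,y)}$ and of the Malliavin derivatives of $\sigma^{-1}_{g^{1}_{1}(x,y)}$. Here is where the specifically fractional-Brownian-motion features of Lemma \ref{lem:apriori} and of the isometry $K_H^{*}:\mathcal{H}\to L^{2}([0,T])$ enter, and where one has to be slightly careful (compared with the Brownian case) because the variance constants and the $L^1$-bound on $\partial_s K_H(\cdot,\cdot)$ used in Lemma \ref{H2} are needed in each integration-by-parts weight. Once that uniform ellipticity of the Malliavin matrix is in hand, the rest of the argument follows Ben Arous \cite{benarous2} nearly verbatim.
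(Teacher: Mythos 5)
Your proposal matches the paper's argument: integrate by parts in the Malliavin sense against the oscillatory factor $e^{i\zeta\cdot g^1_1(x,y)}$, using that the Malliavin matrix of the Gaussian variable $g^1_1(x,y)$ is deterministic, uniformly non-degenerate over $V\times V$ (by ellipticity, continuity, and compactness), and then controlling the integration-by-parts weights via the uniform Malliavin--Sobolev bounds of Lemma \ref{th: Derivatives upsilon}. The one place to be a touch more careful is in statement (2): since $T_{N+1}(\varepsilon,x,y,\zeta)=\mathbb{E}_x[e^{i\zeta\cdot g^1_1(x,y)}S_{N+1}(\varepsilon,x,y,\zeta)]$ and $g^1_1$ depends on $(x,y)$, the correct $G$ in the integration-by-parts identity is $G_2=\partial^\alpha_x\partial^\beta_y\partial^m_\varepsilon\bigl(S_{N+1}e^{i\zeta\cdot g^1_1}\bigr)e^{-i\zeta\cdot g^1_1}$ rather than $\partial^\alpha_x\partial^\beta_y\partial^m_\varepsilon S_{N+1}$ alone — you do acknowledge this when you remark that spatial derivatives hitting the exponential produce the extra $(\|\zeta\|+1)^{k+l}$ factors, so the idea is in place, but the displayed choice of $G$ should be adjusted to what the paper uses (and to the second estimate of Lemma \ref{th: Derivatives upsilon}(2), which is stated precisely for this purpose).
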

\begin{proof}
The lemma follows from integration by parts in Malliavin calculus. Indeed, first note that by equation (\ref{g1}), the Malliavin matrix of $g^1$ is deterministic, non-degenerate and uniform in $x$ and $y$. By Proposition 5.7 and Proposition 5.8 in Shigekawa\cite{Shigekawa} and Lemma \ref{th: Derivatives Z}, for any proper test function $\psi$, $G\in \md^{|\alpha|,q}$, there exist $l_\alpha G$ and $r<q$ such that
$$\me\big(\partial^\alpha\psi(g^1_1)G\big)=E\big(\psi(g^1_1)l_\alpha(G)\big)$$
and
$$\big(\me|l_\alpha(G)|^r\big)^{\frac{1}{r}}\leq K\left(\sum_{j=0}^{|\alpha|}\me\|D^jG\|^q_{\mathrm{HS}}\right)^{\frac{1}{q}}.$$
Here $K$ depends on$|\alpha|$, $g^1_1$ and its Malliavin matrix and $K$ is uniform in $x$ and $y$.

Applying the above integration by parts formula with $$\psi(u)=e^{i\zeta\cdot u}\quad\quad\mathrm{and}\quad \partial^\alpha=\left(\sum_{i=1}^d\partial^2_{u_i}\right)^p.$$
We have
$$\left|\me\big(e^{i\zeta\cdot g^1_1}G\big)\right|\leq \frac{K}{\|\zeta\|^{2p}}\left(\sum_{j=0}^{2p}\me\big(\|D^jG\|^q_{\mathrm{HS}}\big)\right)^{\frac{1}{q}}.$$
Now the lemma follows by Lemma \ref{th: Derivatives upsilon} and replacing $G$ in the above by
$$G_1=\partial^\alpha_x\partial^\beta_y\partial^m_\varepsilon\Upsilon(0,x,y,\zeta),$$
and
$$G_2=\partial^\alpha_x\partial^\beta_y\partial^m_\varepsilon\big(S_{N+1}(\varepsilon,x,y,\zeta)e^{i\zeta\cdot g^1_1}\big)e^{-i\zeta\cdot g^1_1}.$$

\end{proof}

Now we only need to chose $2p> d+(N+1)+k+l$ in the previous lemma and obtain
\begin{align*}
p_\varepsilon(x,y)e^{-\frac{F(x,y,y)}{\varepsilon^2}}=\frac{e^{-\frac{a(x,y)}{\varepsilon^2}}}{\varepsilon^d}\bigg(\sum_{m=0}^{N}\beta_m(x,y)\varepsilon^m+t_{N+1}(\varepsilon,x,y)\varepsilon^{N+1}\bigg).
\end{align*}
Here
$$\beta_m(x,y)=\frac{1}{(2\pi)^d}\int \alpha_m(x,y,\zeta)d\zeta,$$
and
$$t_{N+1}(\varepsilon,x,y)=\frac{1}{(2\pi)^d}\int T_{N+1}(\varepsilon,x,y,\zeta)d\zeta.$$
Notice that the $\beta_m(x,y,\zeta)$ is an odd function in $\zeta$ when $m$ is odd (cf, Ben Arous\cite{benarous2}). Now by the self-similarity of the fractional Brownian motion and it $\varepsilon= t^{H}$ we obtain the desired asymptotic formula for the density function.

\subsection{The on-diagonal asymptotics}

As a straightforward corollary of Theorem \ref{main}, we have the following on-diagonal asymptotics:
\[
p(t;x,x)=\frac{1}{t^{Hd}} \left( a_0(x)+a_1(x)t^{2H}+\cdots+a_n(x)t^{2nH}+o(t^{2nH}) \right).
\]
In this subsection, we analyze the coefficients $a_n(x)$ and show how they are related to some functionals of the underlying  fractional Brownian motion.

We first introduce some notations and remind some results that may be found in \cite{Bau}, \cite{Bau-Cou}, \cite{neuen} and \cite{Friz} 

If $I=(i_1,...,i_k) \in \{ 1,..., d \}^k$ is a word, we denote by
$V_I$ the Lie commutator defined by
\[
V_I = [V_{i_1},[V_{i_2},...,[V_{i_{k-1}}, V_{i_{k}}]...].
\]
The group of permutations of the set $\{ 1, ..., k \}$ is denoted
$\mathfrak{S}_k$. If $\sigma \in \mathfrak{S}_k$, we denote by
$e(\sigma)$ the cardinality of the set
\[
\{ j \in \{1,...,k-1 \} , \sigma (j) > \sigma(j+1) \}.
\]
Finally, for the iterated integrals, defined in Young's
sense, we use the following notations:
\begin{enumerate}
\item
\[
\Delta^k [0,t]=\{ (t_1,...,t_k) \in [0,t]^k, t_1 \leq ... \leq t_k
\};
\]
\item If $I=(i_1,...i_k) \in \{1,...,d\}^k$ is a word with length
$k$,
\begin{equation*}
\int_{\Delta^k [0,t]} dB^I= \int_{0 \leq t_1 \leq ... \leq t_k \leq t}
dB^{i_1}_{t_1} \cdots dB^{i_k}_{t_k}.
\end{equation*}
\item If $I=(i_1,...i_k) \in \{1,...,d\}^k$ is a word with length
$k$,
\[
\Lambda_I (B)_t= \sum_{\sigma \in \mathcal{S}_k} \frac{\left(
-1\right) ^{e(\sigma )}}{k^{2}\left(
\begin{array}{l}
k-1 \\
e(\sigma )
\end{array}
\right) } \int_{0 \leq t_1 \leq ... \leq t_k \leq t}
dB^{\sigma^{-1}(i_1)}_{t_1}  \cdots
dB^{\sigma^{-1}(i_k)}_{t_k},\quad t \ge 0.
\]
\end{enumerate}

\begin{theorem}
\label{thme-dl-pt} For  $f \in C^{\infty}_b( {\mathbb R}^d,
{\mathbb R})$ , $x \in {\mathbb R}^d$, and $N \geq 0$, when $t
\rightarrow 0$,
\begin{align*}
f(X^x_t) & =f(x) + \sum_{k=1}^N t^{2kH}
\sum_{I=(i_1,...i_{2k} ) } (V_{i_1} ... V_{i_{2k}} f) (x)
\int_{\Delta^{2k} [0,1]}  dB^I
+o(t^{(2N+1)H}) \\
&=f \left( \exp \left( \sum_{I, \mid I \mid  \le N }
\Lambda_I (B)_t V_{I} \right) x \right)+ o(t^{NH})
\end{align*}
and
\begin{align*}
\mathbb{E} (f(X^x_t))& =f(x) + \sum_{k=1}^N t^{2kH}
\sum_{I=(i_1,...i_{2k} ) } (V_{i_1} ... V_{i_{2k}} f) (x)
\mathbb{E}\left(\int_{\Delta^{2k} [0,1]}  dB^I \right)
+o(t^{(2N+1)H}) \\
& =\mathbb{E} \left( f \left( \exp \left( \sum_{I, \mid I \mid  \le N }
\Lambda_I (B)_t V_{I} \right) x \right) \right)+ o(t^{NH})
\end{align*}
\end{theorem}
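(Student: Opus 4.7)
The plan is to derive the expansion by iteratively applying the Young change-of-variable formula to $f(X^x_t)$, then invoking the self-similarity of fractional Brownian motion to extract the $t^{kH}$ scaling, and finally using Gaussianity of $B$ to retain only the even-order iterated integrals. First I would write the chain rule for Young's integrals,
\[
f(X^x_t)=f(x)+\sum_{i=1}^d\int_0^t V_i f(X^x_s)\,dB^i_s,
\]
and iterate, at each stage replacing $V_{i_1}\cdots V_{i_k}f(X^x_s)$ by its own Young--Taylor expansion. After $M$ iterations this produces
\[
f(X^x_t)=f(x)+\sum_{k=1}^{M}\sum_{|I|=k}(V_{i_1}\cdots V_{i_k}f)(x)\int_{\Delta^k[0,t]} dB^I + R_M(t),
\]
where the remainder is an $(M+1)$-fold Young integral whose integrand is a composition of the $V_i$'s with $X^x$.

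Next, the self-similarity $(B_{ts})_{s\in[0,1]}\stackrel{d}{=}(t^H B_s)_{s\in[0,1]}$ gives $\int_{\Delta^k[0,t]} dB^I \stackrel{d}{=} t^{kH}\int_{\Delta^k[0,1]} dB^I$, producing the $t^{2kH}$ prefactor for words of even length $2k$. The remainder is controlled by iterating the Young-integral bound of Remark \ref{remark 1}, giving $|R_M(t)|\le C_M\, t^{(M+1)\lambda}\,\|B\|_{\lambda,t}^{M+1}\sup_{|I|=M+1,\,s\le t}|V_I f(X^x_s)|$ for any $\tfrac12<\lambda<H$, and then applying the a priori moment estimate in Lemma \ref{lem:apriori}. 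Taking $M=2N+1$ and $\lambda$ close enough to $H$ delivers $R_M(t)=o(t^{(2N+1)H})$ in the required sense.

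For the expected-value formula, the key observation is that $\int_{\Delta^k[0,1]} dB^I$ is a polynomial of degree $k$ in the centred Gaussian family $\{B^i_s\}$, and hence has zero mean for odd $k$; only words of even length $2k$ survive, yielding the stated expansion. The same chaos argument, combined with the self-similarity rescaling, is what permits one to group the pathwise expansion by even orders after absorbing the odd-order contributions into the remainder. Finally, the exponential reformulation is obtained by recasting $\sum_{|I|=k}V_{i_1}\cdots V_{i_k}\int dB^I$ in terms of iterated Lie brackets $V_I$ weighted by the symmetric functionals $\Lambda_I(B)_t$, via the Chen--Strichartz symmetrization identity already developed in \cite{Bau}, \cite{Bau-Cou}, \cite{neuen}, \cite{Friz}; this reorganization is purely algebraic and does not interact with the probabilistic estimates.

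The main obstacle will be the pathwise control of $R_M(t)$ uniformly in $t$ small: the Young-integral bound accumulates constants geometrically with each iteration, so one must choose $\lambda$ just below $H$ to balance the exponent $(M+1)\lambda$ against $(2N+1)H$, and must verify that the random prefactor $\|B\|_{\lambda,t}^{M+1}\sup|V_I f(X^x_s)|$ has moments integrable enough, via Lemma \ref{lem:apriori}, not to spoil the $o(t^{(2N+1)H})$ rate when one passes from pathwise to expectation.
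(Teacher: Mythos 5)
The paper does not give its own proof of Theorem \ref{thme-dl-pt}: it is recalled from the references \cite{Bau}, \cite{Bau-Cou}, \cite{neuen}, \cite{Friz}, so there is no in-paper argument to compare against. Your outline (iterating the Young change-of-variable formula, bounding the tail via Remark \ref{remark 1} and Lemma \ref{lem:apriori}, using scaling $(B_{ts})_{s\le 1}\stackrel{d}{=}(t^HB_s)_{s\le 1}$, and reorganizing the free Lie algebra terms via Chen--Strichartz) is exactly the route taken in those references, and your remainder estimate with $M=2N+1$ and $\lambda$ close to $H$ is the right way to control the genuine tail of the Chen series.

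There is, however, a genuine gap in how you treat the odd-length words in the pathwise display. You assert that the ``chaos argument'' lets one ``group the pathwise expansion by even orders after absorbing the odd-order contributions into the remainder.'' That cannot work as stated: a word of length $2k-1$ with $1\le k\le N$ contributes a term of exact size $t^{(2k-1)H}$ in the Chen series, which dominates $t^{(2N+1)H}$ as $t\to 0$; for instance the $k=1$ term $\sum_i (V_if)(x)\,B^i_t$ is of order $t^H$ and is in no sense $o(t^{(2N+1)H})$. Zero-mean symmetry of the Gaussian iterated integrals is only available after taking expectation, which is why the second pair of equalities is clean. For the pathwise/in-law expansion you must either retain all words of length $\le 2N+1$ (with $t^{jH}$ prefactors for each word of length $j$), or, if you want to reproduce the first display exactly as written, you need a separate argument pinning down in what precise sense that equality holds — your current sketch does not supply one. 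This needs to be fixed before the outline can be called a proof of the stated theorem.
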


As a consequence, we obtain the following proposition which may be proved as in \cite{cours} (or  \cite{Kunita}).
\begin{proposition}\label{diffusion tangente}
For $N \ge 1$, when $t \rightarrow 0$,
\[
p (t;x_0,x_0)=d^N_t (x_0)+ O\left( t^{H(N+1-d)}\right),
\]
where $d^N_t(x_0)$ is the density at $0$ of the random variable $
\sum_{I, \mid I \mid \le N } \Lambda_I (B)_t V_{I}(x_0)$
\end{proposition}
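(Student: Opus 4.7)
I would derive Proposition~\ref{diffusion tangente} from the stochastic Taylor expansion of Theorem~\ref{thme-dl-pt} together with a Fourier-inversion and Malliavin integration-by-parts argument, following the strategy used in \cite{benarous2} and \cite{Kunita} in the Brownian case. Set
$$Y_t^N:=\sum_{I,\,|I|\le N}\Lambda_I(B)_t\,V_I(x_0)\in\mr^d,\qquad R_t^N:=X_t^{x_0}-x_0-Y_t^N,$$
so that $d_t^N(x_0)$ is the density of $Y_t^N$ at the origin and $p(t;x_0,x_0)$ is the density of $X_t^{x_0}-x_0$ at the origin. Applying Theorem~\ref{thme-dl-pt} to the coordinate functions $f(y)=y^i$ and expanding the flow exponential around the identity, one obtains for every $p\ge 2$ a control $\|R_t^N\|_{L^p}=O(t^{H(N+1)})$, together with the analogous bounds for all Malliavin derivatives of $R_t^N$.

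Under Assumption~\ref{H} the leading $|I|=1$ part of $Y_t^N$, namely $\sum_{i=1}^d B_t^i V_i(x_0)$, is a centred non-degenerate Gaussian vector of scale $t^H$; the higher-order iterated-integral corrections $\Lambda_I(B)_t\,V_I(x_0)$ scale as $t^{H|I|}$ and are subdominant. A perturbation argument analogous to the one used for $g^1_1$ in Section~3 then shows that the rescaled random vector $t^{-H}Y_t^N$ is uniformly Malliavin non-degenerate on $(0,t_0]$: its Malliavin covariance matrix has inverse bounded in every $L^p$ uniformly in $t$, and in particular $Y_t^N$ admits a smooth density. Fourier inversion yields
$$p(t;x_0,x_0)-d_t^N(x_0)=\frac{1}{(2\pi)^d}\int_{\mr^d}\me\!\left[e^{i\zeta\cdot Y_t^N}\!\left(e^{i\zeta\cdot R_t^N}-1\right)\right]d\zeta.$$

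The heart of the proof is to bound this integral by $O(t^{H(N+1-d)})$. I would write
$$e^{i\zeta\cdot R_t^N}-1=i\zeta\cdot\int_0^1 e^{iu\zeta\cdot R_t^N}R_t^N\,du,$$
and then Malliavin-integrate by parts against the elliptic factor $e^{i\zeta\cdot Y_t^N}$, using the inverse Malliavin covariance of $Y_t^N$ to trade every power of $\zeta$ for a Skorokhod divergence; after the rescaling $\eta=t^H\zeta$ each such trade produces a gain of $t^H$. Combined with the $L^p$-control of $R_t^N$ and its Malliavin derivatives, this yields, for every $k\ge 0$,
$$\bigl|\me[e^{i\zeta\cdot Y_t^N}(e^{i\zeta\cdot R_t^N}-1)]\bigr|\le C_k\,t^{H(N+1)}(1+|t^H\zeta|)^{-k}.$$
Choosing $k>d$ and substituting $\eta=t^H\zeta$ produces the overall factor $t^{-Hd}\cdot t^{H(N+1)}=t^{H(N+1-d)}$, which is the bound claimed in the proposition.

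The main obstacle will be the uniform-in-$t$ control of all integration-by-parts weights after the rescaling $\eta=t^H\zeta$: one must show that the inverse Malliavin covariance of $t^{-H}Y_t^N$ and all the stochastic factors produced by iterated Skorokhod divergences remain bounded in every $L^p$ independently of $t\in(0,t_0]$. This ultimately rests on the self-similarity of the fractional Brownian motion, which renders $t^{-H}Y_t^N$ a genuinely time-independent non-degenerate random vector at principal order, together with the ellipticity part of Assumption~\ref{H}. Once this uniform non-degeneracy is in place the Fourier/Malliavin estimates reduce to the Brownian scheme of \cite{benarous2}.
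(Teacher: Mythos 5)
The paper itself offers no proof of Proposition~\ref{diffusion tangente}; it simply refers the reader to \cite{cours} and \cite{Kunita}. Your overall scheme (Fourier inversion of the densities, Malliavin integration by parts against the non-degenerate tangent object, rescaling $\zeta=\eta/t^H$ to trade powers of $\zeta$ for powers of $t^H$) is indeed the one used in those references, and your discussion of the uniform non-degeneracy of $t^{-H}Y^N_t$ and of the integration-by-parts weights is the right thing to worry about. However, the pivotal estimate $\|R^N_t\|_{L^p}=O(t^{H(N+1)})$ on which everything else rests is false once $N\ge 2$, and this is a genuine gap, not a cosmetic slip.

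The problem is that $Y^N_t=\sum_{|I|\le N}\Lambda_I(B)_tV_I(x_0)$ is only the \emph{linear-in-$\Lambda$} part of the flow exponential, whereas the stochastic Taylor expansion of Theorem~\ref{thme-dl-pt} approximates $X^{x_0}_t$ by the full flow $\exp\bigl(\sum_{|I|\le N}\Lambda_I V_I\bigr)(x_0)$. The two differ already at order $t^{2H}$. Concretely,
\begin{align*}
X^{x_0}_t-x_0 &= \sum_i B^i_t\,V_i(x_0)+\sum_{i,j}\Bigl(\int_0^t B^i\,dB^j\Bigr)(V_iV_j)(x_0)+O(t^{3H})\\
&= \sum_{|I|\le 2}\Lambda_I(B)_t\,V_I(x_0)+\frac14\sum_{i,j}B^i_tB^j_t\,(V_iV_j+V_jV_i)(x_0)+O(t^{3H}),
\end{align*}
because only the skew-symmetric part $\tfrac12(\int B^i dB^j-\int B^j dB^i)$ pairs with $\tfrac12[V_i,V_j]$ to reproduce $\Lambda_{(i,j)}V_{(i,j)}(x_0)$; the symmetric part $\tfrac12 B^i_tB^j_t$ pairs with the symmetric second-order operator $V_iV_j+V_jV_i$ and survives. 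Assumption~\ref{H} constrains the commutators $[V_i,V_j]$ but says nothing about the anticommutators $V_iV_j+V_jV_i$, so this $t^{2H}$ term does not vanish. Hence $R^N_t=O(t^{2H})$ for all $N\ge 2$, and plugging this into your own Fourier bound yields only $O(t^{H(2-d)})$, not the claimed $O(t^{H(N+1-d)})$.

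What is missing is the passage from the flow exponential to the linear combination. A correct route is to first run the Fourier/IBP argument against $\widetilde{Y}^N_t:=\exp\bigl(\sum_{|I|\le N}\Lambda_IV_I\bigr)(x_0)-x_0$, for which $\|X_t-x_0-\widetilde{Y}^N_t\|_{L^p}$ and its Malliavin derivatives genuinely are $O(t^{H(N+1)})$, and then, separately, to compare the densities at $0$ of $\widetilde{Y}^N_t$ and $Y^N_t$. This second step is a deterministic change-of-variables comparison between the maps $\lambda\mapsto\exp(\sum_I\lambda_IV_I)(x_0)-x_0$ and $\lambda\mapsto\sum_I\lambda_IV_I(x_0)$, which agree with identical Jacobian at $\lambda=0$ but differ at second order; showing their push-forward densities at the origin agree to the stated order requires an additional argument (parity of the relevant $\zeta$-integrals, or the special geometry of the exponential map under Assumption~\ref{H}) that your sketch does not supply.
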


This proposition may be used to understand the geometric meaning of the coefficients $a_k(x_0)$ of the small-time asymptotics 
\[
p(t;x,x)=\frac{1}{t^{Hd}} \left( a_0(x)+a_1(x)t^{2H}+\cdots+a_n(x)t^{2nH}+o(t^{2nH}) \right).
\]
For instance, by applying the previous proposition with $N=1$, we get
\[
a_0(x_0)=\frac{1}{(2 \pi )^{\frac{d}{2}}} \frac{1}{| \det (V_1(x_0),\cdots,V_d(x_0))|}
\]
The computation of $a_1(x)$ is technically more involved.
We wish to apply the previous proposition with $N=2$. For that, we need to understand the law of the random variable
\[
\Theta_t=\sum_{i=1}^d B^i_t V_i (x_0)+\frac{1}{2} \sum_{1\le i<j \le d}\int_0^t B^i_s dB^j_s -B^j_sdB_i^s [V_i,V_j](x_0).
\]
From the structure equations, we  have
\[
\Theta_t=\sum_{k=1}^d \left( B^k_t +\frac{1}{2} \sum_{1\le i<j \le d}\omega_{ij}^k\int_0^t B^i_s dB^j_s -B^j_sdB^i_s \right) V_k (x_0).
\]
By a simple linear transformation, we are reduced to the problem of the computation of the law of the $\mathbb{R}^d$-valued random variable
\[
\theta_t=\left(B^k_t +\frac{1}{2} \sum_{1\le i<j \le d}\omega_{ij}^k\int_0^t B^i_s dB^j_s -B^j_sdB^i_s \right)_{1 \le k \le d}.
\]
At that time, up to the knowledge of the authors, there is no explicit formula for this distribution.  However, the scaling property of fractional Brownian motion and the inverse Fourier transform formula leads easily to the following expression
\[
p_t(x_0,x_0)=\frac{1}{| \det (V_1(x_0),\cdots,V_d(x_0))|}\frac{1}{(2 \pi t^{2H} )^{d/2}} \left( 1- q_H(\omega) t^{2H} +o(t^{2H}) \right),
\]
where $q_H(\omega)$ is the quadratic form given by
\[
q_H(\omega)=\frac{1}{8(2 \pi )^{\frac{d}{2}}}\int_{\mathbb{R}^d} \mathbb{E} \left( e^{i\langle \lambda, B_1 \rangle} \left( \sum_{1 \le i <j \le d} \langle \omega_{ij}, \lambda \rangle \int_0^1 B^i_s dB^j_s -B^j_sdB^i_s\right)^2\right)d \lambda.
\]

\section{Appendix}
In this last section, we give proofs for the technical lemmas we used before. 

Fix $1/2<\lambda<H$. Let $B(\phi, \rho)\in C^\lambda(0,T;\mr^d)$ be the ball centered at $\phi$ with radius $\rho$ under the $\lambda$-H\"{o}lder topology
$$\|f\|_{\lambda,T}:=\|f\|_\infty+\sup_{0\leq s<t\leq T}\frac{|f(t)-f(s)|}{(t-s)^\lambda},\quad \mathrm{for\ all}\ f\in C^\lambda(0, T; \mr^d).$$
Note that the $\lambda$-H\"{o}lder topology is a stronger topology than the usual supreme topology.

Recall the two expressions for $Z^\varepsilon$
\begin{align}\label{Z equ}
dZ^\varepsilon_t=\sigma(Z^\varepsilon_t)(\varepsilon dB_t+d\gamma_t)+b(\varepsilon, Z^\varepsilon_t)dt
\end{align}
and
\begin{align}\label{Z}Z^\varepsilon=\phi+\sum_{j=0}^N\frac{g_j\varepsilon^j}{j!}+\varepsilon^{N+1}R^\varepsilon_{N+1}.\end{align} 
Here $\gamma\in\msh_H$, hence $\gamma\in I_0^{H+1/2}(L^2)\subset C^H(0,T; \mr^d).$

\subsection{Proof of Lemma \ref{th: key lemma1}} 
We show, for all $t\in[0,T]$, there exists a constant $C$ such that for $r$ large enough we have
\begin{align}\label{key est1-1}
&\mp\{\|g_1\|_{\lambda,t}\geq r\}\leq\exp\left\{-\frac{C r^2}{t^{2H}}\right\}\\
&\mp\{\|g_2\|_{\lambda,t}\geq r\}\leq\exp\left\{-\frac{C r}{t^{2H}}\right\},\nonumber
\end{align}
and
\begin{align}\label{key est1-2}
&\mp\{\|\varepsilon R_1^\varepsilon\|_{\lambda, t}\geq r; t\leq T^\varepsilon\}\leq \rho\\
&\mp\{\|\varepsilon R_2^\varepsilon\|_{\lambda,t}\geq r; t\leq T^\varepsilon\}\leq \exp\left\{-\frac{C r^2}{\rho^2 t^{2H}}\right\}\nonumber \\
&\mp\{\|\varepsilon R_3^\varepsilon\|_{\lambda,t}\geq r; t\leq T^\varepsilon\}\leq \exp\left\{-\frac{C r}{\rho t^{2H}}\right\}.\nonumber
\end{align}
Here $T^\varepsilon$ is the first exist time of $Z^\varepsilon$ from  $B(\phi,\rho)$.

We first prove the estimates for $g_i$'s. Write
\begin{align}\label{sigma1}
\sigma(Z^\varepsilon)=\sigma(\phi)+\sigma_x(\phi)(Z^\varepsilon-\phi)+\frac{1}{2}\sigma_{xx}(\phi)(Z^\varepsilon-\phi)^2+O(\varepsilon^3)
\end{align}
and
\begin{align}\label{b1}
b(\varepsilon, z^\varepsilon)=b(0,\phi)&+b_x(0,\phi)(Z^\varepsilon-\phi)+\frac{1}{2}b_{xx}(0,\phi)(Z^\varepsilon-\phi)^2+O(\varepsilon^3)\\
&\quad\ \quad\quad +b_\varepsilon(0,\phi)\varepsilon+\ \ b_{\varepsilon x}(0,\phi)(Z^\varepsilon-\phi)\varepsilon+O(\varepsilon^3)\nonumber\\&\quad\quad\ \ \quad\quad\quad\quad \quad\quad\quad\ \ \ \ +\ \ \frac{1}{2}b_{\varepsilon\varepsilon}(0,\phi)\varepsilon^2 +O(\varepsilon^3)\nonumber.
\end{align}
Substituting into the two expressions of $Z^\varepsilon$ gives us
\begin{align}\label{g1}
dg_1(s)=\sigma(\phi_s)dB_s+\sigma_x(\phi_s)g_1(s)d\gamma_s+b_x(0,\phi_s)g_1(s)ds+b_\varepsilon(0,\phi_s)ds.
\end{align}
and
\begin{align}\label{g2}
dg_2(s)=&2\sigma_x(\phi_s)g_1(s)dB_s+\sigma_{xx}(\phi_s)g_1(s)^2d\gamma_s+\sigma_x(\phi_s)g_2(s)d\gamma_s\\
&+b_{xx}(0,\phi_s)g_1(s)^2ds+b_x(0,\phi_s)g_2(s)ds+b_{\varepsilon\varepsilon}(0,\phi_s)ds\nonumber\\
&+2b_{\varepsilon x}(0,\phi_s)g_1(s)ds.\nonumber
\end{align}
By (\ref{g1}) and Remark \ref{remark 1}, it is clear that 
$$\|g_1\|_{\lambda, t}\leq C\|B\|_{\lambda, t},\quad\quad t\in[0,T],$$
where $C$ is a constant depending only on $\|\phi\|_{\lambda,T}$, $\|\gamma\|_{\lambda, T}$ and $T$. This gives us the first estimate in (\ref{key est1-1}).

Similarly, by (\ref{g2}) and Remark \ref{remark 1} together with the estimate we just obtained for $g_1$, we have 
\begin{align*}\|g_2\|_{\lambda,t}&\leq C(1+\|g_1\|_{\lambda,t}+\|g_1\|^2_{\lambda,t}+\|g_1\|_{\lambda,t}\|B\|_{\lambda,t})\\
&\leq C\|B\|_{\lambda, t}^2.
\end{align*}
Here $C$ is also constant depending only on $\|\phi\|_{\lambda,T}$, $\|\gamma\|_{\lambda, T}$ and $T$. Hence we have proved (\ref{key est1-1}).

In what follows, we prove (\ref{key est1-2}). To lighten our notation, in discussion that follows, we suppress the supper-script $\varepsilon$ in $R_i^\varepsilon$ whenever there is no confusion. 

Since we work in $B(\phi, \rho)$, the first inequality in (\ref{key est1-2}) is apparent.  We therefore only need to concentrate on the last two inequalities. 

First we use similar idea to deduce the equations satisfied by $R_i, i=1,2,3.$ For this purpose, define $\mu_1, \mu_2$ and $\nu_1, \nu_2$ by
\begin{align}\label{sigma2}
\sigma(Z^\varepsilon)&=\sigma(\phi)+\mu_1\varepsilon\\
&=\sigma(\phi)+\sigma_x(\phi)(Z^\varepsilon-\phi)+\mu_2\varepsilon^2.\nonumber
\end{align}
and
\begin{align}\label{b2}
b(\varepsilon, Z^\varepsilon)&=b(0,\phi)+\nu_1\varepsilon\\
&=b(0,\phi)+b_x(0,\phi)(Z^\varepsilon-\phi)+b_\varepsilon(0,\phi)\varepsilon+\nu_2\varepsilon^2.\nonumber
\end{align}
It is clear that $\mu_i ; i=1,2$ (resp. $\nu_i$) are of the form $\psi_i^\mu(\varepsilon R_1)(R_1)^i$ (resp. $\psi_i^\nu(\varepsilon R_1)(R_1)^i$), where $\psi_i$ are some functions of bounded derivatives determined by $\sigma$ and $b$. Hence in $B(\phi,\rho)$, $\mu_1, \nu_1$ are functions of $R_1$ with bounded derivatives, and there exists a constant $C$, depending only on derivatives of $\sigma$ and $b$, such that
\begin{align}\label{mu,nu}
\|\mu_1\|_{\lambda,t}, \|\nu_1\|_{\lambda,t}\leq C(1+\|R_1\|_{\lambda,t})\quad \mathrm{and}\quad \|\mu_2\|_{\lambda,t}, \|\nu_2\|_{\lambda,t}\leq C(1+\|R_1\|_{\lambda,t})^2.
\end{align}

Equations (\ref{Z}), (\ref{Z equ}), (\ref{sigma2}) and (\ref{b2}) give us
\begin{align}\label{R_1 R_2}
&dR_1(s)=\sigma(Z^\varepsilon_s)dB_s+\mu_1d\gamma_s+\nu_1ds\\
&dR_2(s)=2\mu_1dB_s+2\mu_2d\gamma_s+\sigma_x(\phi)R_2d\gamma_t+b_x(0,\phi_s)R_2ds+2\nu_2ds.\nonumber
\end{align}
Since we work with in $B(\phi, \rho)$, we have
$$\|Z^\varepsilon\|_{\lambda,t}\leq \|\phi\|_{\lambda,t}+\rho$$
hence
\begin{align*}\left\|\int_0^t\sigma(Z^\varepsilon_s)dB_s\right\|_{\lambda,t}<C\|B\|_{\lambda,t}\end{align*}
for some constant $C$ depending only on $\rho$, $\phi$ and derivatives of $\sigma$.
By standard Picard's iteration, we conclude that
\begin{align}\label{R1}\|R_1\|_ {\lambda,t}< C\|B\|_{\lambda,t}(1+\|\gamma\|_{\lambda,t}),\quad\quad\mathrm{in}\ B(\phi,\rho)\end{align}
for some constant $C$ uniformly bounded in $\varepsilon$.

The equation for $R_2$ is
\begin{align}\label{R2 equ}dR_2(s)-\sigma_x(\phi)R_2d\gamma_s-b_x(0,\phi_s)R_2ds=2\mu_1dB_s+2\mu_2d\gamma_s+2\nu_2ds.\end{align}
Recall that $\mu_1$ is of the form $\psi_1(\varepsilon R_1)R_1$, and in $B(\phi,\rho)$,
$$\|\varepsilon R_1\|_{\lambda,t}<\rho.$$
We obtain
$$\left\|\varepsilon\int_0^t\mu_1dB_s\right\|_{\lambda,t}=\left\|\int_0^t\psi_1^\mu(\varepsilon R_1)(\varepsilon R_1)dB_s\right\|_{\lambda,t}<\rho^2 C\|B\|_{\lambda,t}$$
for some constant $C$ uniformly bounded in $\varepsilon$. Similarly,
\begin{align*}\left\|\varepsilon\int_0^t\mu_2d\gamma_s+\varepsilon\int_0^t\nu_2ds\right\|_\alpha&=\left\|\int_0^t\psi_2^\mu(\varepsilon R_1)(\varepsilon R_1^2)d\gamma_s+\int_0^t\psi_2^\nu(\varepsilon R_1)(\varepsilon R_1^2)ds\right\|_{\lambda,t}\\
&\leq \rho^2 C\|R_1\|_{\lambda,t}(1+\|\gamma\|_{\lambda,t})\\
&\leq \rho^2 C\|B\|_{\lambda,t}(1+\|\gamma\|_{\lambda,t})
\end{align*}
for some constant $C$ uniformly bounded in $\varepsilon$. Hence by multiplying a factor $$\exp{\left\{-\int\sigma_x(\phi)d\gamma-\int b_x(0,\phi)du\right\}}$$ to both sides of (\ref{R2 equ}) and integrating from $0$ to $t$, we conclude
\begin{align*}
\mp\{\|\varepsilon R_2\|_{\lambda, t}\geq r; t\leq T^\varepsilon\}\leq \exp\left\{-\frac{C r^2}{\rho^2 t^{2H}}\right\}.
\end{align*}
This gives us the desired estimate for $\varepsilon R_2$. A similar argument also gives us
\begin{align*}
\mp\{\|R_2\|_{\lambda,t}\geq r; t\leq T^\varepsilon\}\leq \exp\left\{-\frac{C r}{\rho t^{2H}}\right\}.
\end{align*}

Continuing this type of argument, the equation for $R_3$ is given by
\begin{align*}
dR_3(s)-\sigma_x(\phi)R_3d\gamma_s-b_\varepsilon(0,\phi)R_3ds&=\sigma(\phi)R_2dB_s+\mu_2dB_s+\mu_3d\gamma_s+\frac{1}{2}\sigma_{xx}(\phi)R_1R_2d\gamma_s\\
&\ \ \quad\quad +\frac{1}{4}b_{xx}(0,\phi)R_1R_2ds+\nu_3ds+b_{\varepsilon,x}(0,\phi)R_2ds.
\end{align*}
By (\ref{mu,nu}) and (\ref{R1}) we conclude that in $B(\phi,\rho)$ we have for all $0<\varepsilon\le\rho$
$$\left\|\varepsilon\int_0^t\sigma(\phi)R_2+\mu_2dB_s\right\|_{\lambda,t}< \rho C\|B\|^2_{\lambda,t},$$
and similar estimates for the rest of the terms on the right hand side of the equation for $R_3$. Hence
\begin{align*}
\mp\{\|\varepsilon R_3\|_{\lambda,t}\geq r; t\leq T^\varepsilon\}\leq \exp\left\{-\frac{C r}{\rho t^{2H}}\right\}.
\end{align*} 
Therefore, we have proved (\ref{key est1-2}).

\subsection{Proof of Lemma \ref{th: key lemma2}}
For the convenience of quick reference, we re-state the lemma here.
\begin{lemma}\label{th: key est2}
Let$$\theta(\varepsilon)=F(Z^\varepsilon)=\theta(0)+\varepsilon\theta'(0)+\varepsilon^2U(\varepsilon)$$
where $U(\varepsilon)=\int_0^1(1-v)\theta''(\varepsilon v)dv$. There exist $\beta>0$ and $\varepsilon_0>0$ such that
$$\sup_{0\leq \varepsilon\leq\varepsilon_0}\me\left(e^{-(1+\beta)U(\varepsilon)}; Z^\varepsilon\in B(\phi,\rho)\right)<\infty.$$
\end{lemma}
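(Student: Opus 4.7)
The strategy is to split $U(\varepsilon)$ into its value at $\varepsilon=0$, which is essentially a quadratic plus linear functional of the fractional Brownian motion, and a remainder of order $\varepsilon$ that is tame on the ball $B(\phi,\rho)$. Integrability of $e^{-(1+\beta)U(0)}$ will come from assumption H\,2 via Lemma \ref{H2}, and the remainder will be absorbed using the a priori estimates of Lemma \ref{th: key lemma1}.

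First, I Taylor-expand once more in $\varepsilon$ to write $U(\varepsilon) = U(0) + \varepsilon R(\varepsilon)$ with a remainder $R(\varepsilon) = \int_0^1\!\int_0^1 (1-v) v\, \theta'''(\varepsilon v u)\,du\,dv$. On the event $\{Z^\varepsilon \in B(\phi,\rho)\}$ this remainder is a polynomial expression in $g_1,g_2,g_3$ and in $\varepsilon R_1^\varepsilon,\varepsilon R_2^\varepsilon,\varepsilon R_3^\varepsilon$, whose $\lambda$--Hölder norms are governed by Lemma \ref{th: key lemma1} (sub-Gaussian tails for $g_1,\varepsilon R_2^\varepsilon$ and sub-exponential tails for $g_2,\varepsilon R_3^\varepsilon$). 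An application of Hölder's inequality with conjugate exponents $p,p'>1$ gives
\begin{equation*}
\me\!\left[e^{-(1+\beta)U(\varepsilon)};Z^\varepsilon\in B(\phi,\rho)\right]
\le \me\!\left[e^{-p(1+\beta)U(0)}\right]^{1/p}
\me\!\left[e^{p'(1+\beta)\varepsilon |R(\varepsilon)|};Z^\varepsilon\in B(\phi,\rho)\right]^{1/p'}.
\end{equation*}
The second factor is bounded uniformly in $\varepsilon \in [0,\varepsilon_0]$ for $\varepsilon_0$ small, because on $B(\phi,\rho)$ the norm of $R(\varepsilon)$ is a polynomial in random variables with stretched exponential tails, while its prefactor is $\varepsilon$.

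Next I handle the main term $\me[e^{-p(1+\beta)U(0)}]$. Direct differentiation of the defining equation for $Z^\varepsilon$ twice at $\varepsilon = 0$ gives $g_1$ and $g_2$ as the solutions of (\ref{chi})--(\ref{psi}) (with an additional affine drift $Y$ from Lemma \ref{th: theta 1} coming from the $\varepsilon$-dependence of $b$). Hence
$$
2U(0) = d^2F(\phi)(g_1,g_1) + dF(\phi)\,g_2.
$$
Using the Volterra representation $B = \int_0^\cdot K_H(t,s)\,d\beta_s$, I rewrite $2U(0)$ as $(A h, h)_\msh + L(\beta) + c$ where $h=(K_H^*)^{-1}\beta$ interpreted as an element of the abstract Wiener space, $A$ is precisely the bounded self-adjoint Hilbert--Schmidt operator on $\msh$ of Lemma \ref{H2} (identified by matching the kernels in (\ref{chi2})--(\ref{V}) with the Hessian expansion of $F\circ\Phi+\tfrac{1}{2}\|\cdot\|^2_{\msh_H}$), $L$ is a continuous linear functional on the Wiener space, and $c$ is a deterministic constant. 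By assumption H\,2 and Lemma \ref{H2}, the operator $I+A$ is strictly positive with smallest eigenvalue $\delta>0$, so $I+(1+2\beta)A$ is still strictly positive for $\beta$ small enough. The standard formula for the Laplace transform of a second-chaos element then gives $\me[e^{-p(1+\beta)U(0)}]<\infty$ for $p$ close enough to $1$, the linear term $L(\beta)$ being harmless since it is Gaussian (handle it by Cauchy--Schwarz or by completing the square).

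\textbf{Main obstacle.} The delicate step is the identification of the quadratic part of $2U(0)$ with the Hilbert--Schmidt operator $A$ of Lemma \ref{H2}: one must carry the Volterra kernel $K_H$ and its dual $K_H^*$ through the computation, and use the critical-point identity (\ref{idty}) to see that the linear contribution $dF(\phi)(d\Phi(\gamma)B)$ is exactly the $\gamma$--Cameron--Martin shift absorbed when one conditions at a minimum. Once this bookkeeping is settled, the integrability step is classical Gaussian analysis and the remainder estimate is a routine consequence of Lemma \ref{th: key lemma1}.
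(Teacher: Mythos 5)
Your decomposition $U(\varepsilon)=U(0)+\varepsilon R(\varepsilon)$ and the treatment of the remainder via Lemma \ref{th: key lemma1} match the paper's, and the Hölder split you use to separate the two factors is implicit in the paper's reduction as well. The real divergence is in how you handle the main term $U(0)=\tfrac12\theta''(0)=\tfrac12\big(dF(\phi)g_2+d^2F(\phi)g_1^2\big)$. You propose the classical Gaussian-chaos route: expand $2U(0)$ as a second-chaos quadratic form whose kernel is, after pushing through the Volterra kernel $K_H$, identified with the Hilbert--Schmidt operator $A$ from Lemma \ref{H2}, plus a first-chaos term and a constant, then invoke the explicit Laplace transform of a Gaussian quadratic form together with H\,2 (which places all eigenvalues of $A$ strictly above $-1$). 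The paper instead avoids the explicit chaos decomposition entirely: it sets $Y^\varepsilon=(\varepsilon g_1,\varepsilon^2 g_2)$, observes that $\mp\{-\tfrac12[dF(\phi)g_2+d^2F(\phi)g_1^2]\ge 1/\varepsilon^2\}=\mp\{Y^\varepsilon\in A\}$ for the closed set $A=\{dF(\phi)\psi_2+d^2F(\phi)\psi_1^2\le -2\}$, and then uses the large deviation principle for $Y^\varepsilon$ to reduce the exponential integrability to showing $\Lambda^*(A)>1$. That strict inequality is proved from H\,2 via the variational characterization $\Lambda^*(A)=\inf\{\tfrac12|k|^2_{\msh_H}:\Phi^*(k)\in A\}$ and the identity $\Phi^*(k)=(d\Phi(\gamma)k,\ d^2\Phi(\gamma)k^2)$. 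Your route would also work, but it demands the full bookkeeping of matching the second-variation kernel to $A$ in the Volterra coordinates, which the LDP argument sidesteps; the LDP argument only needs the sign information encoded in H\,2, not the spectral decomposition. Both approaches buy the same conclusion, but the paper's is shorter once the LDP for $(\varepsilon g_1,\varepsilon^2 g_2)$ is granted.

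One small point to watch in your proposal: for the first-chaos (linear) piece, Cauchy--Schwarz alone will not preserve the positivity margin of $I+(1+\beta)A$, because doubling the exponent can push you past the spectral threshold $\lambda_1>-1$; completing the square (which you also mention) is the correct way to absorb the linear term.
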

Observe that 
\begin{align*}
(Z^\varepsilon-\phi)^2=\varepsilon^2g_1^2+\frac{1}{2}\varepsilon^3g_1R_2+\frac{1}{2}\varepsilon^3R_1R_2.
\end{align*} 
Thus, if we write
$$U(\varepsilon)=\frac{1}{2}\theta''(0)+\varepsilon R(\varepsilon)$$
then

\begin{align}\label{R}|R(\varepsilon)|\leq C(|R_3|+|g_1||R_2|+|R_2||R_1|+|R_1^3|).\end{align}
Together with the fact that $|\varepsilon R_1|\leq\rho$ we obtain
$$|\varepsilon R(\varepsilon)|\leq C(|\varepsilon R_3|+|g_1||\varepsilon R_2|+\rho|R_2|+\rho|R_1^2|).$$
Hence, from the estimates in Lemma \ref{th: key lemma1}, we conclude that for each $\alpha>0$, there exists  $\rho(\alpha)$ such that for all $\varepsilon\leq\rho\leq\rho(\alpha)$, we have
$$\sup_{0\leq \varepsilon\leq\rho}\me\left(e^{(1+\alpha)|\varepsilon R(\varepsilon)|}; t\leq T^\varepsilon\right)<\infty.$$ Therefore, to prove Lemma \ref{th: key lemma2} is reduced to prove the following
\begin{lemma}
There exists a $\beta>0$ such that
$$\me\exp\left\{-(1+\beta)\left[\frac{1}{2}\theta''(0)\right]\right\}<\infty.$$
\end{lemma}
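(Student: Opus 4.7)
The plan is to identify $\tfrac12\theta''(0)$ as a polynomial of degree two in the Gaussian process $B$ (quadratic plus linear plus constant part) and compute the Laplace transform explicitly from the spectrum of the Hilbert--Schmidt operator $A$ from Lemma \ref{H2}. Starting from $\theta''(0)=d^2F(\phi)(g_1,g_1)+dF(\phi)(g_2)$, I use the representation $g_1=\chi(B)+Y$ from Lemma \ref{th: theta 1} and (\ref{g1}), where $\chi=d\Phi(\gamma)$ extends continuously to paths via (\ref{continuity}) and $Y$ is deterministic. Re-doing the same decomposition for the linear equation (\ref{g2})-(\ref{psi}) yields $g_2=\psi(B,B)+L_2(B)+c_2$ with $\psi=d^2\Phi(\gamma)$, $L_2$ linear in $B$ and $c_2$ deterministic. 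Combining with the chain rule
\begin{equation*}
d^2(F\circ\Phi)(\gamma)(k,k)=d^2F(\phi)(\chi(k),\chi(k))+dF(\phi)(\psi(k,k)),
\end{equation*}
one obtains the clean split
\begin{equation*}
\tfrac{1}{2}\theta''(0)=\tfrac{1}{2}\,d^2(F\circ\Phi)(\gamma)(B,B)+L(B)+c,
\end{equation*}
where $L(B)$ is a first Wiener chaos (a Wiener integral against $B$) and $c$ is deterministic.

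\textbf{Spectral representation and Laplace moment.} By Lemma \ref{H2}, the form $d^2(F\circ\Phi)(\gamma)$ corresponds via $K_H$ to a self-adjoint Hilbert--Schmidt operator $A$ on $\msh$. Diagonalize $A=\sum_i \mu_i\, e_i\otimes e_i$ with $\{e_i\}$ an ONB of $\msh$ and $\sum_i\mu_i^2<\infty$, and set $X_i=\int_0^T\dot{e}_i(s)\,d\beta_s$, so that $\{X_i\}$ are i.i.d.\ standard Gaussian. The Wiener--It\^o chaos decomposition identifies
\begin{equation*}
d^2(F\circ\Phi)(\gamma)(B,B)=\sum_i \mu_i(X_i^2-1)+c_0,\qquad L(B)=\sum_i b_i X_i,\qquad \sum_i b_i^2<\infty.
\end{equation*}
Assumption H 2 applied to $k=K_Hh$ reads $(Ah,h)_{\msh}+\|h\|_{\msh}^2>0$ for $h\neq 0$, i.e., $I+A>0$ on $\msh$; since $A$ is compact the smallest eigenvalue satisfies $\mu_{\min}>-1$, hence $|\mu_{\min}|<1$. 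Pick $\beta>0$ small enough that $(1+\beta)|\mu_{\min}|<1$. By independence of the $X_i$'s and the Gaussian identity
$\me[e^{-aX^2/2-bX}]=(1+a)^{-1/2}\exp(b^2/(2(1+a)))$ valid for $a>-1$,
\begin{equation*}
\me\bigl[e^{-(1+\beta)\left(\tfrac{1}{2}\theta''(0)-c_\star\right)}\bigr]=\prod_i\frac{e^{(1+\beta)\mu_i/2}}{\sqrt{1+(1+\beta)\mu_i}}\exp\!\Bigl(\frac{(1+\beta)^2 b_i^2}{2(1+(1+\beta)\mu_i)}\Bigr)
\end{equation*}
for an explicit deterministic constant $c_\star$. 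Each factor is positive and finite, and after taking logarithms the general term is $O(\mu_i^2)+O(b_i^2)$, so the product converges thanks to $\sum\mu_i^2+\sum b_i^2<\infty$. Absorbing $e^{(1+\beta)c_\star}$ then gives the lemma.

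\textbf{Main obstacle.} The delicate step is making the Wiener chaos identification rigorous: $d^2(F\circ\Phi)(\gamma)$ is a priori defined only on $\msh_H\times\msh_H$, whereas $B\notin\msh_H$ almost surely, so the substitution $k=B$ requires interpretation. Following the splitting $A=\tilde A+(A-\tilde A)$ used inside the proof of Lemma \ref{H2}, the operator $\tilde A$ associated to (\ref{V}) is defined from an explicit $L^2$ kernel and extends immediately to a second-chaos element via Wiener integration, while the remainder $A-\tilde A$ is controlled by the sup-norm bound $\|\chi(K_Hh)\|_\infty\leq C\|h\|_\infty$ and is handled by approximating $B$ via finite-dimensional $\msh$-projections and passing to the limit (the convergence is preserved in $L^2$ and, by Hilbert--Schmidt bounds, in exponential moments). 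Once this identification is in hand, the moment computation above is essentially bookkeeping, and the role of hypothesis H 2 is precisely to ensure $|\mu_{\min}|<1$, giving room for a strictly positive $\beta$.
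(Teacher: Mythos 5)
Your proof attempts an explicit spectral/Wiener-chaos argument: diagonalize the Hilbert--Schmidt operator $A$ from Lemma \ref{H2}, identify $\tfrac12\theta''(0)$ with a second-chaos element plus first chaos plus constant, and evaluate the Laplace transform by an infinite product of Gaussian integrals. This is genuinely different from the paper's argument. The paper never attempts to extend the quadratic form $d^2(F\circ\Phi)(\gamma)$ off $\msh_H$; instead it recasts the statement as a tail bound $\mp\{-\tfrac12[dF(\phi)g_2+d^2F(\phi)g_1^2]\geq r\}\leq e^{-Cr}$ with $C>1$, rescales $(g_1,g_2)$ into the auxiliary process $Y^\varepsilon=(\varepsilon g_1,\varepsilon^2 g_2)$, and invokes the large deviation principle for $Y^\varepsilon$. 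The rate function on the bad set is then bounded below using exactly the same consequence of H 2 that you use -- namely $d^2(F\circ\Phi+\tfrac12\|\cdot\|^2_{\msh_H})(\gamma)k^2\geq\nu\|k\|_{\msh_H}^2$ -- to force $\Lambda^*(A)>1$. So the role of H 2 is the same in both proofs; what differs is the machinery used to turn it into an exponential moment.

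Where your argument buys something is explicitness: it produces the sharp threshold $\beta$ and an explicit formula for the Laplace transform. Where it costs is precisely the step you flag as the ``main obstacle.'' Identifying $\tfrac12\theta''(0)-L(B)-c$ with $\tfrac12\sum_i\mu_i(X_i^2-1)+c_0$ requires more than $A$ being Hilbert--Schmidt: since $\sum_i\mu_i X_i^2$ need not converge, the constant $c_0$ encodes a trace-type renormalization of the pathwise Young double integral when expressed against the underlying Brownian $\beta$, and its finiteness is not automatic from Lemma \ref{H2}. Your proposed remedy (split $A=\tilde A+(A-\tilde A)$, approximate $B$ by finite-dimensional $\msh$-projections, pass to the limit in $L^2$ and in exponential moments) is plausible but would need to be carried out carefully, in particular justifying convergence of exponential moments along the approximation -- this is exactly the kind of work the large-deviation route is designed to avoid. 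A smaller point: the condition you impose, $(1+\beta)|\mu_{\min}|<1$, is only what is needed when $\mu_{\min}<0$; the hypothesis H 2 gives $\mu_{\min}>-1$, not $|\mu_{\min}|<1$, so the statement should be phrased as: choose $\beta$ with $(1+\beta)\max(-\mu_{\min},0)<1$, which is possible since $\mu_{\min}>-1$.
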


\begin{proof}
We follow the proof in Ben Arous\cite{Ben Arous}. Since
$$U(0)=\frac{1}{2}\theta''(0)=\frac{1}{2}\bigg[dF(\theta)g_2+d^2F(\theta)g^2_1\bigg],$$
it is clear that to prove the above lemma, it suffices to prove that for sufficiently large $r$ we have
\begin{align}\label{ineq 1}\mp\left\{-\frac{1}{2}\bigg[dF(\phi)g_2+d^2F(\phi)g^2_1\bigg]\geq r\right\}\leq e^{-C r}, \quad\mathrm{with}\ C>1.\end{align}

Set $$Y^\varepsilon=(\varepsilon g_1, \varepsilon^2 g_2)$$
with $$dY^\varepsilon_s=\varepsilon\bar{\sigma}(s, Y^\varepsilon)dB_s+\bar{b}(\varepsilon, s, Y^\varepsilon)ds,\quad\quad Y^\varepsilon_0=0.$$
Here $\bar{\sigma}$ and $\bar{b}$ are determined by (\ref{g1}) and (\ref{g2}).   Define $A\subset C([0,T], \mr^{2d})$ by
$$A=\{\psi=(\psi_1,\psi_2)\in C([0,T], \mr^d\times\mr^d):\ dF(\phi)\psi_2+d^2F(\phi)\psi^2_1\leq -2\}.$$ 
We have
$$\mp\{Y^\varepsilon\in A\}=\mp\left\{-\frac{1}{2}\bigg[dF(\phi)g_2+d^2F(\phi)g^2_1\bigg]\geq\frac{1}{\varepsilon^2}\right\}$$
and by the large deviation principle for $Y^\varepsilon$
$$\limsup_{\varepsilon\rightarrow 0}\varepsilon^2\log\mp\{Y^\varepsilon\in A\}\leq -\Lambda^*(A).$$
Here $\Lambda^*$ is the good rate function of $Y^\varepsilon$. It is clear that to prove inequality (\ref{ineq 1}) it suffices to prove that $\Lambda^*(A)>1$.

Recall that $$\Lambda^*(A)=\inf\left\{\frac{1}{2}|k|_{\msh_H}^2; \Phi^*(k)\in A\right\}$$
where $u=\Phi^*(k)$ is the solution to the ordinary differential euqaiton
$$du_s=\bar{\sigma}(s,u_s)dk_s+\bar{b}(0,s,u_s)ds,\quad\mathrm{with}\ u_0=0.$$
It is easy to see from (\ref{chi}), (\ref{psi}), (\ref{g1}) and (\ref{g2}) that we have explicitly
$$u=(d\Phi(\gamma)k, d^2\Phi(\gamma)k^2).$$
By our assumption H 2 and the explanation after it, there exists $\nu\in (0,1)$ such that for all $k\in\msh_H-\{0\}$ we have
$$d^2F\circ\Phi(\gamma)k^2> (-1+\nu)|k|_{\msh_H}^2,$$
or
$$|k|^2_{\msh_H}>-\frac{1}{1-\nu}(d^2F\circ\Phi(\gamma)k^2)=-\frac{1}{1-\nu}(d^2F(\phi)(d\Phi(\gamma)k)+dF(\phi)(d^2\Phi(\gamma)k^2)).$$
Therefore, if $\Phi^*(k)\in A$, we have 
$$\frac{1}{2}|k|_{\msh_H}>\frac{1}{1-\nu}>1,$$
which implies $\Lambda^*(A)>1$ and completes the proof.
\end{proof}


\begin{thebibliography}{99}

\bibitem{azencott} R. Azencott: Densit\'e des diffusions en temps petit: d\'eveloppements asymptotiques (part I), Sem. Prob. , tome 18, (1984), p.402-498.

\bibitem{Az1} R. Azencott: Formule de Taylor stochastique et
d\'eveloppements asymptotiques d'int\'egrales de Feynman. In
Azema, Yor (Eds.), S\'eminaire de probabilit\'es \textbf{XVI}, LNM
\textbf{921}, 237-284, Springer, (1982).


\bibitem{Bau} F. Baudoin: An Introduction to the Geometry of
Stochastic Flows, Imperial College Press, (2004).

\bibitem{Bau-Cou} F. Baudoin, L. Coutin: Operators associated with a stochastic differential equation driven by fractional Brownian motions. 
Stochastic Process. Appl. 117 (2007), no. 5, 550--574. 

\bibitem{baudoin-hairer} F. Baudoin, M. Hairer: A version of H\"ormander's theorem for the fractional Brownian motion.  Probab. Theory Related Fields  139  (2007),  no. 3-4, 373--395.

\bibitem{cours} F. Baudoin: Stochastic Taylor expansions and heat kernel asymptotics, To appear in ESAIMPS (2010).

\bibitem{Ben Arous}
G.  Ben Arous: M\'ethodes de Laplace et de la phase stationnaire sur l'espace de Wiener. (French) [The Laplace and stationary phase methods on Wiener space]  Stochastics  25  (1988),  no. 3, 125--153.

\bibitem{benarous2}
G. Ben Arous: D\'eveloppement asymptotique du noyau de la chaleur hypoelliptique hors du cut-locus. (French) [Asymptotic expansion of the hypoelliptic heat kernel outside of the cut-locus]  Ann. Sci. ?cole Norm. Sup. (4)  21  (1988),  no. 3, 307--331.

\bibitem{Ben2} G. Ben Arous: Flots et s\'eries de Taylor
stochastiques, Journal of Probability Theory and Related Fields,
\textbf{81}, 29-77, (1989).

\bibitem{Cast} F. Castell: Asymptotic expansion of stochastic
flows, Prob. Rel. Fields, \textbf{96}, 225-239, (1993).


\bibitem{CouQia} L. Coutin, Z. Qian: Stochastic rough path analysis and fractional Brownian motion, Probab. Theory  Relat. Fields \textbf{122}, 108-140, (2002).



\bibitem{du} L. Decreusefond and  A. S. \"Ust\"unel, Stochastic analysis of the fractional Brownian motion, Potential Analysis. \textbf{10}, 177-214,  (1998).

\bibitem{Fri-Vi1} P. Friz, N. Victoir: Euler Estimates for Rough Differential Equations, Journal of Differential Equations, Vol 244, Issue 2 (2008) 388-412

\bibitem{Friz} P. Friz, N. Victoir: Multidimensional stochastic processes as rough paths.    Theory and Applications. Cambridge Studies in Advanced Mathematics (CUP, 2009)

\bibitem{Hsu} E.P. Hsu: Stochastic Analysis on manifolds, AMS, Graduate Texts in Mathematics, Volume \textbf{38},
(2002).

\bibitem{Hu} Y. Hu and D. Nualart:  Differential equations driven by Holder continuous functions of order greater than 1/2.  In:  Stochastic Analysis and Applications, 399-413,  Abel Symp., 2, Springer, Berlin, 2007.

\bibitem{Inahama} Y. Inahama: A stochastic Taylor-like expansion in the rough path 
theory,  Preprint from Tokyo Institute of Technology, 2007

\bibitem{Inahama2} Y. Inahama: Laplace approximation for rough differential equation driven by fractional Brownian motion, Arxiv preprint, arXiv:1004.1478, (2010).

\bibitem{Kunita} H. Kunita: Asymptotic Self-Similarity and Short Time Asymptotics of Stochastic Flows, J. Math. Sci. Univ. Tokyo 
4 (1997), 595?619. 


\bibitem{Leandre2} R. L\'eandre: D\'eveloppement asymptotique de la
densit\'e d'une diffusion d\'eg\'en\'er\'ee, Forum Math.
\textbf{4}, \textbf{1}, 45-75, (1992).




\bibitem{Ly} T. Lyons: Differential Equations Driven by Rough Signals.Revista Mathem\`atica
Iberio Americana, Vol \textbf{14}, No \textbf{2}, 215 - 310, (1998).

\bibitem{Ly-Vi} T. Lyons, N. Victoir: Cubature on Wiener space, Proc. R. Soc. Lond. A
(2004) 460, 169-198.

\bibitem{Lyons} T. Lyons, M. Caruana and T. L\'{e}vy, Differential Equations Driven by Rough Paths

\bibitem{neuen} Neuenkirch A., Nourdin, I.,  R\"ossler, A., Tindel S.:
Trees and asymptotic expansions for fractional stochastic differential equations. (English, French summary)
Ann. Inst. Henri Poincar? Probab. Stat. 45 (2009), no. 1, 157--174. 

\bibitem{millet} Millet A., Sanz M.: Large deviations for rough paths of the fractional Brownian motion.  (English, French summary)
Ann. Inst. H. Poincar? Probab. Statist. 42 (2006), no. 2, 245--271. 

\bibitem{nualart} D. Nualart and B. Saussereau: Malliavin calculus for stochastic differential equations driven by a fractional Brownian. motion. Stochastic Processes and their Applications 119, 391-409, 2009.


\bibitem{rascanu}D. Nualart and A. R$\breve{a}$s\c{c}anu :  Differential equations driven by fractional Brownian motion, Collect. Math. \textbf{53}, 1 ,  55-81,(2002).

\bibitem{saus} D. Nualart and B. Saussereau: Malliavin calculus for stochastic differential equations driven by a fractional Brownian motion.  Stochastic Process. Appl. 119 (2009), no. 2, 391--409. 



\bibitem{Shigekawa} I. Shigekawa: Stochastic Analysis, AMS, Translations of Mathematical monographs Vol{\bf{ 224}}, (2004).


\bibitem{Yo} Young L.C. (1936): An inequality of the H\"{o}lder type connected
with Stieltjes integration. \textit{Acta Math.,} \textbf{67},
251-282.


\bibitem{Za} Z\"{a}hle M. (1998): Integration with respect to fractal functions
and stochastic calculus I. \textit{Prob. Theory Rel. Fields,
}\textbf{111}, 333-374.




\end{thebibliography}
\end{document}